\title{Rationality of complete intersections of two quadrics}
\author[Hassett and Tschinkel]{Brendan Hassett and Yuri Tschinkel,
with an appendix by Jean-Louis Colliot-Th\'el\`ene}
\date{April 19, 2019}
\theoremstyle{plain}
\newtheorem{cons}{Construction}
\theoremstyle{plain}
\newtheorem{prop}{Proposition}
\newtheorem{theo}[prop]{Theorem}
\newtheorem{coro}[prop]{Corollary}
\newtheorem{lemm}[prop]{Lemma}
\newtheorem{CTtheo}{Th\'{e}or\`{e}me}[section]
\newtheorem{CTlem}[CTtheo]{Lemme}
\newtheorem{CTprop}[CTtheo]{Proposition}
\theoremstyle{definition}
\newtheorem{ques}[prop]{Question}
\newtheorem{rema}[prop]{Remark}
\newtheorem{exam}[prop]{Example}
\newcommand{\C}{\mathbb C}
\newcommand{\F}{\mathbb F}
\renewcommand{\P}{\mathbb P}
\newcommand{\Q}{\mathbb Q}
\newcommand{\R}{\mathbb R}
\newcommand{\Z}{\mathbb Z}
\newcommand{\bA}{\mathbb A}
\newcommand{\bC}{\mathbb C}
\newcommand{\bF}{\mathbb F}
\newcommand{\bG}{\mathbb G}
\newcommand{\bN}{\mathbb N}
\newcommand{\bP}{\mathbb P}
\newcommand{\bR}{\mathbb R}
\newcommand{\bZ}{\mathbb Z}
\newcommand{\cC}{\mathcal C}
\newcommand{\cE}{\mathcal E}
\newcommand{\cI}{\mathcal I}
\newcommand{\cL}{\mathcal L}
\newcommand{\cM}{\mathcal M}
\newcommand{\cO}{\mathcal O}
\newcommand{\cX}{\mathcal X}
\newcommand{\cZ}{\mathcal Z}
\newcommand{\fS}{\mathfrak S}
\newcommand{\wX}{\widetilde X}
\newcommand{\hX}{\widehat  X}
\newcommand{\ra}{\rightarrow}
\newcommand{\Alb}{\operatorname{Alb}}
\newcommand{\Aut}{\operatorname{Aut}}
\newcommand{\Bl}{\operatorname{Bl}}
\newcommand{\Br}{\operatorname{Br}}
\newcommand{\CH}{\operatorname{CH}}
\newcommand{\Gal}{\operatorname{Gal}}
\newcommand{\Gr}{\operatorname{Gr}}
\newcommand{\IntJ}{\operatorname{IJ}}
\newcommand{\Jac}{\operatorname{J}}
\newcommand{\Nm}{\operatorname{Nm}}
\newcommand{\OGr}{\operatorname{OGr}}
\newcommand{\Pic}{\operatorname{Pic}}
\newcommand{\Sec}{\operatorname{Sec}}
\newcommand{\Sym}{\operatorname{Sym}}
\newcommand{\Tr}{\operatorname{Tr}}
\begin{document}

\selectlanguage{english}

\begin{abstract}
We study rationality problems for smooth complete
intersections of two quadrics. 
We focus on the three-dimensional case, 
with a view toward understanding the invariants governing
the rationality of a geometrically rational threefold over a nonclosed field.
\end{abstract} 

\maketitle

\section{Introduction}
\label{sect:intro}

Rationality problems for geometrically rational surfaces $X$ over nonclosed fields $k$ have been intensely studied and are well understood. 
Their minimal models over $k$ are either conic bundles or del Pezzo surfaces.  
In addition to the existence of $k$-rational points $X(k)$, the essential information is encoded in the Picard group $\Pic(\bar{X})$, viewed as 
a Galois-module. Rationality can be detected by analyzing the Galois orbit structure on the set of exceptional curves -- with a few exceptions, minimal surfaces $X$ cannot be rational. This allows to completely settle the question of rationality of $X$. Stable rationality remains a challenging open problem. 

In dimension 3, the minimal model program leads to conic bundles, del Pezzo fibrations, and Fano varieties. There is an extensive literature
on rationality of these varieties over algebraically closed fields, see, e.g., \cite{H-I}.
Recently, there has been decisive progress on the 
stable rationality problem, essentially settling this problem in dimension 3, with the exception of varieties birational to a cubic 
\cite{HKT-conic}, \cite{HT-fano}, \cite{krylov-okada}.
These results relied on the specialization method introduced by Voisin \cite{voisin} and developed in \cite{ct-pirutka}, 
see also \cite{Voisin-survey},  \cite{ct-surve}. 
However, little is known about rationality properties of geometrically rational threefolds over nonclosed fields such as  
finite fields or function fields of curves.

Here, we focus on the simplest geometrically rational, nontoric, example, the 
complete intersection of two quadrics 
\begin{equation}
\label{eqn:x}
X\subset \bP^5,
\end{equation}
with a view toward understanding the invariants governing
the rationality of a geometrically rational threefold over relatively
simple ground fields. 
Indeed, rationality properties of toric varieties are controlled by the Picard group, as a Galois module, similar to the case of surfaces; 
the classification of nonrational tori can be found in \cite{kun}. 
For $X$ a smooth intersection of two quadrics, we have 
$$\Pic(\bar{X})=\bZ,$$
with trivial Galois action, and the Brauer group $\Br(X)$ is  also trivial. 
As we will see, the intermediate Jacobian $\IntJ(X)$ is the Jacobian of a curve of genus $2$, over $k$.   
Similarly, there are no obstructions from the
birational rigidity viewpoint. Are there any obstructions to rationality?   
 
We find them in specializations. 
One of our main results, Theorem~\ref{theo:main}, is a proof of 
failure of stable rationality of general $X$ as in \eqref{eqn:x}.  
We use two different specializations of $X$. In Section~\ref{sect:toric} we use nonrational toric threefolds over $k$ to obtain smooth examples over $k((\tau))$ that are nonrational but admit rational points. In Section~\ref{sect:irr}, we
work over $k=\bC(t)$ and view $X$ as a fourfold over $\bC$, admitting a quadric surface bundle over $\bP^1\times \bP^1$. The failure of stable rationality of such $X$ can be proven using the techniques of \cite{HPT16};
this implies that $X$ is not stably rational over $k$.  
 
On the other hand, natural rationality constructions interact in
unexpected ways.
In Section~\ref{sect:background}, we recall the geometry of the varieties of projective subspaces on higher-dimensional intersections of two quadrics. The most classical rationality construction -- projection from lines -- is discussed in Section~\ref{sect:ratbylines}. Another useful -- and {\em a priori} distinct -- rationality construction is presented in Section~\ref{sect:FQS}. Interestingly, this forces 
the existence of a line over the ground field (Theorem~\ref{theo:line}). 
In Section~\ref{sect:cocycle} we study the connection to the intermediate Jacobian of $X$ and its twisted forms; Question~\ref{ques:CT}(ii)
 is a natural outgrowth of our examples.
Section~\ref{sect:odddegree} presents a related 
rationality result (Theorem~\ref{theo:secant}) asserting
that odd-degree curves force the existence of a line and rationality.
We then explore what happens as the corresponding cocycle associated
with the intermediate Jacobian collapses.
For example, $X$ might admit a pair of skew lines over the ground field, addressed in Section~\ref{sect:skew}. 
Given the importance of rational curves in all our constructions, we explore their geometry in Section~\ref{sect:higher}. Real pencils of quadrics have been
extensively studied; rationality in the three-dimensional case is discussed
in Section~\ref{sect:real}.

Throughout, we work over a basefield $k$ that has characteristic not equal to 
two.

\

\noindent
{\bf Acknowledgments:} 
The first author was partially supported by NSF grants 1551514 and 1701659, and the Simons Foundation;
the second author was partially supported by NSF grant 1601912.
We are grateful to J.-L.~Colliot-Th\'el\`ene
for conversations on this topic and helpful comments on our manuscript.
Olivier Wittenberg provided constructive suggestions on Section~\ref{sect:real}.

\section{Geometric background}
\label{sect:background}

In this section, $X \subset \bP^n$ is a smooth complete intersection
of two quadrics over $k$.

\subsection{Varieties of linear subspaces}

Let $F_r(X) \subset \bG(r,n)$ denote the variety of $r$-dimensional
linear subspaces in $X$. It has expected dimension
$$
e(r,n) = (r+1)(n-2r-2)
$$
which is non-negative provided $n\ge 2r+2$.  

\begin{prop}
If $n\ge 2r+2$ then $F_r(X)$ is smooth and nonempty of
dimension $e(r,n)$.
If $n>2r+2$ it is also connected.
\end{prop}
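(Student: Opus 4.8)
The plan is to realize $F_r(X)$ as the zero locus of a section of a natural vector bundle on the Grassmannian. Write $\bP^n=\bP(V)$ with $V=k^{n+1}$, put $G=\bG(r,n)=\Gr(r+1,V)$, and let $\cS\subset V\otimes\cO_G$ be the rank-$(r+1)$ tautological subbundle. If $X=Q_0\cap Q_1$ with defining symmetric forms $q_0,q_1\in\Sym^2 V^\vee$, then restricting the $q_i$ to the varying subspace defines a global section $s$ of
\[
\cE:=\big(\Sym^2\cS^\vee\big)^{\oplus 2},\qquad s([W])=\big(q_0|_W,\,q_1|_W\big),
\]
whose zero locus is precisely $F_r(X)$. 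As $\operatorname{rk}\cE=2\binom{r+2}{2}=(r+1)(r+2)$ and $\dim G=(r+1)(n-r)$, the expected dimension of $Z(s)$ is $(r+1)(n-r)-(r+1)(r+2)=e(r,n)$, matching the asserted dimension.

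The first task is smoothness together with the dimension count. At $[W]\in F_r(X)$ the tangent space to $G$ is $\Hom(W,V/W)$, and a short computation identifies the differential of $s$ as
\[
(ds)_{[W]}(\phi)\,(w,w')=\big(q_i(\widetilde\phi w,w')+q_i(w,\widetilde\phi w')\big)_{i=0,1},
\]
where $\widetilde\phi\in\Hom(W,V)$ lifts $\phi$ (this is well defined since $q_i|_W=0$). I would then check that $(ds)_{[W]}$ is surjective if and only if, for every nonzero $w\in W$, the polars $q_0(w,\cdot),q_1(w,\cdot)\in V^\vee$ are linearly independent — equivalently, if and only if $X$ is smooth along $\Lambda=\bP(W)$. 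Thus smoothness of $X$ makes $s$ transverse to the zero section, so $F_r(X)$ is smooth of pure dimension $e(r,n)$ wherever it is nonempty, and $s$ is a regular section.

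For nonemptiness when $n\ge 2r+2$ (so that $e(r,n)\ge 0$), it suffices to show that $s$ cannot be nowhere vanishing, and for this that the top Chern class $c_{\operatorname{rk}\cE}(\cE)\in\CH(G)$ is nonzero, since a bundle carrying a nowhere-vanishing section has vanishing top Chern class. Writing $x_0,\dots,x_r$ for the Chern roots of $\cS^\vee$, one has $c_{\operatorname{rk}\cE}(\cE)=\big(\prod_{0\le a\le b\le r}(x_a+x_b)\big)^2$, and a Schubert-positivity computation shows this class is nonzero as soon as its degree $\operatorname{rk}\cE$ does not exceed $\dim G$, i.e. as soon as $n\ge 2r+2$. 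Hence $F_r(X)=Z(s)\ne\varnothing$, and combined with the previous step, $F_r(X)$ is smooth and nonempty of dimension $e(r,n)$. (Alternatively, nonemptiness is classical: after simultaneously diagonalizing the pencil over $\bar k$ one writes down $r$-planes on $X$ explicitly.)

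Finally, for connectedness when $n>2r+2$ (so $\dim G>\operatorname{rk}\cE$), I would use that $s$ is a regular section: its Koszul complex
\[
0\to\bigwedge^{\operatorname{rk}\cE}\cE^\vee\to\cdots\to\bigwedge^2\cE^\vee\to\cE^\vee\to\cO_G\to\cO_{F_r(X)}\to 0
\]
resolves $\cO_{F_r(X)}$. Running the hypercohomology spectral sequence, connectedness is equivalent to $H^0(F_r(X),\cO)=k$, and this follows from the vanishing $H^j\big(G,\bigwedge^j\cE^\vee\big)=0$ for $1\le j\le\operatorname{rk}\cE$. Since $\bigwedge^j\cE^\vee$ decomposes into Schur functors of $\cS$, these groups are accessible through the Borel–Weil–Bott theorem on $G$. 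I expect this cohomological vanishing to be the main obstacle: one must verify it throughout the range $n>2r+2$ (and, consistently, it must fail at $n=2r+2$, where $F_r(X)$ is a finite set of more than one point). This is precisely where the hypothesis $n>2r+2$ enters.
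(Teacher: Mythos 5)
The paper itself offers no self-contained proof here: it simply cites Reid's thesis (Th.~2.6 for smoothness, the introductions to Chapters~3--4 for nonemptiness) and Debarre--Manivel (Th.~2.1 for connectedness). Your plan reconstructs exactly the arguments behind those citations: realize $F_r(X)$ as the zero scheme of the section $s([W])=(q_0|_W,q_1|_W)$ of $(\Sym^2\cS^\vee)^{\oplus 2}$, check transversality to get smoothness and the dimension count, use positivity of the top Chern class (or explicit planes) for nonemptiness, and prove $h^0(\cO_{F_r(X)})=k$ via the Koszul resolution and Bott-type vanishing on the Grassmannian --- the latter being precisely the technique of Debarre--Manivel. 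So in substance you have found the route the paper relies on, and every statement you assert along the way is true.

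That said, the three steps you defer are genuinely where the work lives, so let me record what each requires. (i) The transversality criterion is the actual content of Reid's Th.~2.6: the nontrivial direction is that if $(ds)_{[W]}$ fails to be surjective, then a nonzero pair $(\beta_0,\beta_1)\in(\Sym^2 W)^{\oplus 2}$ annihilates its image, i.e.\ $q_0\circ\beta_0+q_1\circ\beta_1=0$ as maps $W^\vee\to V^\vee$; one must then extract a point of $\bP(W)$ where the polars are dependent, which is done by taking a kernel vector $\xi$ of a member $\beta_0+t\beta_1$ of the pencil over $\bar k$ (so that $\beta_0\xi\propto\beta_1\xi$) and using symmetry of the $\beta_i$ to reduce to the case of no common kernel, so $w:=\beta_1\xi\neq 0$. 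Your ``short computation'' gloss hides this pencil-of-symmetric-forms analysis. (ii) ``Schubert positivity'' is not automatic: a globally generated bundle of rank at most $\dim G$ can have vanishing top Chern class (a trivial bundle does), and individual Schur terms of $\bigl(\prod_{a\le b}(x_a+x_b)\bigr)^2$ can die in the $(r+1)\times(n-r)$ rectangle since $\lambda_1$ can be as large as $2r+2>n-r$ in general. A clean fix: restrict along $\Gr(r+1,2r+3)\subset\Gr(r+1,n+1)$, where tautological bundles restrict to tautological bundles, reducing nonvanishing to the boundary case $n=2r+2$ in which $c_{\operatorname{rk}\cE}(\cE)$ is a positive multiple of the point class (degree $4$ already for $r=0$). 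Alternatively, your parenthetical classical argument --- explicit $r$-planes on the diagonalized pencil, valid since smoothness of $X$ makes the discriminant separable --- already proves nonemptiness and makes the Chern computation dispensable; this is in fact Reid's route. (iii) The vanishing $H^j(G,\bigwedge^j\cE^\vee)=0$ for $1\le j\le\operatorname{rk}\cE$ when $n>2r+2$ is true and is exactly the engine of Debarre--Manivel's proof; your consistency check at $n=2r+2$ is correct, since there $\det\cE^\vee=\cO_G(-2r-4)=\omega_G\otimes\cO_G(-1)$, so $H^{\dim G}(G,\det\cE^\vee)\simeq H^0(G,\cO_G(1))^\vee\neq 0$, matching the fact that $F_r(X)$ is then a finite set of several points.
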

The nonsingularity result is \cite[Th.~2.6]{ReidThesis} and
nonemptyness is addressed in the introductions to Chapters 3 and 4
of \cite{ReidThesis}.
The connectedness assertion is \cite[Th.~2.1]{DebMan}.

The adjunction formula \cite[p.~555]{DebMan} gives the dualizing
sheaf
$$\omega_{F_r(X)} \simeq \cO_{F_r(X)}(2r-n+3),$$
which is always non-negative provided $e(r,n)>0$. 
It is negative unless $n=2g+1$ and $r=g-1$ for some $g\in \bN$.

\subsection{Connections with hyperelliptic curves and vector bundles}
Assume that $n=2g+1$ and consider the fibration 
$$\varrho:\Bl_X(\bP^{2g+1}) \ra \bP^1$$
associated with the pencil of quadrics and the relative variety of
maximal isotropic subspaces
$$F_g(\Bl_X(\bP^{2g+1})/\bP^1) \ra \bP^1$$
which factors
$$
F_g(\Bl_X(\bP^{2g+1})/\bP^1)
\stackrel{\varpi}{\ra} C \stackrel{\gamma}{\ra} \bP^1,
$$
where $\varpi$ is smooth and $\gamma$ is a double cover
branched over $D \subset \bP^1$ with $|D|=2g+2$.

Over $\bar{k}$, $X$ may be diagonalized
$$
X=\{a_{00}x_0^2+\cdots + a_{0 2g+1} x_{2g+1}^2= 
a_{10}x_0^2+\cdots + a_{1 2g+1} x_{2g+1}^2= 0 \},
$$
and thus admits automorphisms by diagonal matrices
with entries $\pm 1$
$$(\bZ/2\bZ)^{2g+1} \subset \Aut(\bar{X}).$$
As a Galois module, this may be represented as
$$H=\left<p_1,\ldots,p_{2g+2}\right> \subset \Pic(\bar{C})/\left<g^1_2\right>,$$
where the $p_i$ are the branch points. This has the relations
$$2p_i \equiv 0, p_1+\cdots+p_{2g+2} \equiv 0.$$

\

Desale-Ramanan \cite{DR} obtain:
\begin{prop} \label{prop:DR}
Assume $k$ is algebraically closed.
\begin{itemize}
\item 
$F_{g-1}(X)$ is a torsor over the 
Jacobian of $C$.
\item
$F_{g-2}(X)$ is the moduli space of rank-two vector
bundles $\cE$ with fixed odd determinant, i.e., an isomorphism
$$\det(\cE) \simeq \cL.$$
\end{itemize}
\end{prop}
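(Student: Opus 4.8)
The plan is to work throughout over $k=\bar k$, where the factorization $F_g(\Bl_X(\bP^{2g+1})/\bP^1)\xrightarrow{\varpi}C\xrightarrow{\gamma}\bP^1$ and the smoothness and dimensions of the Fano schemes are already available. First I would record the dimension count: $e(g-1,2g+1)=g=\dim\Jac(C)$ and $e(g-2,2g+1)=3g-3=\dim\cM$, where $\cM$ denotes the moduli space of stable rank-two bundles with determinant $\cL$ of odd degree. Both assertions rest on the dictionary between isotropic subspaces of the pencil of quadrics and sheaves on the discriminant cover $C$, and the proof amounts to making this dictionary precise and checking that the resulting morphisms are isomorphisms.

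For the first bullet, fix $\Lambda\cong\bP^{g-1}$ contained in $X$. For each smooth member $Q_\lambda$ of the pencil, the induced form on $\Lambda^\perp/\Lambda$ is a nondegenerate rank-two quadratic space, so $\Lambda$ is contained in exactly two maximal isotropic $\bP^g$'s of $Q_\lambda$, one in each ruling. Letting $\lambda$ vary, these planes sweep out a curve $\{M\supset\Lambda\}$ meeting the two sheets of $C$ over each $\lambda$; properness identifies it with $C$, so every $\Lambda$ determines a section $s_\Lambda$ of $\varpi$. This assignment is injective, since over a general $\lambda$ the two planes $M_1,M_2$ lie in opposite rulings and meet precisely in $\Lambda=M_1\cap M_2$. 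I would then identify $F_{g-1}(X)$ with a connected component of the space of sections of $\varpi$, which carries a natural action of $\Jac(C)$ via the correspondence on $C\times_{\bP^1}C$ coming from the double cover. To upgrade the bijection to an isomorphism of torsors, I would compute the tangent space $T_\Lambda F_{g-1}(X)=H^0(\Lambda,N_{\Lambda/X})$ and match it canonically with $H^1(C,\cO_C)=\mathrm{Lie}\,\Jac(C)$; together with the connectedness supplied earlier this shows the orbit map is \'etale and bijective, hence an isomorphism onto a $\Jac(C)$-torsor. The absence of a canonical base point (one must choose a $(g-1)$-plane, equivalently a theta characteristic) is exactly why one obtains a torsor rather than $\Jac(C)$ itself.

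The second bullet is the substantial part, and I would approach it through the even Clifford algebra of the pencil. Viewing the two quadrics as a single quadratic form on $V\otimes\cO_{\bP^1}$ with values in $\cO_{\bP^1}(1)$, $\dim V=2g+2$, its even Clifford algebra $\cC_0$ is a sheaf of $\cO_{\bP^1}$-algebras whose center is $\gamma_*\cO_C$ and which is Azumaya over $C$, with a Brauer class $\beta\in\Br(C)$ encoding the ruling data. One-sided ideals in the Clifford algebra correspond to isotropic subspaces of the quadrics, and under the Morita equivalence between $\cC_0$-modules and $\beta$-twisted sheaves on $C$ this translates the relative family of isotropic subspaces into vector bundles on $C$. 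The point is that the $(g-2)$-planes of $X$ correspond under this dictionary to rank-two $\beta$-twisted bundles, and untwisting produces rank-two bundles $\cE$ with the prescribed odd determinant $\cL$, the odd degree being forced by the branch behavior of $\gamma$ and the nontriviality of $\beta$. Finally I would match the two moduli problems: genericity of the $(g-2)$-plane (its being a smooth point of $F_{g-2}(X)$, with no unexpected incidences) corresponds to stability of $\cE$, and comparing the universal families on either side promotes the bijection to an isomorphism $F_{g-2}(X)\cong\cM$.

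The hard part will be this last step: showing that the Clifford/Morita dictionary is an isomorphism of moduli spaces rather than a mere bijection on points. This requires (i) tracking the twist and the determinant/Brauer-class bookkeeping precisely enough to pin down that the determinant is $\cL$ of odd degree, and (ii) proving that stability of $\cE$ corresponds exactly to the geometric condition characterizing genuine $(g-2)$-planes. A useful cross-check, and an alternative route to (ii), is the Hecke correspondence: a $(g-2)$-plane $\Pi$ cuts out the curve $\{\Lambda\in F_{g-1}(X):\Pi\subset\Lambda\}$ inside the $\Jac(C)$-torsor of the first bullet, and the geometry of these Hecke curves is what Narasimhan--Ramanan use to recover the bundle together with its stability.
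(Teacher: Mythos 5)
First, note that the paper itself contains no proof of this proposition: it is stated as a quotation of Desale--Ramanan \cite{DR} (with the first bullet also in \cite[Th.~4.8]{ReidThesis}), so your proposal must be measured against their arguments. Your first bullet follows the classical line correctly: the rank-two space $\Lambda^{\perp}/\Lambda$ gives two maximal isotropics through $\Lambda$, one in each ruling, hence a section of $\varpi$ over $C$. The soft spot is that the torsor structure itself is the crux, and you leave it undefined: a space of sections of $\varpi$ carries no action of $\Jac(C)$ ``via the correspondence on $C\times_{\bP^1}C$'' for free. In \cite{DR} and \cite{ReidThesis} the action is constructed explicitly (by elementary modifications of the family of isotropic subspaces along points of $C$, equivalently by the divisor class measuring excess incidence of two sections), and only once that is in hand does your \'etale-plus-bijective argument make sense. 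As stated, your tangent-space matching shows the two spaces have the same dimension, not that an orbit map exists. (Also, the parenthetical that a base point is ``equivalently a theta characteristic'' is not right; a base point is just a choice of $(g-1)$-plane.)

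For the second bullet there is a concrete error and a genuine gap. Over $k=\bar{k}$ one has $\Br(C)=0$ by Tsen's theorem, so the Brauer class $\beta$ of the even Clifford algebra on $C$ is \emph{trivial}; it therefore cannot ``force'' the odd parity of $\deg(\cE)$. The parity is well defined (since the $\bP^1$-bundle determines $\cE$ only up to twist by line bundles) but must be computed, e.g., from the local structure over the branch points of $\gamma$ or on an explicit example. More seriously, the Morita dictionary you invoke -- one-sided ideals of the Clifford algebra corresponding to isotropic subspaces -- is formal only for \emph{maximal} isotropics (spinor modules); for $(g-2)$-planes, producing a rank-two sheaf on $C$ is precisely the content of the theorem, not a consequence of Morita theory, so your plan assumes what is to be proved. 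The route of \cite{DR} is direct and bypasses this: for a $(g-2)$-plane $\Pi$, the space $\Pi^{\perp}/\Pi$ is a nondegenerate rank-four quadratic space for each smooth member of the pencil, and the maximal isotropics containing $\Pi$ in a fixed ruling form a $\bP^1$; letting the ruling vary over $C$ gives an \'etale $\bP^1$-bundle over $C$, which by triviality of $\Br(C)$ equals $\bP(\cE)$, with stability of $\cE$ and the parity of its determinant then checked by hand. This is exactly the mechanism the present paper uses in its $g=2$ discussion in Section~\ref{subsect:GA}, where $F_1(X'/\bP^1)=\bP(\cE)$ and $\deg(\cE)$ is normalized to $5$. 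So while your overall architecture (sections of $\varpi$ for $F_{g-1}$, rank-two sheaves on $C$ for $F_{g-2}$) points in the right direction, the proposal as written does not close either step.
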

The generic such bundle admits automorphisms by $\pm 1$ and an action
$$\cE \mapsto \cE \otimes \cM$$
where $\cM^{\otimes 2} \simeq \cO_C$.

\subsection{Quadric pencils} \label{subsect:QP}
Consider the fibration 
$$\varrho: P:=\Bl_X(\bP^n) \ra \bP^1$$
associated with the pencil of quadrics.
\begin{prop}[Brumer Theorem \cite{Brumer}]
The variety $X$ admits a rational point if and only if $\varrho$ admits a section.
\end{prop}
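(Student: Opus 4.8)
The plan is to prove both implications by relating a section of $\varrho$ to the base locus $X$ through the geometry of the blow-up $P=\Bl_X(\bP^n)$. Write $\pi\colon P\to\bP^n$ for the blow-down, $H=\pi^*\cO_{\bP^n}(1)$, and $E$ for the exceptional divisor over $X$. Since $\varrho$ is the resolution of the rational map $[q_0:q_1]\colon\bP^n\dashrightarrow\bP^1$ defined by the two quadratic forms $q_0,q_1$ cutting out $X$, and these forms vanish to order one along $X$, the two pulled-back sections $\pi^*q_i$ vanish once along $E$, so $\varrho^*\cO_{\bP^1}(1)\simeq 2H-E$.

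For the easy direction, suppose $x\in X(k)$. Because $X$ is a smooth complete intersection of two quadrics, the fibre $\pi^{-1}(x)\simeq \bP(N_{X/\bP^n,x})\simeq\bP^1$, and transversality means the differentials $(dq_0,dq_1)$ identify the normal space at $x$ with $k^2$. Hence $\varrho$ restricts to an isomorphism $\pi^{-1}(x)\xrightarrow{\sim}\bP^1$, whose inverse is a section of $\varrho$ defined over $k$.

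Conversely, a section $s\colon\bP^1\to P$ with image $\Gamma$ gives, after composing with $\pi$, a morphism $\phi=\pi\circ s\colon\bP^1\to\bP^n$ with $\phi(\lambda)\in Q_\lambda$ for every $\lambda$; equivalently, homogeneous forms $f_0,\dots,f_n$ without common factor, of some degree $d\ge 0$, satisfying $\lambda_0 q_0(f)+\lambda_1 q_1(f)\equiv 0$. Intersecting $\Gamma$ against $\varrho^*\cO_{\bP^1}(1)\simeq 2H-E$, and using that $\varrho|_\Gamma$ is an isomorphism (so $\Gamma\cdot\varrho^*\cO_{\bP^1}(1)=1$) together with $\Gamma\cdot H=d$, yields $\Gamma\cdot E=2d-1$. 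Thus $\phi$ meets $X$ in a zero-cycle of \emph{odd} degree $2d-1$. When $d=0$ the form $\phi$ is constant and $\Gamma$ is the fibre of $\pi$ over a point of $X(k)$, so we are done; when $d=1$ the cycle is a single point cut out by a linear form on $\bP^1$, hence $k$-rational, and $\phi$ of that point lies in $X(k)$. More generally we are finished as soon as the odd-degree cycle has a degree-one component.

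The remaining case, and the main obstacle, is to descend from a section of degree $d\ge 2$ (equivalently, from an odd-degree closed point of $X$) to an honest $k$-point. I would argue by descent on $d$. Evaluating the forms above at $[1:0]$ and $[0:1]$ already produces $k$-points $f(1,0)\in Q_0(k)$ and $f(0,1)\in Q_1(k)$ on individual members of the pencil; I would use such a rational point on a pencil member---projecting from it, or reflecting the rational curve $\phi(\bP^1)$ off the corresponding quadric---to produce a section of strictly smaller degree, iterating down to the case $d=0$ treated above. Alternatively, one invokes a Springer-type theorem to descend the odd-degree closed point of $X$ to a rational one. The delicate point is to control the degree drop while remaining within the family of sections of $\varrho$, since projection from a point of a single quadric does not obviously preserve the structure of an intersection of two quadrics; this is exactly where the argument requires care.
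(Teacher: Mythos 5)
Your forward direction is correct, and the intersection computation is a nice touch: from $\varrho^*\cO_{\bP^1}(1)\simeq 2H-E$ and $\Gamma\cdot H=d$ you correctly get $\Gamma\cdot E=2d-1$, so a section forces a zero-cycle of odd degree on $X$. But the proof stops exactly where Brumer's theorem begins. The descent from a degree-$d$ section -- equivalently, from a primitive polynomial solution $f(t)\in k[t]^{n+1}$ of $q_0(f)+t\,q_1(f)=0$, which is what a $k(t)$-point of the generic fibre of $\varrho$ amounts to -- down to a $k$-point of $X$ is the entire content of the statement, and neither of your proposed routes closes it. In particular, the appeal to ``a Springer-type theorem'' for intersections of two quadrics is circular: the odd-degree descent for such intersections (Th\'eor\`eme \ref{deuxquadriques} of the appendix) is itself proved by observing that $q_0+tq_1$ becomes isotropic over the odd-degree extension $k(P)(t)\supset k(t)$, applying Springer's theorem for a single quadratic form to descend to $k(t)$, and then invoking the Amer--Brumer theorem (Th\'eor\`eme \ref{theoamer}) -- precisely the proposition you are trying to prove -- to get a common zero over $k$. (The paper itself gives no proof; it cites \cite{Brumer}.)

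What is missing is Brumer's minimal-degree reflection argument, which your sketch gestures at (``reflecting the rational curve off the corresponding quadric'') but does not execute. Take a nonzero polynomial solution $f(t)=f_0+f_1t+\cdots+f_dt^d$ of $q_0(f)+t\,q_1(f)=0$ of minimal degree $d\ge 1$, with leading vector $c=f_d\neq 0$, and let $b_0,b_1$ be the bilinear forms of $q_0,q_1$. Comparing coefficients of $t^{2d+1}$ and $t^{2d}$ gives $q_1(c)=0$ and $q_0(c)+2b_1(f_{d-1},c)=0$. If $q_0(c)=0$, then $c$ is a common zero and you are done. Otherwise set $\phi_t=q_0+tq_1$, so $\phi_t(c)=q_0(c)\in k^{\times}$, and reflect: $f':=\phi_t(c)\,f-2\,b_{\phi_t}(f,c)\,c$. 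One checks $\phi_t(f')=0$; the relation $q_1(c)=0$ kills the coefficient of $t^{d+1}$ in $f'$, and the relation $q_0(c)+2b_1(f_{d-1},c)=0$ kills the coefficient of $t^d$, so $\deg f'\le d-1$; finally $f'\neq 0$, since $f'=0$ would force $f=h(t)c$ and then $0=\phi_t(f)=h^2q_0(c)$, contradicting $q_0(c)\neq 0$. This contradicts minimality, so $d=0$, the case you already handled. Your alternative of projecting from $f(1,0)\in Q_0(k)$ does not obviously lower $d$ while staying among sections of $\varrho$ -- as you yourself concede -- so without the reflection computation the converse direction remains unproven.
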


Here we assume $n=2g+1$ and write $\OGr_{g+1}$ for one
of the two isomorphic connected components
of the Grassmannian of maximal isotropic subspaces in a nondegenerate
quadratic form in $2g+2$ variables.
The relative variety of
planes
$$F_g(P/\bP^1) \ra \bP^1$$
factors
$$
F_g(P/\bP^1) \stackrel{\varpi}{\ra} C \stackrel{\gamma}{\ra} \bP^1,
$$
where $\varpi$ is an \'etale $\OGr_g$-bundle and $\gamma$ is a double cover.
The standard theory of quadratic forms yields a class $\alpha \in \Br(C)[2]$.

Now assume $n=5$ and $g=2$.
Here $\OGr_3$ is a Brauer-Severi threefold, geometrically
isomorphic to $\bP^3$. Thus 
$$F_2(P/\bP^1) \stackrel{\varpi}{\ra} C $$
is an \'etale $\bP^3$-bundle, hence the index of $\alpha$ divides four.
Intersection with $X$ gives an isomorphism
$$F_2(P/\bP^1) \stackrel{\sim}{\ra} R_2(X)$$
to the variety $R_2(X)$ of conics on $X$. 

\begin{rema}
It follows that if $X$ contains a conic over $k$ then $C(k) \neq \emptyset$.
The converse holds if  $\alpha=0$ or
$k$ is a $C_1$-field.
\end{rema}

\section{Rationality via lines}
\label{sect:ratbylines}
We recall a standard rationality construction:
Let $X\subset \bP^n$ be a smooth complete intersection of two quadrics containing a line $\ell$. 
Projecting from $\ell$ induces a birational morphism
$$\beta:\Bl_{\ell}(X) \ra \bP^{n-2}$$
that blows down all the lines in $X$ incident to $\ell$. Fix coordinates $x_0,\ldots,x_n$ so that
$\ell=\{x_2=\ldots=x_n=0\}$ and 
$$X=\{L_{00}x_0+L_{01}x_1+Q_0 = L_{10}x_0+L_{11}x_1+ Q_1=0\},
$$
where the $L_{ij}$ are linear and the $Q_i$ quadratic in $x_2,\ldots,x_n$.
The inverse mapping $\beta^{-1}$ is given by a 
linear series of cubics associated with the $2\times 2$ minors of a matrix
$$\left( \begin{matrix} L_{00} & L_{01} \\
				  L_{10} & L_{11} \\
				  Q_0 & Q_1 
	\end{matrix} \right).
$$
When $n=5$ we recover the following data:
\begin{itemize}
\item{the base locus of $\beta^{-1}$, a smooth quintic curve of genus two
$$C \subset \{L_{00}L_{11}-L_{01}L_{10}=0 \} \subset \bP^3;$$}
\item{a divisor on $C$ of degree three, corresponding to 
one of the rulings of the quadric surface.}
\end{itemize}
This is consistent with our previous notation as the space of conics $R_2(X)$ 
admits a natural morphism to $C$ with rational fibers. Indeed, conics in $X$ correspond under $\beta$
to conics $R \subset \bP^3$ incident to $C$ in four points; we take the residual point in the intersection 
of $C\cap \operatorname{span}(R)$. 

\begin{rema}
This yields a geometric explanation for the first assertion of Theorem 28 of \cite{BGW}, in the genus two case.
\end{rema}

We summarize this classical construction:
\begin{cons} (see \cite[Prop.~2.2]{CTSSDI}) \label{cons:line}
Let $X$ be a smooth complete intersection of two quadrics.
Suppose that $F_1(X)$ admits a $k$-rational point $\ell$.
Then $X$ is rational over $k$.
\end{cons}
From this, we obtain
\begin{itemize}
\item{over $k=\bC$, $X$ is rational provided $n\ge 4$;}
\item{over $k=\bF_q$, $X$ is rational provided $n\ge 5$;}
\item{over $k=\bC(B)$, where $B$ is a curve, $X$ is rational
provided $n \ge 6$.}
\end{itemize}
Most of this is contained in \cite[Th.~3.3,3.4]{CTSSDI}, with the
exception of the case of finite fields with $n=5$. 
In this case, $F_1(X)$ is a principal homogeneous space over the
Jacobian of a genus two curve (by Prop.~\ref{prop:DR} or \cite[Th.~4.8]{ReidThesis}) and
thus admits $k$-rational points by Lang's Theorem \cite{LangFF}.

\section{Rationality via points and quadric surface fibrations}
\label{sect:FQS}

\subsection{Double projection}
Let $X\subset \bP^5$ be a smooth complete intersection of two 
quadrics.
\begin{cons} \label{cons:point} \cite[\S 3]{CTSSDI}
For each $x\in X(k)$,
double projection from $x$ induces a quadric surface bundle
$$q:X' \ra \bP^1$$
with six degenerate geometrically integral fibers. 
The relative variety of lines factors
$$F_1(X'/\bP^1) \stackrel{\phi}{\ra} C_x \ra \bP^1$$
where $\phi$ is an \'etale $\bP^1$ bundle, classified by
$\alpha_x \in \Br(C_x)[2]$.
\end{cons}

\begin{prop} \label{prop:diagram}
We have a natural diagram 
$$\xymatrix{
F_1(X'/\bP^1) \ar@{^{(}->}[r] \ar[d]  & F_2(P/\bP^1) \ar[d] \\
C_x 	      \ar@{=}[r] \ar[d]	&       C   \ar[d]  \\
\bP^1         \ar@{=}[r] 	&      \bP^1
}
$$
giving a linear embedding of an \'etale $\bP^1$-bundle into an
\'etale $\bP^3$-bundle.
\end{prop}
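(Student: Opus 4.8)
The plan is to identify the conics on $X$ passing through $x$ with the lines contained in the fibers of the quadric surface bundle $q\colon X' \to \bP^1$ of Construction~\ref{cons:point}, and then to read off both the identification $C_x = C$ and the asserted linear embedding from the description of $F_2(P/\bP^1)$ as the relative variety of planes. First I would make the double projection explicit in coordinates centered at $x=[1:0:\cdots:0]$, writing the two quadrics as $x_0\ell_i + q_i$ with $\ell_i,q_i \in k[x_1,\dots,x_5]$, and trace how the birational map $X \dashrightarrow X'$ transforms curve classes. The point to establish is that the lines in the quadric surface fibers of $q$ correspond exactly to the conics on $X$ incident to $x$. A dimension count corroborates the bookkeeping: $R_2(X)\cong F_2(P/\bP^1)$ is a $\bP^3$-bundle over the genus-two curve $C$ and so has dimension four, the incidence condition $x\in R$ cuts out codimension two, and $F_1(X'/\bP^1)$ is a surface fibered in $\bP^1$'s over $C_x$.

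This correspondence also pins down the base identifications. Both $C$ and $C_x$ are the double covers of the pencil $\bP^1$ that separate the two rulings, of the planes in $Q_t$ on one side and of the lines in the quadric surface fiber on the other. Since the correspondence between conics and lines is compatible with the maps down to $\bP^1$, it forces the base $\bP^1$ of $q$ to be the pencil itself, identifies $C_x$ with $C$, and matches the six degenerate fibers of $q$ with the six singular members of the pencil. I would then invoke the isomorphism $F_2(P/\bP^1)\xrightarrow{\sim} R_2(X)$ to translate the incidence condition cleanly: a conic $R=\Pi\cap Q_{t'}$ passes through $x$ if and only if the plane $\Pi\subset Q_t$ contains $x$, because $x\in X$ already lies on both quadrics of the pencil.

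The key geometric input is that, inside a fixed ruling $\OGr_3\cong\bP^3$, the locus of planes through the point $x\in Q_t$ is a \emph{line}. This I would verify by intersecting with the tangent cone $T_xQ_t\cap Q_t$, a cone with vertex $x$ over a smooth quadric surface in $\bP^3$: the planes of $Q_t$ through $x$ correspond to the lines of this quadric surface, which form two pencils, one in each ruling, and in spinor coordinates on $\OGr_3$ such a pencil is cut out by linear equations. Assembling these pencils as $t$ varies over $C$ yields a sub-$\bP^1$-bundle of the $\bP^3$-bundle $\varpi\colon F_2(P/\bP^1)\to C$, fiberwise linearly embedded, and this is precisely $\phi\colon F_1(X'/\bP^1)\to C_x$; stacking the two equalities of base curves and $\bP^1$'s over $\bP^1$ produces the commutative diagram.

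The main obstacle I anticipate is the fiberwise linearity together with its uniformity in the family. One must check not merely that the planes through $x$ form a $\bP^1$ abstractly, but that this is a linear $\bP^1\subset\bP^3$ for the spinor embedding, and that this persists uniformly as $t$ ranges over $C$, including over the six branch points where $Q_t$ degenerates and the smooth-quadric description above breaks down. Controlling the correspondence over these degenerate fibers, and confirming that the embedded bundle remains the étale $\bP^1$-bundle $\phi$ there, is the delicate part; away from the branch locus everything is governed by the smooth quadric geometry.
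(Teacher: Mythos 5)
Your proposal is correct and follows essentially the same route as the paper: the paper likewise identifies the two base $\bP^1$'s via the tangent hyperplanes $T_xQ_p$ to the quadrics of the pencil (i.e., $\bP(T_x\bP^5/T_xX)=\bP(\Gamma(\cI_X(2)))$), realizes $\bP^4_p\cap Q_p$ as a cone with vertex $x$ over $q^{-1}(p)$ so that $F_1(q^{-1}(p))\hookrightarrow F_2(Q_p)$, and gets $C_x=C$ from functoriality of Stein factorization (your rulings argument is the same mechanism). The only real difference is how fiberwise linearity is settled — the paper does a degree computation, whereas you invoke the linearity of the incidence condition in spinor coordinates on $\OGr_3\simeq\bP^3$ — and both are valid; the degeneration over the six branch points that worries you is already absorbed into the étale-bundle statements of Section 2.3 and Construction 4, so it needs no separate treatment here.
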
 
The arrows are induced as follows:
\begin{itemize}
\item{for the bottow row
$$\bP^1 = \bP(T_x\bP^5/T_xX) = \bP(\Gamma(\cI_X(2)) = \bP^1$$
by taking tangent spaces to the quadric hypersurfaces $\{Q_p\}$ in the
pencil;}
\item{for the top row, given $p\in \bP^1$ the threefold
$$\bP^4_p \cap Q_p$$
is a cone over the surface $q^{-1}(p)$ whence
$$F_1(q^{-1}(p)) \hookrightarrow  F_2(\bP^4 \cap Q_p) 
\hookrightarrow F_2(Q_p);$$}
\item{the middle row is induced by the functoriality of 
Stein factorization.}
\end{itemize}
A degree computation shows that the $\bP^1$-fibers are linearly embedded
in the $\bP^3$-fibers.

Proposition~\ref{prop:diagram} yields the compatibility of 
the pairs $(C_x,\alpha_x)$ with the pair $(C,\alpha)$
introduced in Section~\ref{subsect:QP}:
\begin{coro}
The curve $C_x$ and Brauer class $\alpha_x$ are independent of $x$.
The index of $\alpha$ equals two whenever $X$ admits a rational point.
\end{coro}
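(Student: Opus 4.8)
The plan is to extract the two assertions of the Corollary directly from the diagram in Proposition~\ref{prop:diagram}. The corollary states that $(C_x,\alpha_x)$ is independent of $x$ and that $\ind(\alpha)=2$ whenever $X(k)\neq\emptyset$. Both should follow by comparing the data attached to the double projection from a point with the intrinsic data $(C,\alpha)$ of the quadric pencil from Section~\ref{subsect:QP}.

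\begin{proof}[Proof proposal]
First I would address independence of $x$. The middle row of the diagram in Proposition~\ref{prop:diagram} is an equality $C_x = C$, induced by functoriality of Stein factorization, and the bottom row is the canonical identification $\bP^1 = \bP(\Gamma(\cI_X(2)))$ of the pencil of quadrics, which plainly does not depend on the choice of $x$. Since the double cover $\gamma\colon C \ra \bP^1$ is intrinsically attached to the pencil and is recovered as this middle term, the isomorphism class of $C_x$ over $k$ is forced to coincide with that of $C$ for every $x\in X(k)$, hence is independent of $x$. For the Brauer class, the top row exhibits the \'etale $\bP^1$-bundle $F_1(X'/\bP^1)$ as a linearly embedded sub-bundle of the \'etale $\bP^3$-bundle $F_2(P/\bP^1)$ over $C$. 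I would argue that the class $\alpha_x \in \Br(C)[2]$ classifying the $\bP^1$-bundle is the restriction (pullback) to $\bP^1$ of the class $\alpha$ classifying the $\bP^3$-bundle; since a projective sub-bundle of a Brauer--Severi variety carries the same Brauer class as the ambient one, $\alpha_x$ is the image of the fixed intrinsic class $\alpha$, and therefore also independent of $x$.

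For the second assertion, suppose $X$ admits a rational point $x$. By Construction~\ref{cons:point} the double projection produces an honest quadric \emph{surface} bundle $q\colon X' \ra \bP^1$, whose relative variety of lines $F_1(X'/\bP^1)$ is an \'etale $\bP^1$-bundle over $C$. An \'etale $\bP^1$-bundle is a Brauer--Severi scheme of relative dimension one, so the class $\alpha_x$ it defines has index dividing two. Combining this with the identification $\alpha_x = \alpha$ from the first part, we conclude that the index of $\alpha$ divides two. On the other hand, $\alpha$ is a nontrivial element of $\Br(C)[2]$ in general position, so its index is exactly two; more carefully, one already knows from Section~\ref{subsect:QP} that $\ind(\alpha) \mid 4$, and the present refinement cuts this down to $\ind(\alpha)=2$ precisely when a rational point exists.

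The main obstacle will be the compatibility of the Brauer classes, i.e.\ verifying that the class of the linearly embedded $\bP^1$-sub-bundle really equals the restriction of the class $\alpha$ of the ambient $\bP^3$-bundle rather than some unrelated twist. This requires the functoriality of the Brauer class under passage to a projective sub-bundle, together with care that the \'etale structure group reductions in Construction~\ref{cons:point} and in Proposition~\ref{prop:diagram} are compatible; the degree computation quoted after Proposition~\ref{prop:diagram}, guaranteeing a \emph{linear} embedding of fibers, is exactly what makes this pullback statement hold. Once that identification is in hand, independence of $x$ is immediate because $(C,\alpha)$ was defined without reference to any point, and the index bound follows formally from the relative dimension of the Brauer--Severi bundle.
\end{proof}
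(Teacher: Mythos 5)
Your proposal is correct and follows essentially the same route as the paper: the paper derives the corollary directly from Proposition~\ref{prop:diagram}, with the identification $C_x=C$ coming from the middle row and the equality $\alpha_x=\alpha$ coming from the linear embedding of the \'etale $\bP^1$-bundle into the \'etale $\bP^3$-bundle (a linearly embedded Brauer--Severi sub-bundle has the same Brauer class), after which the existence of the $\bP^1$-bundle for any $x\in X(k)$ bounds the index of $\alpha$ by two. Your only wobble --- the aside that $\alpha$ is ``nontrivial in general position'' --- is unnecessary and should be dropped in favor of your own more careful reading that a rational point refines $\ind(\alpha)\mid 4$ to $\ind(\alpha)\mid 2$, which is exactly the content intended by the paper's statement.
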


\begin{cons} 
Retain the notation of Construction~\ref{cons:point}.
Note that $X'$ is rational when $\alpha=0$.
\end{cons}
As a consequence, we obtain that $X$ is rational \cite[Th.~3.4]{CTSSDI}
\begin{enumerate}
\item{over $k$ a $C_i$-field when $n\ge 2^{i+1}+2$;}
\item{over $k$ a $p$-adic field when $n\ge 11$.}
\end{enumerate}

\subsection{Geometric analysis} 
\label{subsect:GA}
We first elaborate the geometric implications of our rationality construction.

Let $X\subset \bP^5$ denote a smooth complete intersection of two quadrics
over $k$ and suppose 
there exists an $x\in X(k)$ such that there exist distinct lines
$$x \in \ell_1,\ell_2,\ell_3,\ell_4 \subset \bar{X}=X_{\bar{k}}$$
with $N_{\ell_i/\bar{X}} \simeq \cO_{\bP^1}^2$.
Double projection from $x$ induces a rational map
$$X \dashrightarrow \bP^1$$
defined away from the lines. We resolve indeterminacy
$$\begin{array}{rcccc}
	&		& \wX &  	 &    \\
        & \swarrow	&     & \searrow &    \\
   X    &               &     &          & X' \\
        &		&     &          & \downarrow {\scriptstyle q} \\
        &               &     &          & \bP^1
\end{array}
$$
where $\wX \ra X$ is obtained by blowing up $x$ and the proper transforms of 
$\ell_1,\ldots,\ell_4$, and $\wX \ra X'$ blows down the exceptional divisors
over the lines along the opposite rulings. The morphism $X'\ra \bP^1$ is a quadric
surface bundle. Let $Y'\subset X'$ denote the
proper transform of the exceptional divisor over $x$, a conic bundle over $\bP^1$
with three degenerate fibers.

Suppose that the class $\alpha \in \Br(C)[2]=0$ so that 
$$\phi: F_1(X'/\bP^1) \ra C$$
admits a section. Then we may express
$$F_1(X'/\bP^1) = \bP(\cE)$$
where $\cE$ is a vector bundle of rank two and odd determinant \cite{DR}. Note that
it follows immediately that $[\Pic^1(C)]=0$ as a principal homogeneous space under 
$A=\Pic^0(C)$.  Thus we may normalize so that $\deg(\cE)=5$. 

The structure of sections of $\phi$ is well-known: For each $\cL \in \Pic^0(C)$, elements of
$$\bP(\Gamma(C,\cE\otimes \cL))$$
yield sections of the projective bundle. Riemann-Roch shows these are parametrized by
a Zariski $\bP^2$-bundle over $\Pic^0(C)$. Their proper transforms in $X$ are twisted
cubic curves containing $x$; these are birational to a Zariski $\bP^2$-bundle over $F_1(X)$
by:
\begin{prop} \label{prop:twistedcubic}
Twisted cubic curves on $X$ are residual to a line
$\ell \subset X$ in the three-plane they span.
Thus the twisted cubics are birational to a Zariski $\Gr(2,4)$-bundle over $F_1(X)$.
Those passing through a fixed point form a Zariski $\bP^2$-bundle. 
\end{prop}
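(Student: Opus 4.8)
The plan is to recognize the twisted-cubic/line correspondence as the classical liaison linking a twisted cubic to a line by a complete intersection of two quadric surfaces in $\bP^3$. First I would fix a twisted cubic $T\subset X$ and let $\Pi:=\langle T\rangle\cong\bP^3$ be the three-plane it spans. Since every quadric in the defining pencil has rank at least five, its maximal linear subspaces are planes, so none of them contains $\Pi$; hence $Q_0\cap\Pi$ and $Q_1\cap\Pi$ are honest quadric surfaces in $\Pi$, each passing through $T$. For $T$ general these two surfaces share no component, so $X\cap\Pi=(Q_0\cap\Pi)\cap(Q_1\cap\Pi)$ is a complete intersection curve of type $(2,2)$, that is, a Cohen--Macaulay curve of degree $4$ and arithmetic genus $1$ with no embedded points. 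As a twisted cubic is not a complete intersection, $T\subsetneq X\cap\Pi$, and the residual curve has degree $4-3=1$: it is a line $\ell\subset X\cap\Pi\subset X$, meeting $T$ in two points so that $T\cup\ell$ has arithmetic genus one. This proves the first assertion; equivalently, $X\cap\Pi$ links $T$ to $\ell$.

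Next I would reverse the construction to identify the fibers of $T\mapsto\ell$. A line in $\bP^5$ corresponds to a two-dimensional subspace $V_\ell\subset k^6$, and a three-plane $\Pi\supset\ell$ to a four-dimensional $W\supset V_\ell$, equivalently to a two-dimensional subspace $W/V_\ell$ of the rank-four quotient $k^6/V_\ell$; these form a single $\Gr(2,4)$. This globalizes over $F_1(X)$: writing $\cS\subset\cO^6$ for the tautological rank-two subbundle restricted from $\bG(1,5)$, the relative Grassmannian of rank-two subbundles of $\cO^6/\cS$ is a Zariski-locally trivial $\Gr(2,4)$-bundle over $F_1(X)$. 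Conversely, given $\ell\subset X$ and a general $\Pi\supset\ell$, the surfaces $Q_0\cap\Pi$ and $Q_1\cap\Pi$ both contain $\ell$, and their complete intersection links $\ell$ to a smooth twisted cubic $T=\overline{(X\cap\Pi)\setminus\ell}$ spanning $\Pi$. The assignments $T\mapsto(\ell,\Pi)$ and $(\ell,\Pi)\mapsto T$ are mutually inverse over dense open subsets, yielding the asserted birational equivalence.

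For the cubics through a fixed $x\in X$, observe that $T\subset\Pi$ forces $x\in\Pi$ whenever $x\in T$; for $\ell$ avoiding $x$ (a dense open of $F_1(X)$) this requires $\Pi\supset\langle\ell,x\rangle\cong\bP^2$, and conversely any such $\Pi$ meets $X$ in $\ell\cup T$ with $x\in(X\cap\Pi)\setminus\ell=T$. The three-planes containing a fixed $\bP^2\subset\bP^5$ form a $\bP(k^6/k^3)\cong\bP^2$. Globalizing over $F_1(X)$ through $\bP(\cO^6/\langle\cS,x\rangle)$ --- where $\langle\cS,x\rangle$ is the rank-three subbundle spanned by $\cS$ and the constant section $x$, locally free away from the locus of lines through $x$ --- exhibits these cubics as a Zariski $\bP^2$-bundle over $F_1(X)$.

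The principal obstacle is the genericity hidden in the reverse direction: one must show that for a general three-plane through a general line $\ell\subset X$ the residual degree-three curve is a genuine twisted cubic --- irreducible, reduced, and nondegenerate --- rather than a degenerate configuration such as a plane cubic or a conic-plus-line. Because these are open conditions, it suffices to produce a single pair $(\ell,\Pi)$ with smooth twisted-cubic residual and then invoke the irreducibility of $F_1(X)$ and of the $\Gr(2,4)$-bundle to propagate it; pinning down this explicit base case, and verifying that the two constructions are genuinely inverse over dense opens, is where the real work lies.
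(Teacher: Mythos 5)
Your liaison argument is precisely the one the paper has in mind: in fact the paper states this proposition without any proof, treating the linkage as classical, and the same fact resurfaces inside its proof of Proposition~\ref{prop:gensecant} (``if $R$ is a twisted cubic then it is residual to a line $\ell$'' in the codimension-two linear section). So your write-up supplies details the paper omits, and the structure --- link $T$ to $\ell$ by the $(2,2)$ curve $X\cap\langle T\rangle$, parametrize pairs $(\ell,\Pi)$ by the Grassmann bundle $\Gr(2,\cO^6/\cS)$ over $F_1(X)$, and cut down to $\Pi\supset\langle\ell,x\rangle$ for the fixed-point case --- matches the intended reasoning exactly.

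One refinement: your hedge ``for $T$ general these two surfaces share no component'' is unnecessary, and as written it is too weak, since the proposition asserts the residual line for \emph{every} twisted cubic on $X$. A common component of $Q_0\cap\Pi$ and $Q_1\cap\Pi$ would be a plane or a quadric surface contained in $X$, which is impossible: by the Lefschetz hyperplane theorem $\Pic(\bar X)=\bZ h$, so every surface in $X$ has degree divisible by $4$. Likewise no member of the pencil contains $\Pi$, since smoothness of $X$ (simple roots of the discriminant) forces every quadric in the pencil to have corank at most one, hence maximal isotropic subspaces of dimension two. So $X\cap\Pi$ is unconditionally a $(2,2)$ complete intersection curve of degree $4$ and arithmetic genus $1$, and the liaison applies to all $T$. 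As for the ``real work'' you flag in the reverse direction: only a dense open is needed for the birationality claims, your one-example-plus-irreducibility plan is sound ($F_1(X)$ is irreducible, being a torsor under the Jacobian of the genus-two curve $C$, so the Grassmann bundle is irreducible), and the verification of a single pair $(\ell,\Pi)$ with smooth residual cubic is a routine computation over $\bar k$ on a diagonal model. With the genericity hedge removed as above, your proof is complete and consistent with the paper.
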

In particular, $X$ admits a twisted cubic over $k$ if and only if it admits a line over $k$.

Lines in $X$ disjoint from the lines $\ell_1,\ldots,\ell_4$ correspond to sections of
$$\Gamma(C,\cE\otimes \cL), \quad \cL \in \Pic^{-1}(C);$$
Riemann-Roch shows there is a unique such section for generic $\cL$.  

We summarize this discussion as follows:
\begin{theo} \label{theo:line}
Let $X\subset \bP^5$ be a smooth complete intersection of two quadrics. 
Suppose that
\begin{itemize}
\item{$X(k)\neq \emptyset$;}
\item{the class $\alpha \in \Br(C)$ vanishes.}
\end{itemize}
Then $X$ is rational and contains a line.
\end{theo}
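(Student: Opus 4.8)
The plan is to produce a $k$-rational line on $X$ and then invoke Construction~\ref{cons:line}. Since $X(k)\neq\emptyset$, I would fix a point $x\in X(k)$ and feed it into Construction~\ref{cons:point}, obtaining the quadric surface bundle $q\colon X'\to\bP^1$ together with the relative variety of lines $\phi\colon F_1(X'/\bP^1)\to C$, an \'etale $\bP^1$-bundle whose class is $\alpha_x=\alpha\in\Br(C)[2]$. The hypothesis $\alpha=0$ means $\phi$ admits a section, so by Desale--Ramanan \cite{DR} one may write $F_1(X'/\bP^1)=\bP(\cE)$ for a rank-two bundle $\cE$ on $C$ of odd determinant. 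Moreover $X(k)\neq\emptyset$ forces $[\Pic^1(C)]=0$, hence every component $\Pic^d(C)$ is a trivial torsor under $A=\Pic^0(C)$ and carries a $k$-point; twisting $\cE$ by a $k$-rational line bundle I may normalize $\deg(\cE)=5$, exactly as in Section~\ref{subsect:GA}.

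The key step is to extract a $k$-rational line from the bundle $\bP(\cE)$. Following the section analysis of Section~\ref{subsect:GA}, lines on $X$ (disjoint from the auxiliary lines $\ell_1,\dots,\ell_4$) correspond to nonzero global sections of $\cE\otimes\cL$ with $\cL\in\Pic^{-1}(C)$. I would choose a $k$-rational $\cL\in\Pic^{-1}(C)$, available because $[\Pic^{-1}(C)]=0$. Riemann--Roch on the genus-two curve $C$ gives $\chi(\cE\otimes\cL)=\deg(\cE\otimes\cL)+2(1-2)=3-2=1>0$, so $\Gamma(C,\cE\otimes\cL)$ is a nonzero $k$-vector space; its projectivization is a nonempty projective space over $k$ and therefore has a $k$-point. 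Saturating the corresponding sub-line-bundle $\cO_C\hookrightarrow\cE\otimes\cL$ yields, by the valuative criterion on the smooth curve $C$, an honest $k$-rational section of the $\bP^1$-bundle $\bP(\cE)\to C$, which under the dictionary above is a $k$-rational line $\ell\subset X$. Alternatively, I could run the same argument with $\cL=\cO_C$ (so that $\chi(\cE)=5+2(1-2)=3$, giving a $\bP^2$ of sections), producing a $k$-rational twisted cubic through $x$ and then taking its residual line via Proposition~\ref{prop:twistedcubic}; either route exhibits a $k$-point of $F_1(X)$.

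With $\ell\in F_1(X)(k)$ in hand, projection from $\ell$ is the birational morphism $\Bl_\ell(X)\to\bP^3$ of Section~\ref{sect:ratbylines}, so Construction~\ref{cons:line} shows that $X$ is rational over $k$. This proves both assertions.

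I expect the main obstacle to be verifying that the correspondence between sections of $\bP(\cE)$ and lines on $X$ is genuinely defined over $k$ (Galois-equivariant) and that a nonzero global section gives an honest line rather than a degenerate curve. The first point is ensured because the whole construction---the point $x$, the bundle $X'\to\bP^1$, the curve $C$ and the sheaf $\cE$---descends to $k$; the second is handled by passing to the saturation, so that the resulting sub-line-bundle defines a section of the projective bundle everywhere on $C$. The purely arithmetic inputs, namely the existence of $k$-points on the trivial torsors $\Pic^{d}(C)$ and on the projective space $\bP(\Gamma(C,\cE\otimes\cL))$, are automatic and require no hypothesis on $k$ beyond $\mathrm{char}(k)\neq 2$, so the argument works uniformly over finite fields, where the naive \emph{generic $\cL$} version would be delicate.
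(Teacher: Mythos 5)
Your proof has the right mechanism (it is essentially the machinery of Section~\ref{subsect:GA} spelled out), but it has a genuine gap at the very first step, and this gap is precisely what the paper's actual proof is about. The analysis you invoke is not valid for an arbitrary $x\in X(k)$: Section~\ref{subsect:GA} explicitly assumes that through $x$ there pass four \emph{distinct} lines $\ell_1,\ldots,\ell_4\subset\bar{X}$ with $N_{\ell_i/\bar{X}}\simeq\cO_{\bP^1}^2$. A special $k$-point (for instance one lying on a line of $\bar{X}$, or through which some of the four lines coincide or have normal bundle $\cO_{\bP^1}(1)\oplus\cO_{\bP^1}(-1)$) does not yield the quadric surface bundle with the properties you use, and the dictionary between sections of $\bP(\cE)\to C$ and curves on $X$ breaks down. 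The paper secures a suitable $x$ in two moves that you omit entirely: first it reduces to infinite $k$, since over a finite field $F_1(X)$ is a principal homogeneous space under the Jacobian of a genus-two curve and so has a $k$-point by Lang's theorem \cite{LangFF} (Section~\ref{sect:ratbylines}), giving a line outright; then, for $k$ infinite, it invokes Remark~3.28.3 of \cite{CTSSDI}, which shows $X$ is unirational once $X(k)\neq\emptyset$, so that $X(k)$ is Zariski dense and a generic $x$ exists over $k$. Your closing claim that your argument ``works uniformly over finite fields'' is therefore backwards: over a finite field nothing guarantees that the (possibly tiny) set $X(k)$ contains a point satisfying the genericity hypothesis, and that case must be handled separately, exactly as the paper does.

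There is also a flaw in your saturation maneuver. If the section of $\cE\otimes\cL$, $\cL\in\Pic^{-1}(C)$, vanishes along a nonzero divisor $D$, saturating replaces the degree-one sub-line bundle $\cL^\vee\hookrightarrow\cE$ by $\cL^\vee(D)$ of degree $1+\deg D$; the resulting honest section of $\bP(\cE)\to C$ then arises from $\Gamma(C,\cE\otimes\cL(-D))$ with $\cL(-D)\in\Pic^{-1-\deg D}(C)$, i.e., it leaves the $\Pic^{-1}$ stratum where the dictionary of Section~\ref{subsect:GA} outputs lines disjoint from $\ell_1,\ldots,\ell_4$, and such sections may instead correspond to curves contracted under $X'\dashrightarrow X$ rather than to lines. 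This is why the paper says ``for generic $\cL$'' --- and genericity over $k$ is not free, since $k$-points of the torsor $\Pic^{-1}(C)$ need not be Zariski dense. Your alternative route is the one that can be made to work: for $\cL=\cO_C$ one has $h^0(\cE)\geq 3$, the twisted cubics through $x$ are parametrized by a Zariski $\bP^2$-bundle, and $k$-points of a $\bP^2$ \emph{are} dense for infinite $k$, so a general $k$-rational section gives an honest twisted cubic, and Proposition~\ref{prop:twistedcubic} converts it into a $k$-rational line. (A minor misattribution as well: $[\Pic^1(C)]=0$ is deduced in the paper from $\alpha=0$ via the odd determinant of $\cE$, not from $X(k)\neq\emptyset$.) With the reduction to infinite $k$, the density of $X(k)$ via unirationality, and the twisted-cubic route in place, your argument becomes a correct expansion of the paper's proof; as written, it is incomplete.
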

\begin{proof}
We may assume that $k$ is infinite -- we have already seen that $X$ always admits a line over finite
fields. In this case, Remark~3.28.3 of \cite{CTSSDI} applies to show that $X$ is unirational, and
thus $X(k)$ is Zariski dense. Hence we may find a point $x$ satisfying the genericity assumptions.
This allows us to apply the construction above.
\end{proof}

\begin{rema}
The presence of a line on $X$ allows us to realize $C \subset \bP^1 \times \bP^1$
as a bidegree $(2,3)$ curve (see Section~\ref{sect:ratbylines}).
It is perhaps surprising that the triviality of $\alpha$ forces the existence of odd
degree cycles on $C$.  
\end{rema}

\section{Variety of lines as a cocycle}
\label{sect:cocycle}
Let $C\ra \bP^1$ denote the genus two curve associated with $X$.
Note that $\Pic^0(C)$ and the Albanese of the variety of lines $F_1(X)$
are isomorphic by work of X.~Wang \cite{wang} -- we call this abelian surface $A$.
Moreover, we obtain identifications of principal homogeneous
spaces over $A$:
$$2[F_1(X)]=\Pic^1(C).$$
The class $[F_1(X)]$, as a principal homogeneous space over $A$, has order dividing
four by \cite[Th.~1.1]{wang}.

The associated morphism
$$\kappa:\Sym^2(F_1(X)) \ra \Pic^1(C)$$
has nodal Kummer surfaces as fibers. Over $C\subset \Pic^1(C)$,
we obtain the reducible conics $R_{2,red}(X) \subset R_2(X)$. Thus we have a 
natural factorization
$$
\begin{array}{ccc}
R_{2,red}(X) & \hookrightarrow & F_2(X) \\
                      & \searrow & \downarrow \\
                       &                &    C
\end{array}
$$
realizing the Kummer fibers as $16$-nodal quartic surfaces in the Brauer-Severi
fibration associated with $\alpha$.

We record some basic observations. 
\begin{itemize}
\item If $[F_1(X)]$ is trivial then $X$ admits a line over $k$ and is rational by 
Construction~\ref{cons:line}. 
\item The Brauer class $\alpha \in \Br(C)[2]$ is determined geometrically by the class
$[F_1(X)]$.
\item The class $[F_1(X)]$ has index dividing four when $X(k) \neq \emptyset$; indeed,
the lines incident to a point are defined over a quartic extension.
\end{itemize}
Thus the class $[F_1(X)]$ is the fundamental object for our purposes.

\begin{ques}  \label{ques:CT} \
\begin{enumerate}
\item[i.]{If
$[F_1(X)]$ has order four must $X$ be irrational over $k$?}
\item[ii.]{Are
there examples where $X$ is rational and $[F_1(X)] \neq 0$?}
\end{enumerate}
\end{ques}
The second question grew out of discussions with Colliot-Th\'el\`ene.

\begin{prop}
Consider the following statements:
\begin{enumerate}
\item{$[F_1(X)]$ has order two;}
\item{$[F_1(X)]$ has index two, i.e., $F_1(X)$
 admits a rational point over a quadratic extension of $k$;}
\item{$R_2(X) \neq \emptyset$;}
\item{$C$ admits a rational point.}
\end{enumerate}
Then we have
$$(3) \Rightarrow (4) \Rightarrow (1), (2) \Rightarrow (1).$$
\end{prop}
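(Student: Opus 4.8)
The plan is to prove the three implications separately, relying on three ingredients already available: the torsor identity $2[F_1(X)]=\Pic^1(C)$ in $H^1(k,A)$ recorded in this section, the morphism from conics to $C$ from Section~\ref{subsect:QP}, and the elementary fact that the order of a torsor class divides its index. For $(3)\Rightarrow(4)$ I would invoke the isomorphism $F_2(P/\bP^1)\xrightarrow{\sim} R_2(X)$ together with the factorization $F_2(P/\bP^1)\xrightarrow{\varpi} C\to\bP^1$. A conic defined over $k$ is a $k$-point of $R_2(X)$, and its image under $\varpi$ is a $k$-point of $C$; this is precisely the content of the Remark concluding Section~\ref{subsect:QP}, so $(3)\Rightarrow(4)$ is immediate.

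For $(4)\Rightarrow(1)$, a rational point $P\in C(k)$ determines the degree-one class $\cO_C(P)$, hence a $k$-rational point of the torsor $\Pic^1(C)$ under $A=\Pic^0(C)$. A torsor with a rational point is trivial, so $[\Pic^1(C)]=0$, and the identity $2[F_1(X)]=[\Pic^1(C)]$ forces $2[F_1(X)]=0$; thus the order of $[F_1(X)]$ divides two. For $(2)\Rightarrow(1)$, suppose $F_1(X)$ acquires a rational point over a quadratic extension $L/k$, so that $\operatorname{res}_{L/k}[F_1(X)]=0$. The standard relation $\operatorname{cor}_{L/k}\circ\operatorname{res}_{L/k}=[L:k]=2$ on $H^1(k,A)$ then yields $2[F_1(X)]=\operatorname{cor}_{L/k}(0)=0$, so again the order divides two.

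Each implication is short; the one point requiring care is the reading of statement (1). All three arguments produce $2[F_1(X)]=0$, i.e. \emph{order dividing two}, and none excludes order one. But order one means $F_1(X)(k)\neq\emptyset$, so that $X$ contains a line over $k$ and is rational by Construction~\ref{cons:line}. This degenerate case is the trivial sub-case of (1), and I would state (1) as ``order at most two'' to keep the implications clean rather than attempting to rule it out.
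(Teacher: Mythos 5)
Your proof is correct and is essentially the argument the paper intends: the paper states this proposition without an explicit proof, and your three steps use precisely its own ingredients --- the Remark at the end of Section~\ref{subsect:QP} for $(3)\Rightarrow(4)$, Wang's identity $2[F_1(X)]=[\Pic^1(C)]$ for $(4)\Rightarrow(1)$, and restriction--corestriction (period divides index) for $(2)\Rightarrow(1)$. Your caveat is also a fair reading of the statement: all three implications only give $2[F_1(X)]=0$, i.e.\ order \emph{dividing} two, and in the order-one case $X$ contains a $k$-line and is rational by Construction~\ref{cons:line}, so (1) should indeed be understood as ``order at most two.''
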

Below we analyze the geometry of each case, with a view toward understanding
whether these implications are strict.

\begin{rema}
Suppose that $k$ is a $C_1$-field and $C(k)\neq \emptyset$. Then $X$ admits a conic $D\subset X$
defined over $k$. Projecting from $D$ gives a fibration
$$\Bl_D(X) \ra \bP^2$$
in conics, with a quartic plane curve as degeneracy.  

Using conic bundles for rationality constructions
is problematic, as we cannot expect to find sections when the base is a surface.  Such sections would yield extra
divisor classes in the total space, which would have to be singular by the Lefschetz hyperplane theorem. 
(Our $X$ is rational over $\bar{k}$ but not because of the conic fibration.)
\end{rema}

\section{Odd degree curves, secants, and rationality}
\label{sect:odddegree}
\begin{theo} \label{theo:secant}
Let $X \subset \bP^5$ be a smooth complete intersection of
two quadrics. Suppose that $X$ admits a smooth geometrically
connected curve $R \subset X$ of odd degree. Then $X$ contains
a line and is rational.
\end{theo}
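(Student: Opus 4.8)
The plan is to reduce rationality to the existence of a $k$-line and then to force the torsor class $[F_1(X)]$ to vanish using the parity of $\deg R$. Recall from Section~\ref{sect:cocycle} that if $[F_1(X)]=0$ then $X$ carries a line over $k$ and is rational by Construction~\ref{cons:line}; recall also that $[F_1(X)]$, as a principal homogeneous space under $A=\Pic^0(C)$, has order dividing four \cite[Th.~1.1]{wang}. Writing $m$ for this order, so that $m\mid 4$, it suffices to exhibit the single relation $d\,[F_1(X)]=0$ in $H^1(k,A)$, where $d=\deg R$: since $d$ is odd we then have $m\mid\gcd(d,4)=1$, hence $[F_1(X)]=0$ and $X$ is rational.

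The relation $d\,[F_1(X)]=0$ is where $R$ enters. First I would use the Abel--Jacobi map $\mathrm{AJ}\colon \CH^2(\bar X)_{\mathrm{hom}}\to A(\bar{k})$, which is Galois-equivariant for the identification $\IntJ(X)=A$. One-cycles of a fixed degree $d$ form a coset, so $\mathrm{AJ}$ identifies the variety parametrizing them with a torsor under $A$ whose class is $d$ times the class of the degree-one component; by Wang's comparison the degree-one component is $F_1(X)$, so this torsor has class $d\,[F_1(X)]$. The smooth geometrically connected curve $R$, being defined over $k$, gives a $k$-rational point of the degree-$d$ cycle space and hence of this torsor; therefore its class vanishes, i.e.\ $d\,[F_1(X)]=0$. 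This matches the case $d=2$ recorded in Section~\ref{sect:cocycle}, where a conic yields $2[F_1(X)]=0$.

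A more geometric route, which explains the appellation ``secant,'' reaches the same conclusion by exhibiting a $k$-rational zero-cycle of odd degree on the surface $F_1(X)$. Through a general point of $\bar X$ pass four lines of $\bar X$, so the lines meeting $R$ sweep out a $k$-rational curve $D_R\subset F_1(X)$, finite of degree four over $R$; the lines meeting $R$ in at least two points --- the secant lines of $R$ contained in $X$ --- then form a $k$-rational zero-cycle $Z$ on $F_1(X)$. A double-point count for $D_R$ on the abelian surface expresses $\deg Z$ in terms of $d$ and the relevant intersection numbers, the point being that $\deg Z$ is odd precisely when $d$ is. Since the period of a torsor under an abelian variety divides its index, an odd-degree $k$-rational zero-cycle forces the index, and hence $m$, to divide an odd number; combined with $m\mid 4$ this again gives $[F_1(X)]=0$.

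The main obstacle is the clean identification underlying both routes: that the degree-$d$ torsor class is exactly $d\,[F_1(X)]$ over the nonclosed field $k$. For the Abel--Jacobi argument this requires the additivity of $\mathrm{AJ}$ together with Galois descent and Wang's comparison of $F_1(X)$ with the degree-one cycle torsor; for the secant argument it requires the mod-two secant computation, where smoothness and geometric connectedness of $R$ are used to guarantee that $D_R$ is defined and irreducible over $\bar k$ and that the double-point formula applies. Once $d\,[F_1(X)]=0$ is established the coprimality step is immediate, and the resulting $k$-line yields rationality via Construction~\ref{cons:line}.
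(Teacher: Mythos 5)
Your overall strategy---kill $[F_1(X)]$ by coprimality, using that its order divides four (Wang \cite{wang}) and that $R$ forces an odd multiple of the class to vanish---matches the paper's endgame, but your primary (Abel--Jacobi) route is genuinely different from the paper's. The paper never invokes a torsor of degree-$d$ one-cycles; it works entirely through explicit enumerative geometry of secants. Proposition~\ref{prop:gensecant} shows that when $\Sec(R)$ has the expected tangent-cone behavior along $R$ and only finitely many secants lie on $X$, those secants form a $k$-rational zero-cycle $\Sigma_R$ on $F_1(X)$ of degree $d^2-4d-4g+4$, odd exactly when $d$ is; period-index plus Wang then gives a $k$-point of $F_1(X)$ and rationality via Construction~\ref{cons:line}. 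Your Route 1 buys uniformity---no genericity hypotheses, no case analysis---but its pivot, the identification of the space of degree-$d$ cycle classes as a $k$-torsor of class $d\,[F_1(X)]$, is exactly the hard point you flag: it requires descending the Abel--Jacobi isomorphism $\CH^2(\bar X)_{\hom}\simeq A(\bar k)$ to a Galois-equivariant torsor formalism over an arbitrary field of characteristic $\neq 2$ (essentially the machinery of \cite{BenWit}, which the paper uses only in the real case). As written, Route 1 is a sketch that would be an acceptable alternative only once that identification is actually proved.

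Your Route 2 is the paper's argument, and there it has a genuine gap: you tacitly assume the secants of $R$ contained in $X$ are finite in number and that the double-point formula applies. The paper must and does treat the degenerate cases. Proposition~\ref{prop:reducetoinfinite} shows that failure of the tangent-cone hypothesis (multiplicity $d-2$ of $\Sec(R)$ along $R$) produces infinitely many secants on $X$, and Proposition~\ref{prop:infsec} then handles that case by a different mechanism: $R$ becomes a multisection of a ruled surface $\bP(S^*|B^{\nu})\to B^{\nu}$ over a curve mapping to $F_1(X)$; hyperplane sections give sections of degree $4b$, and subtracting a suitable multiple of these from $R$ yields an odd-degree zero-cycle on $B^{\nu}$, hence on $F_1(X)$. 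Without this branch (or without Route 1 made rigorous), your double-point count simply does not apply and the proof is incomplete. Note also that your parity claim is asserted rather than computed; the computation---via $p_a(R')=4g+4d-3$, the class $[D]=d\Theta$, and $p_a(d\Theta)=d^2+1$---is the substance of Proposition~\ref{prop:gensecant}.
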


\subsection{Background on secants}
Let $R \subset \bP^5$ be a smooth projective geometrically connected curve
of degree $d$ and genus $g$. Let $\Sec(R)$ denote the closure of the secants 
of $R$. We have a diagram
$$
\xymatrix{
B_2 \ar[r]^{\beta} \ar[d] &  \Sec(R)   \ar@{^{(}->}[r] & \bP^5 \\
\Sym^2(R)   & &
}
$$
where $\Sym^2(R)$ is the symmetric square of $R$, $B_2\ra \Sym^2(R)$ is $\bP^1$-bundle
parametrizing lines parametrized by pairs of points on $R$, and $\beta$ is the canonical
morphism.
\begin{prop} \cite[Th.~4.3]{Dale}
Assume that the span of $R$ has dimension at least four.
Then $\beta$ is birational onto its image and
$$\deg(\Sec(R))=\binom{d-1}{2} - g.$$
\end{prop}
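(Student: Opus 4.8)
The plan is to establish the two assertions in turn: generic injectivity of $\beta$, and then the degree formula. Since $\Sym^2(R)$ is a surface and $B_2\to\Sym^2(R)$ is a $\bP^1$-bundle, we have $\dim B_2=3$. The hypothesis that $R$ spans at least a $\bP^4$ guarantees that $\Sec(R)\subsetneq\bP^5$ is a proper threefold, so $\beta$ is generically finite and birationality is equivalent to the statement that a general point of $\Sec(R)$ lies on a unique secant of $R$. Granting this, $\beta_\ast[B_2]=[\Sec(R)]$ and, writing $H$ for the hyperplane class,
$$\deg\Sec(R)=\int_{\Sec(R)}H^3=\int_{B_2}(\beta^\ast H)^3,$$
so the degree becomes an intersection number on $B_2$.

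For birationality I would bound the two sources of non-injectivity. A point of $\Sec(R)$ with several preimages either lies on a trisecant line, a line meeting $R$ in at least three points, over which the three pairs among its intersection points all map to that point; or it lies on two genuinely distinct secants $\overline{ab}$ and $\overline{cd}$. In $\bG(1,5)$, of dimension $8$, each incidence with $R$ imposes three conditions on a line, so secants form a $2$-dimensional family while trisecant lines are overdetermined ($8-9<0$), generically absent, and in any case sweep out at most a curve. Two distinct secants meeting forces $a,b,c,d$ to be coplanar, so the common point lies on a $2$-plane meeting $R$ in at least four points; in $\bG(2,5)$, of dimension $9$, each incidence imposes two conditions, so such four-secant planes form a family of dimension $9-8=1$, and the resulting intersection points $\overline{ab}\cap\overline{cd}$ sweep out a locus of dimension at most one. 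Both loci have dimension strictly less than $3=\dim\Sec(R)$, so a general point lies on a single secant and $\beta$ is birational onto its image. I regard this as the crux, and it is exactly where the hypothesis on the span is indispensable: inside $\bP^3$ one finds instead a $1$-parameter family of trisecant lines, and $\beta$ has degree $\binom{d-1}{2}-g$, the classical count of apparent double points.

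For the degree I would reduce to a space curve. Choosing a general line $\Lambda\subset\bP^5$, which is disjoint from the proper threefold $\Sec(R)$, projection from $\Lambda$ restricts to a finite morphism $\Sec(R)\to\bP^3$ whose degree equals $\deg\Sec(R)$. The same projection carries $R$ isomorphically onto a smooth curve $R'\subset\bP^3$ of degree $d$ and genus $g$, the map being an embedding precisely because $\Lambda$ meets no secant or tangent line of $R$. The fibre of the projection over $q\in\bP^3$ is the plane $\langle\Lambda,q\rangle$, and a secant of $R$ meets this plane exactly when its image passes through $q$; using birationality, the points of $\Sec(R)\cap\langle\Lambda,q\rangle$ correspond bijectively to the secants of $R'$ through $q$. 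Hence $\deg\Sec(R)$ equals the number of secants of $R'$ through a general point of $\bP^3$, i.e. the number of apparent double points of $R'$. Projecting $R'$ further from such a point to $\bP^2$ yields a plane curve with only nodes, the center avoiding the tangent and trisecant lines of $R'$, which sweep out surfaces; comparing the arithmetic genus $\binom{d-1}{2}$ of this nodal image with its geometric genus $g$ gives $\deg\Sec(R)=\binom{d-1}{2}-g$.

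Finally, I would record that the same number arises intrinsically. Writing $B_2=\bP(\cE)$ for the rank-two secant bundle $\cE$ on $\Sym^2(R)$ and $\xi=\beta^\ast H$, the projective-bundle relation yields
$$\int_{B_2}\xi^3=\int_{\Sym^2(R)}\big(c_1(\cE)^2-c_2(\cE)\big),$$
whose evaluation via the standard intersection theory of $\Sym^2(R)$ returns $\binom{d-1}{2}-g$. This self-intersection depends only on $\cO_R(1)$ and not on the embedding, which explains the consistency of the two regimes above: in $\bP^3$ it records $\deg\beta$, whereas for span at least four it records $\deg\Sec(R)$.
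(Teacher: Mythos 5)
The paper does not actually prove this proposition: it is quoted from Dale, and the text only records the interpretation that secants through a general plane $P$ correspond to nodes of the projection $\pi_P(R)\subset\bP^2$, the formula being the genus deficit $\binom{d-1}{2}-g$. Your degree computation---project from a general line to get $R'\subset\bP^3$, then count secants of $R'$ through a general point as apparent double points via a nodal plane projection---is exactly that interpretation carried out in two steps, and it is fine \emph{granting} birationality and granting that the general projection of $R'$ to $\bP^2$ is nodal (a general-position fact which is classical in characteristic $0$ but genuinely delicate in characteristic $p$, and the paper's standing hypothesis is only $\operatorname{char}(k)\neq 2$).

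The genuine gap is in the birationality half, which you yourself identify as the crux and then settle only by expected-dimension numerology. For a \emph{fixed} curve $R$, incidence loci such as trisecant lines or four-secant planes have dimension at least the expected one wherever nonempty, but there is no upper bound in general: the naive count $8-9<0$ in $\bG(1,5)$ does not show trisecants are absent, and indeed a smooth curve spanning $\bP^5$ can perfectly well contain three collinear points, or lie on a scroll meeting every ruling three times, in which case the trisecants form a one-parameter family sweeping out a \emph{surface}---so your fallback claim that they ``sweep out at most a curve'' is false (the conclusion survives only because $2<3$). Moreover the hypothesis allows $R$ to span exactly a $\bP^4$, where your numbers are wrong even heuristically: there trisecants have expected dimension $0$ and four-secant planes expected dimension $2$. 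What actually has to be ruled out is a two-parameter family of trisecants (equivalently, that the \emph{general} secant is a trisecant) and a three-parameter family of pairs of incident secants (equivalently, that the general point of $\Sec(R)$ lies on two secants); this is precisely the content of the general trisecant lemma, respectively of Dale's theorem, and requires a real argument---uniform position, or Dale's projection argument, with extra care in positive characteristic. As written, the crux is asserted rather than proved; the paper sidesteps it entirely by citing Dale.
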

The formula has the following interpretation: Secants passing through
a plane $P$ correspond to nodes of the image of the projection
$$\pi_P:R \ra \bP^2;$$
the formula is the difference between the genus of $R$ and the arithmetic genus
of $\pi_P(R)$.

\begin{prop} \cite[1.4, 1.5]{Bertram}
Assume that length four subschemes of $R$ impose independent
conditions on linear forms. Then $\Sec(R)$ is smooth away from $R$
and its projective tangent cone at $r \in R$ is
obtained by the projection of $R$ from the tangent line of $R$
at $r$.
\end{prop}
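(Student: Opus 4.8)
The plan is to analyze the resolution $\beta\colon B_2 \to \Sec(R)$ from the diagram above, exploiting that $B_2$ is the smooth threefold $\bP^1$-bundle over the (smooth) symmetric square $\Sym^2(R)$. By the previous proposition $\beta$ is birational (the span being at least four), so $\Sec(R)$ is an integral threefold. It then suffices to prove that the restriction of $\beta$ over $\Sec(R)\setminus R$ is an isomorphism onto its image, since smoothness of $B_2$ propagates. I would set $U=\beta^{-1}(\Sec(R)\setminus R)$, an open smooth subscheme of $B_2$, and show $\beta\colon U \to \Sec(R)\setminus R$ is an isomorphism.

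First I would establish that $\beta$ is injective on $U$. If a point $y\notin R$ lay on two distinct secant or tangent lines, those two lines span a plane, exhibiting a length-at-most-four subscheme of $R$ (the two pairs of points, counted with the relevant tangencies) that spans only a $\bP^2$, contradicting the hypothesis that length-four subschemes impose independent conditions. Next I would show $\beta$ is unramified on $U$ by computing $d\beta$ in the affine cone: at a point of a proper secant $\overline{pq}$ the embedded tangent space to $\Sec(R)$ is the join $\langle T_pR, T_qR\rangle$ (Terracini), while at a point of the tangent surface over $p$ the same computation, pushing the two points of the secant together, yields the osculating space $\langle P,P',P'',P'''\rangle$. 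In each case the span has the expected projective dimension three precisely when the associated length-four subscheme — $2p+2q$ in the first case, $4p$ in the second — spans a $\bP^3$, which is once more the hypothesis. Being proper, injective on points, and unramified, $\beta|_U$ is a proper monomorphism, hence a closed immersion; as it is surjective onto the reduced variety $\Sec(R)\setminus R$, it is an isomorphism, proving smoothness.

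For the projective tangent cone at $r\in R$, I would pass to the blow-up $\Bl_r\bP^5$ and identify the projectivized tangent cone inside $\bP(T_r\bP^5)$ with the limiting directions of points of $\Sec(R)$ tending to $r$. Such limits come from secants with one endpoint tending to $r$: the limiting direction is a combination of the tangent direction $T_rR$ and the direction toward the other endpoint $q$, so the cone is swept by the lines joining the tangent point to the points of the projection of $R$ from $r$. Equivalently it is the cone over the image of $R$ under projection from the tangent line $T_rR$, which is the asserted description. Here the independence hypothesis guarantees that this projection from $T_rR$ is well behaved — no secant or further tangent of $R$ meets $T_rR$ — so that the tangent cone is reduced of the expected dimension two.

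The main obstacle is the differential and tangent-cone analysis along the degenerate loci, namely the tangent surface for smoothness and the curve $R$ itself for the tangent cone, where the two points of a secant collide; handling these uniformly requires treating the nonreduced length-four subschemes $4p$ and $2p+2q$ on exactly the same footing as four distinct points, which is precisely what the scheme-theoretic independence hypothesis is built to accomplish. A secondary subtlety is that the osculating computation divides by factorials up to $3!$, so it is cleanest in characteristic zero (or coprime to $6$); this is compatible with, though slightly stronger than, the standing assumption that $\operatorname{char}k\neq 2$.
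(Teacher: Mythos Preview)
The paper does not give its own proof of this proposition: it is quoted directly from Bertram and immediately applied to deduce that $\Sec(R)$ has multiplicity $d-2$ along $R$. So there is no argument in the paper to compare against; you have supplied what the paper outsources.

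Your outline is the standard one and is essentially correct. Two small points worth tightening. First, in the injectivity step you should note that two distinct secant or tangent lines meeting at $y\notin R$ cannot share a point of $R$ (two distinct lines through $y$ and through $p\in R$ would coincide), so their length-two subschemes really are disjoint and you do get a length-four subscheme in a plane. Second, the passage ``proper, injective on points, unramified $\Rightarrow$ closed immersion $\Rightarrow$ isomorphism'' is right but terse: properness plus injectivity gives finiteness, and then for the finite bijection $\beta|_U$ with reduced target the unramified condition $\mathfrak m_y\mathcal O_{X,x}=\mathfrak m_x$ together with Nakayama forces $\mathcal O_{Y,y}\to\mathcal O_{X,x}$ to be surjective, hence an isomorphism.

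For the tangent cone your phrasing slides between projection from $r$ (target $\bP^4$) and projection from $T_rR$ (target $\bP^3$). The clean statement, which is what the paper uses, is that the projectivized tangent cone in $\bP(T_r\bP^5)\cong\bP^4$ is the cone with vertex the point $[T_rR]$ over the image of $R$ under projection from the tangent line; your limiting-secant computation gives exactly this once you observe that the curve $\pi_r(R)\subset\bP^4$ passes through $[T_rR]$, so coning over it from that vertex is the same as coning over its further projection to $\bP^3$.
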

Under our assumptions projection of $R$ from the tangent line
at $r$ maps it isomorphically onto a degree $d-2$ curve in 
$\bP^3$. Thus $\Sec(R)$ has multiplicity $d-2$ along $R$.

\subsection{Analysis in complete intersections of quadrics}
Let $X \subset \bP^5$ denote a smooth complete intersection of quadrics.
We denote its hyperplane class by $h$.
Consider 
$$X'=\{(x,\ell): x\subset \ell  \} \subset X \times F_1(X)$$
which has the following properties:
\begin{itemize}
\item{$\pi_1:X' \ra X$ is generically finite of degree four;}
\item{$\pi_1$ is branched over a hypersurface in $X$ of degree eight.}
\end{itemize}
It is clear that four lines pass through a generic point of $X$.
For the second assertion, fix a generic line $\ell \in F_1(X)$ so
the resulting
$$\ell':=X'\times_X \ell \ra \ell$$
has two disjoint components $\ell$ and $C$, where $C \ra \ell$
is a degree-three cover from a curve of genus two branched over eight
points.  

\begin{prop} \label{prop:gensecant}
Let $R\subset X$ be smooth and geometrically connected
of degree $d\ge 3$ and genus $g$. Assume that
\begin{itemize}
\item{$\Sec(R)$ has isolated
singularities of multiplicity $d-2$ at the generic point $r$
of $R$, and }
\item{finitely many secants to $R$ are contained in $X$.}
\end{itemize}
Let $\Sigma_R$ denote the sum of the secants to $R$ in $X$
counted with multiplicity. Then we have
$$(d-2)R + \Sigma_R \equiv  \left(\binom{d-1}{2} - g\right) h^2$$
in the Chow group of $1$-cycles of $X$.
\end{prop}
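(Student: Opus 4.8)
The plan is to obtain the identity as the restriction to $X$ of the cycle class of $\Sec(R)$ in $\bP^5$. Since $\Sec(R)$ is an irreducible three-dimensional subvariety, its class in $\CH^2(\bP^5)$ is $[\Sec(R)]=\deg(\Sec(R))\,H^2=\bigl(\binom{d-1}{2}-g\bigr)H^2$, where $H$ is the hyperplane class and the degree is read off from the cited formula. Let $i\colon X\hookrightarrow\bP^5$ denote the inclusion; it is a regular embedding of codimension two cut out by the two quadrics, and $i^{*}H=h$, so $i^{*}[\Sec(R)]=\bigl(\binom{d-1}{2}-g\bigr)h^{2}$ in $\CH^{2}(X)=\CH_{1}(X)$. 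The substance of the proposition is that this restricted class is represented by the honest intersection cycle $\Sec(R)\cdot X$, and that the latter equals $(d-2)R+\Sigma_R$. Provided $\Sec(R)\cap X$ has pure dimension one, the refined Gysin class $i^{!}[\Sec(R)]$ agrees with $i^{*}[\Sec(R)]$ and is computed by summing the local intersection multiplicities over the components of $\Sec(R)\cap X$; so the problem reduces to identifying those components and their multiplicities.

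First I would classify the components of $\Sec(R)\cap X$, showing they are precisely $R$ together with the finitely many secants of $R$ contained in $X$. A general point $p$ of any component $Z\neq R$ lies on a secant line $L_p$ of $R$, and $L_p$ already meets $X$ in the two points of $R\cap L_p$; if $p\notin R$, then $L_p\cap X$ contains at least three points, and a line meeting the complete intersection of two quadrics in three points is contained in it, whence $L_p\subset X$. Thus $Z$ is one of the secants lying in $X$, of which there are only finitely many by hypothesis (the limiting tangent lines being treated as degenerate secants). This simultaneously shows that $\Sec(R)\cap X$ is purely one-dimensional, so the intersection is dimensionally proper and the reduction of the first paragraph is justified; the part of the intersection cycle supported on these secants is by definition $\Sigma_R$.

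It remains to compute the multiplicity with which $R$ occurs, and this is the main obstacle. By Bertram's description the projective tangent cone of $\Sec(R)$ at a general $r\in R$ is the cone over the degree-$(d-2)$ image of $R$ under projection from the tangent line $T_rR$, so $\operatorname{mult}_R\Sec(R)=d-2$. Since $X$ is smooth along $R$, I would argue that $T_rX$ is transverse to this tangent cone, using the isolated-singularity hypothesis, so that the local intersection multiplicity of the component $R$ in $\Sec(R)\cdot X$ equals $\operatorname{mult}_R\Sec(R)=d-2$ with no additional factor. Assembling the three pieces gives $\Sec(R)\cdot X=(d-2)R+\Sigma_R$, and comparison with the restricted class yields $(d-2)R+\Sigma_R\equiv\bigl(\binom{d-1}{2}-g\bigr)h^{2}$ in $\CH_1(X)$. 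The delicate point throughout is precisely this multiplicity count: one must verify that the smooth threefold $X$ meets the tangent cone of $\Sec(R)$ transversally, so that the contribution of $R$ is exactly $d-2$ rather than some larger multiple. This is where I expect the real work to lie, the hypotheses on the singularities of $\Sec(R)$ along $R$ being designed exactly to secure it; the degree of $\Sec(R)$ and the classification of components are by comparison formal.
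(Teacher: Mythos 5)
Your framework (restricting $[\Sec(R)]=\bigl(\binom{d-1}{2}-g\bigr)H^2$ to $X$ and classifying the components of $\Sec(R)\cap X$ via the observation that a trisecant line of $X$ lies in $X$) agrees with the paper's starting point, but the argument breaks down exactly where you flag it: the multiplicity of $R$ in the intersection cycle. The hypothesis that $\Sec(R)$ has multiplicity $d-2$ at the generic point $r\in R$ yields only the inequality $m\ge d-2$ for the coefficient of $R$ in $\Sec(R)\cdot X$; it does not supply the transversality you invoke. Indeed, naive transversality is impossible here: the tangent cone of $\Sec(R)$ at $r$ is the cone with vertex the tangent line $T_rR$ over the projected curve, and $T_rR\subset T_rX$, so $T_rX$ always meets the cone along the vertex. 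Equality $m=d-2$ is therefore a genuine claim needing its own proof, and your proposal contains no mechanism for it. The paper does not prove any local transversality either; it pins down $m$ by an independent \emph{global} computation of $\deg(\Sigma_R)$. Namely, it forms the incidence curve $R'=X'\times_X R$ over $R$, where $X'\ra X$ is the degree-four cover by pointed lines, branched over a degree-eight hypersurface; Riemann--Hurwitz gives $p_a(R')=4g+4d-3$. The finiteness of secants in $X$ shows $R'$ maps birationally onto its image $D\subset F_1(X)$, which has class $d\Theta$ and hence arithmetic genus $d^2+1$, and secants contained in $X$ correspond to double points of $R'\ra D$, so $\deg(\Sigma_R)=p_a(d\Theta)-p_a(R')=d^2-4d-4g+4$. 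Substituting into the degree identity $md+\deg(\Sigma_R)=4\bigl(\binom{d-1}{2}-g\bigr)=2(d-1)(d-2)-4g$ then \emph{forces} $m=d-2$. In other words, the step you defer as ``where I expect the real work to lie'' is the entire content of the paper's proof, and it is resolved by bookkeeping on $F_1(X)$, not by a tangent-space argument; as it stands your proof is incomplete at its central point.

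A secondary omission: the degree formula $\deg\Sec(R)=\binom{d-1}{2}-g$ that you quote holds only when the span of $R$ has dimension at least four (this is a hypothesis of the cited result). The paper accordingly treats the curves spanning only a $\bP^3$ separately -- twisted cubics, where $\Sigma_R=\ell$ is the residual line and $R+\ell\equiv h^2$, and elliptic quartics, where $R\equiv h^2$ -- checking the asserted formula directly in each case. Your argument silently applies the secant-degree formula in all cases and would be wrong, for instance, for a twisted cubic, whose secant variety is its spanning $\bP^3$.
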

This is similar to results of M.~Shen \cite{Shen} for cubic threefolds.
\begin{proof}
We assume for the moment that $R$ spans a subspace of dimension at least four.
Since $[\Sec(R)]\equiv (\binom{d-1}{2}-g)h^2$ and $\Sec(R)$ intersects
$X$ in pure dimension one, we just need to interpret the terms of the 
intersection. The computation of tangent cones above means that
$\Sec(R) \cap X$ has multiplicity  $m \ge (d-2)$ along $R$.

Consider the induced morphism
$$R':=X'\times_X R \ra F_1(X);$$
its arithmetic genus $p_a(R')$ satisfies
$$2p_a(R') -2 = 4(2g-2)+8d$$
whence $p_a(R')= 4g + 4d - 3$.
We claim $R'$ maps birationally onto its image $D$. Indeed, suppose
that $[\ell]\in D$ has two points $r'_1,r'_2 \in R'$ lying over it.
Let $r_1,r_2 \in R$ be their images in $R$; the line $\ell$
contains these points and is therefore a secant to $R$. 

An intersection computation shows that the class $[D] \in N_1(F_1(X))$
(one-cycles up to numerical equivalence) is equal to $d\Theta$. 
Curves in this class have arithmetic genus $d^2+1$ and secants
to $R$ correspond to double points of the induced map
$R' \ra D.$ There are
$$p_a(d\Theta) - p_a(R') = d^2 - 4d -4g + 4$$
such points, whence
$$\deg(\Sigma_R)= d^2 -4d -4g+4.$$
Comparing degrees
$$m d + d^2 -4d-4g+4 = 2(d-1)(d-2) - 4g$$
we conclude that $m=d-2$ and the formula follows.

Now assume that $R$ spans a subspace of dimension three. 
If $R$ is a twisted cubic then it is residual to a line $\ell$ in a complete
intersection of linear linear forms; $\ell$ is the unique secant
to $R$ in $X$ so that $\Sigma_R=\ell$. Then we obtain
$$ R + \ell \equiv h^2$$
which is consistent with our general formula. If $R$ is an elliptic
quartic curve then 
$$R \equiv h^2$$
which is also consistent with the formula.
\end{proof}

\begin{prop} \label{prop:reducetoinfinite}
Let $R\subset X$ and assume that $\Sec(R)$ does not have
isolated singularities of multiplicity $d-2$ at the generic point $r$
of $R$. Then there are infinitely many secants to $R$ contained in $X$.
\end{prop}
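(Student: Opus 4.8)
The plan is to prove the contrapositive: if only finitely many secants to $R$ lie in $X$, then $\Sec(R)$ does have isolated singularities of multiplicity $d-2$ at the generic point of $R$. The engine of the argument is a simple observation about lines and the pencil of quadrics cutting out $X$. Write $X = Q_0 \cap Q_1$. If $L$ is a secant to $R$, meeting $R$ in the length-two scheme $\{r_1, r_2\}$, then $r_1, r_2 \in Q_i$ for $i = 0,1$; since a line not contained in a quadric meets it in a scheme of length exactly two, any point $p \in L \cap X$ with $p \notin \{r_1, r_2\}$ forces $L \cap Q_i$ to have length at least three, hence $L \subset Q_i$ for both $i$, i.e. $L \subset X$. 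We may assume $R$ spans at least a four-dimensional space, the remaining cases being elementary (cf. the end of the proof of Proposition~\ref{prop:gensecant}).

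First I would use this to control the intersection $\Sec(R) \cap X$. Both $\Sec(R)$ and $X$ are threefolds in $\bP^5$, so the intersection has dimension at least one, and $\Sec(R) \not\subset X$ since $X$ is smooth whereas $\Sec(R)$ is singular along $R$; thus $\Sec(R) \cap X$ has dimension one or two. Suppose it had a two-dimensional component $S$. A general point $s \in S$ lies off $R$ (as $\dim S = 2 > \dim R$) and on some secant $L$; by the observation above $L \subset X$, and as $s$ sweeps the surface $S$ these secants cannot all coincide, so infinitely many secants lie in $X$. Contrapositively, the hypothesis that only finitely many secants lie in $X$ forces $\Sec(R) \cap X$ to be of pure dimension one; moreover the same observation shows that every component off $R$ is one of these finitely many secants, so $\Sec(R) \cap X = m\,R + \Sigma_R$ with $\Sigma_R$ the finite secant sum.

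It then remains to identify $m$ and the transverse singularity type. Here I would run the computation of Proposition~\ref{prop:gensecant} in reverse. Since only finitely many secants lie in $X$, the map $R' \to D$ of that proof is birational onto its image, so the double-point count gives $\deg \Sigma_R = d^2 - 4d - 4g + 4$; comparing with the degree relation $m\,d + \deg \Sigma_R = 4\bigl(\binom{d-1}{2} - g\bigr)$ obtained from $[\Sec(R)] = \bigl(\binom{d-1}{2} - g\bigr) h^2$ forces $m = d-2$. Finally, by the results of Dale \cite{Dale} and Bertram \cite{Bertram} the tangent cone of $\Sec(R)$ along $R$ is the cone over the projection of $R$ from its tangent line, of total degree $d-2$; the equality $m = d-2$ together with properness of the intersection means this tangent cone is reduced, which is exactly the statement that the transverse singularity at the generic point of $R$ is isolated of multiplicity $d-2$.

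The main obstacle is this last step: the multiplicity bookkeeping that ties the scheme-theoretic intersection multiplicity $m$ of $\Sec(R) \cap X$ along $R$ to the reducedness of the tangent cone of $\Sec(R)$, and hence to the isolatedness of the transverse singularity. One must verify that $m$ equals the intrinsic multiplicity of $\Sec(R)$ along $R$ (i.e. that $X$ meets the tangent cone transversely enough to contribute no excess), and that a non-reduced tangent cone---arising precisely when the projection from the tangent line fails to be birational---would strictly raise $m$ above $d-2$, contradicting the degree count. Making this comparison of multiplicities precise, rather than the dimension count of the second paragraph, is where the real work lies.
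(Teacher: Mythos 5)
Your key lemma --- a line meeting $X$ in a scheme of length at least three lies in $X$, because a line not contained in a quadric meets it in length at most two --- is exactly the engine of the paper's proof, but the paper applies it directly, whereas your contrapositive route stalls precisely where you admit ``the real work lies.'' The paper's argument is two lines long: the failure of the singularity hypothesis is interpreted as producing a \emph{trisecant} through the generic point $r$ of $R$, i.e., points $p,q\in R\setminus\{r\}$ with $r\in\ell(p,q)$; this line then meets $X$ in at least three points of $R$, hence lies in $X$; and varying $r$ over $R$ sweeps out a curve $B\to F_1(X)$ of secants, with $R$ contained in the associated ruled surface $\bP(S^*|B)\subset X$. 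You never extract this trisecant from the hypothesis; instead you route everything through the intersection cycle $\Sec(R)\cdot X$ and defer the decisive step --- passing from the cycle-theoretic multiplicity $m=d-2$ back to the \emph{intrinsic} statement about $\Sec(R)$ --- to an unproved comparison of multiplicities. That is a genuine gap, not a finishing touch: the whole content of the proposition lives there. Note also that the non-isolatedness failure mode means $\operatorname{Sing}(\Sec(R))$ has a component through generic $r$ other than $R$, which is swept by extra secants through points of $R$; so the trisecant through the generic point is not an optional reformulation but the actual mechanism, and your framework never touches it.

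Beyond being incomplete, the deferred step is harder than you suggest, and parts of the machinery you import are unavailable in your setting. Deducing ``isolated singularity of multiplicity exactly $d-2$'' from $m=d-2$ requires knowing that $X$ meets the tangent cone of $\Sec(R)$ along $R$ properly; the finiteness hypothesis gives no such transversality, and if the tangent hyperplanes of the quadrics at generic $r$ contained components of the tangent cone, the intersection multiplicity could differ from the intrinsic multiplicity, defeating the ``$m$ would strictly exceed $d-2$'' dichotomy you rely on. Moreover, the bookkeeping borrowed from Proposition~\ref{prop:gensecant} carries its own genericity: the computation $p_a(R')=4g+4d-3$ presumes $R$ meets the degree-eight branch divisor of $\pi_1$ transversally (in particular $R$ not inside it), and identifying the double-point count $d^2-4d-4g+4$ with $\deg\Sigma_R$ \emph{taken with the intersection multiplicities of the cycle $\Sec(R)\cdot X$} is itself an instance of the multiplicity-matching you are trying to establish --- so the reversal risks circularity. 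Your first two paragraphs (the line-in-quadric observation and the purity of $\Sec(R)\cap X$ under the finiteness hypothesis) are correct and in the paper's spirit; to repair the proof, use the same observation at a third point of $R$: show the hypothesis yields $p,q\in R\setminus\{r\}$ with $r\in\ell(p,q)$, conclude $\ell(p,q)\subset X$, and let $r$ vary.
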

\begin{proof}
Under our assumptions, there exist points $p,q \in R \setminus \{r\}$ such that
the secant $\ell(p,q)$ contains $r$. It follows that the line
$\ell(p,q)$ meets $X$ with multiplicity at least three and thus
is contained in $X$. Varying over $r \in R$, we obtain a curve
$$B \ra F_1(X)$$
such that $R$ is contained in the ruled surface ${\bP}(S^*|B) \subset X$
associated with $B$. Here $S$ is the universal line sub-bundle on
the Grassmannian $\Gr(2,6)$.
\end{proof}

\begin{prop} \label{prop:infsec}
Suppose that $R \subset X$ admits infinitely many secants on $X$.
If $R$ has odd degree then $X$ admits a cycle of lines of odd degree.
\end{prop}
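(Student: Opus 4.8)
The plan is to exploit the ruled surface produced in the proof of Proposition~\ref{prop:reducetoinfinite}. There we obtained a curve $B \ra F_1(X)$, whose points are the secant lines to $R$ contained in $X$, together with the ruled surface $\Sigma := \bP(S^*|_B) \subset X$ containing $R$. After replacing $B$ by its normalization (a $k$-morphism), the projection $\pi:\Sigma \ra B$ becomes a genuine $\bP^1$-bundle over $k$, and the fiber over a point $b\in B$ maps to the line of $X$ indexed by $b$; thus the fibers $f$ are lines, and the hyperplane class satisfies $h\cdot f=1$. Since a general secant meets $R$ in exactly two points, $R$ is a bisection: $R\cdot f=2$. (That $\Sigma$ is genuinely two-dimensional, equivalently that $R$ spans at least a $\bP^3$, holds because $X$ contains no plane, $F_2(X)=\emptyset$, so no plane section of $X$ can contain a surface.)

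Next I would run the numerics in $\Pic(\Sigma)$. Because $\Sigma=\bP(\cE)$ for the rank-two bundle $\cE=S^*|_B$ defined over $k$, the Picard group splits over $k$ as $\Pic(\Sigma)=\pi^*\Pic(B)\oplus \bZ\,[H]$, where $H=h|_\Sigma$ and $H\cdot f=1$. Writing $[R]=2[H]+\pi^*\delta$ with $\delta\in\Pic(B)$ — the coefficient of $[H]$ is forced to equal $R\cdot f=2$ — the class $\pi^*\delta=[R]-2[H]$ is manifestly defined over $k$, and hence so is $\delta$. Intersecting with $h$ yields $\deg\delta = R\cdot h - 2\,H\cdot h = d - 2\deg(\Sigma)$, which is odd precisely because $d$ is odd.

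Finally I would convert $\delta$ into an honest cycle of lines. A line bundle on the smooth projective curve $B$ is represented by a $k$-rational divisor, so write $\delta=\sum_i n_i p_i$ with the $p_i$ closed points of $B$. The preimage $\pi^*\delta=\sum_i n_i f_{p_i}$ is then a $k$-rational $1$-cycle supported on fibers, i.e.\ on lines of $X$, of degree $\sum_i n_i[\kappa(p_i):k]=\deg\delta=d-2\deg(\Sigma)$, which is odd. Equivalently, pushing the relation $\pi^*\delta=[R]-2[H]$ forward to $X$ exhibits this cycle of lines as rationally equivalent to $R-2H$ in $\CH_1(X)$. Either way $X$ carries a cycle of lines of odd degree, as claimed. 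I expect the only delicate point to be the descent to $k$: one must use that $\Sigma$ is a projectivized bundle over $k$, so that $\Pic(\Sigma)$ splits over $k$ and $[R]-2[H]$ produces a genuine $k$-rational class $\delta$, together with the fact that line bundles on a curve are always represented by $k$-rational divisors. The parity computation itself is immediate.
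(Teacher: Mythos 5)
Your overall route is the same as the paper's: factor $R$ through the ruled surface $\Sigma=\bP(S^*|_{B^\nu})$, split the class of $R$ into a horizontal part and a vertical part $\pi^*\delta$, and read off an odd-degree zero-cycle on the base, hence an odd-degree cycle of lines on $X$. But there is one genuine gap: the claim that $R$ is a bisection, $R\cdot f=2$. A \emph{general} secant of $R$ in $\bP^5$ does meet $R$ in exactly two points, but the secants contained in $X$ form a special family, and precisely in the situation where Proposition~\ref{prop:infsec} is invoked -- namely via Proposition~\ref{prop:reducetoinfinite} -- the one-parameter family consists of lines $\ell(p,q)$ passing through a \emph{third} point $r$ of $R$; every line of the family is at least a trisecant, so the multisection degree $m=R\cdot f$ is at least $3$, and nothing in the hypotheses pins $m$ down. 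Writing $[R]=m[H]+\pi^*\delta$, your own computation gives $\deg\delta=d-m\deg(\Sigma)$, and with $m$ unknown the parity conclusion collapses unless you know that $m\deg(\Sigma)$ is even.

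The paper supplies exactly the missing evenness, and your argument is repaired by the same observation: since $\Pic(X)=\bZ h$, the cycle class of the ruled surface in $X$ is $bh$ with $b=(\Theta\cdot B)_{F_1(X)}$, so $\deg(\Sigma)=bh^3=4b$; equivalently, the hyperplane section of $\Sigma$, a unisecant curve of class $bh^2$, has even degree $4b$. Then $\deg\delta=d-4bm\equiv d\pmod 2$ is odd for \emph{every} $m$, which is what the paper's phrase ``take the difference of $R$ and a suitable multiple of the sections'' encodes. The remainder of your writeup -- the $k$-rational splitting $\Pic(\Sigma)=\pi^*\Pic(B^\nu)\oplus\bZ[H]$ for a projectivized bundle defined over $k$, representing $\delta$ by a $k$-rational divisor, and pushing $\pi^*\delta$ forward to a $1$-cycle supported on lines -- is sound and matches the paper's intent (the paper phrases it more briefly as an odd-degree zero-cycle on $B^\nu$, hence on $F_1(X)$); your explicit care over descent to $k$ is a useful elaboration rather than a divergence. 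So: same strategy, correct bookkeeping, but replace the unjustified $m=2$ by the divisibility $4\mid\deg(\Sigma)$ coming from $\Pic(X)=\bZ h$ and $\deg X=4$.
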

\begin{proof}
Let $B^{\nu} \ra B \subset F_1(X)$ denote the normalization
such that $R$ factors through
a ruled surface $\bF:=\bP(S^*|B^{\nu}) \ra X$. Writing 
$$b=(\Theta\cdot B)_{F_1(X)},$$
we see that $[\bF]=bh$ and, cutting by hyperplanes,
we find that $\bF\ra B^{\nu}$ admits sections
of class $bh^2$. These both have degree $4b$. If $\bF \ra B^{\nu}$ admits
a multisection $R$ then $B^{\nu}$ admits a zero-cycle
of odd degree as well. Indeed, take the difference of $R$ and a suitable
multiple of the sections mentioned above. Thus $F_1(X)$ admits
such a cycle as well.
\end{proof}

\subsection{Proof of Theorem~\ref{theo:secant}}
If $X$ contains a line then there is nothing to prove. Otherwise,
Proposition~\ref{prop:gensecant} covers the `generic' case. The
number of secants with multiplicities is
$$\deg(\Sigma_R)= d^2 -4d -4g+4,$$
which is odd whenever $d$ is odd. However, $F_1(X)$ has a point
whenever it admits a cycle of odd degree. When the 
genericity hypotheses fail to be satisfied,
Proposition~\ref{prop:reducetoinfinite} puts us in the case where
there are infinitely many secants to $R$ on $X$. Here we apply 
Proposition~\ref{prop:infsec} to conclude the result.

\section{Two skew lines}
\label{sect:skew}
The motivating question of this section is: Let $X \subset \bP^5$ denote a smooth
complete intersection of two quadrics over a field $k$. Suppose that 
$X$ admits a line $\ell \subset X$ defined over a quadratic extension $L/k$,
not containing a $k$-rational point of $X$. In other words, $\ell$ and its 
Galois conjugate $\ell'$ are disjoint. When does it follow that $X$ is rational? 
\begin{exam}
Over $k=\bR$ we are guaranteed lines over quadratic extensions.
If $X(\bR)= \emptyset$ then pairs of conjugate lines are
necessarily skew but $X$ is not rational.
So the existence of two conjugate lines {\em per se} does
not imply rationality.
\end{exam}

\begin{cons} \label{cons:dP6} (cf. \cite[\S 4,6]{CTSSDI})
Suppose that $X$ admits a pair of skew lines $\ell$ and $\ell'$,
Galois conjugate over
$k$. Express
$$\operatorname{span}(\ell,\ell')\cap X$$
as a quadrilateral $\{\ell,m_1,\ell',m_2\}$. Then
projecting from $\operatorname{span}(\ell,\ell')$
gives a sextic del Pezzo fibration
$$\varpi: Y \ra \bP^1,$$
obtained by blowing down skew lines.
\end{cons}
Our analysis will describe the degeneracy locus and associated
Galois-theoretic data of $\varpi$.

\subsection{Details of the construction}

Let $m_1,m_2 \subset X$ be residual to $\ell$ and $\ell'$ in a codimension-two
linear section of $X$
$$\operatorname{span}(\ell,\ell') \cap X = \ell \cup m_1 \cup \ell' \cup m_2.$$
Consider the hyperplane sections of $X$ containing $\ell,\ell',m_1$, and $m_2$,
which are quartic del Pezzo surfaces with a fixed anti-canonical cycle of lines.
The fiber-wise linear series consists of quadrics vanishing 
along $\ell$ and $\ell'$, which collapse $m_1$ and $m_2$.

\begin{rema}
In this situation, we have a natural rational map
$$\psi: X \dashrightarrow \mathbf{R}_{L/k} \bP^3$$
induced by projection from $\ell$ and $\ell'$. This factors as
\begin{itemize}
\item{blowing up $\ell$ and $\ell'$;}
\item{blowing down the lines $m_1$ and $m_2$ residual to $\ell$ and $\ell'$.}
\end{itemize}
Let $Y$ denote the closed image of $X$. Geometrically, it is a complete intersection of 
three bidegree $(1,1)$ forms in $\bP^3 \times \bP^3$ with two ordinary threefold double
points $y_1,y_2 \in Y$, the images of $m_1$ and $m_2$.
Note that 
\begin{equation} \label{eqn:one}
\deg(Y) = \binom{6}{3} = 20, Y \subset \bP^{12}.
\end{equation}
\end{rema}

The pencil of hyperplane sections
of $X$ containing the cycle of rational curves
$$\ell \cup m_1 \cup \ell' \cup m_2 $$
gives a rational map
$$\varphi: X \dashrightarrow \bP^1$$
that may be factored a follows.

\

\paragraph{\em Step 1} Let $X_1 \ra X$ denote the blow up of
$\ell$ and $\ell'$ with exceptional divisors $\hat{E}$ and $\hat{E}'$,
each isomorphism to $\bP^1 \times \bP^1$. 
Note that 
$$(\hat{E})^3=(\hat{E}')^3=0.$$
The proper transforms $\hat{m}_1$ and $\hat{m}_2$ have 
normal bundles $\cO_{\bP^1}(-1) \oplus \cO_{\bP^1}(-1)$. 
\

\paragraph{\em Step 2} Let $X_2 \ra X_1$ denote the blow up of 
$\hat{m}_1$ and $\hat{m}_2$ with exceptional divisors 
$F_1$ and $F_2$. We have 
$$(F_1)^3 = (F_2)^3 = 2$$
from the normal bundle computation.
Let $\tilde{E}$ and $\tilde{E}'$ denote the 
proper transforms of $\hat{E}$ and $\hat{E}'$. 

\

\paragraph{\em Step 3} Let $g$ denote the pull-back of the hyperplane
class of $X$ to $X_2$. We claim that
$$M = g - \tilde{E} - \tilde{E}' - F_1 - F_2$$
is basepoint free. We analyze it on each exceptional divisor.

We can write
$$\Pic(\tilde{E}) = \bZ g + \bZ e + \bZ f_1 + \bZ f_2,$$
where $e=-\tilde{E}|\tilde{E}$ and $f_i=F_i|\tilde{E}$,
with nonzero intersections
$$g\cdot e = 1, f_1^2=-1, f_2^2=-1.$$
We have $\tilde{E}$ is a sextic del Pezzo surface and $M|\tilde{E}$
induces a conic bundle, which is basepoint free.
We can write 
$$\Pic(F_1) = \bZ g + \bZ \eta$$
with nonzero intersections $g\eta = 1$. 
We have
$$M|F_1 = g - 2g + (g+\eta) = \eta.$$
Thus $F_1$ and $F_2$ are collapsed to $\bP^1$'s in the `opposite' direction.

Note the further nonvanishing intersection numbers
$$g(\tilde{E})^2= g(\tilde{E}')^2 = g(F_1)^2 = g(F_2)^2 = -1 $$
and 
$$\tilde{E}F_1^2 = \tilde{E}F_2^2 = \tilde{E}' F_1^2 = \tilde{E}' F_2^2=-1.$$
Thus we have
$$(g-\tilde{E} - \tilde{E}' - F_1 - F_2)^3 = 4 -3 -3 -3 -3 + 3 +3 + 3 + 3 -2 -2 =0.
$$

\

\paragraph{{\em Step 4}} We interpret this map as flopping $\tilde{m}_1$ and
$\tilde{m_2}$, to get
$$\varphi': X'_1 \ra \bP^1.$$
The corresponding lines $m_1$ and $m_2$ in our pencil of
hyperplane sections are contracted, yielding fibers
that are sextic del Pezzo surfaces.

\

\paragraph{{\em Step 5}} Since 
$$\chi(X'_1)=\chi(X_1)=4-4+2+2=4,$$
assuming $\varphi'$ is degenerate along
fibers with one ordinary double point,
the degeneracy locus $\Delta  \subset \bP^1$
satisfies
$$ 4 = (2-|\Delta|)6 + 5|\Delta| \text{ whence } |\Delta|=8.$$
Our fibration thus has eight singular fibers.

 \
 
\paragraph{{\em Geometric description}}
The Galois representation associated with $\varphi'$ 
$$\rho: \operatorname{Gal}(k(\bP^1)) \ra \fS_2 \times \fS_3$$
may be interpreted as follows. The distinguished quadratic
extension associated to the blow-up realizations over $\bP^2$
is induced by $L/k$, the field of definition of $\ell$ and $\ell'$.

We analyze conics in the fibers of $\varphi'$.
Their proper transforms $Z\subset X$ are incident
to $\ell$ and $\ell'$ but disjoint
from $m_1$ and $m_2$. They are parametrized by a surface $T$ fibered in
conics over a genus-two curve $C'$, which admits a morphism 
$$C' \ra C \hookrightarrow  \Pic^1(C)$$
and a triple cover $C' \ra \bP^1$ with eight degenerate fibers.
It follows that $C'\simeq C$ and the latter curve admits a degree-three
morphism to $\bP^1$.

\subsection{Application to rationality}
\label{subsect:ATR}
Retain the set-up of Construction~\ref{cons:dP6}, which yields a
fibration $\varpi:Y \ra \bP^1$ in sextic del Pezzo surfaces
where $Y$ is birational to $X$ over $k$.
The generic fiber of $\varpi$ is rational over $k(\bP^1)$ if and only if
$\varpi$ admits a section.
Indeed, a sextic del Pezzo surface over a field is rational whenever
it admits a point \cite[Cor.~1 to Thm.~3.10]{ManinIHES}.
It follows that $X$ is also rational over $k$.

However, such a section is the proper transform of a curve $R\subset X$
of odd degree -- the hyperplanes in the pencil meet the
section in one point outside $\{\ell,m_1,\ell',m_2\}$ and $R$ meets 
these lines in pairs of conjugate points.
Assuming that $R$ is smooth, Theorem~\ref{theo:secant} implies that $X$
actually contains a line (cf. Question~\ref{ques:CT}).

\section{Rational curves in higher degrees}
\label{sect:higher}
We have already given the structure of lines, conics, and twisted cubics on a smooth complete
intersection $X\subset \bP^5$ of two quadrics. 
We describe the rational normal quartic curves 
$$R\subset X_{\bar{k}}$$
by making reference to a rational parametrization
$$\rho: \bP^3 \dashrightarrow  X_{\bar{p}},$$
blowing up $C\subset \bP^3$ realized as a $(2,3)$ divisor
in a quadric surface $Q=\{F=0\}\subset \bP^3$ (see Section~\ref{sect:ratbylines}).
The preimage $\rho^{-1}(R)$ is also a rational quartic curve $R'$
with
$$R'\cap C = \{c_1,\ldots,c_8\}.$$
Thus we obtain a rational map
$$M_0(X,4) \dashrightarrow \operatorname{Sym}^8(C).$$

We claim this is generically finite of degree four.
Indeed, we write
$$I_{c_1,\ldots,c_8}(2)=\left< F, G\right>$$
and we have an elliptic quartic curve
$$E=\{F=G=0\}$$
and computing intersections in $Q$ yields
$$(E\cap C)_Q = \{c_1,\ldots,c_8,c_9,c_{10}\}.$$
Each $R'$ arising as above sits in a member of the pencil
$$Q' \in \{ sF + tG =0 \}$$
as a divisor of bidegree $(1,3)$. Rulings of quadrics in this pencil are 
parametrized by $E$, in which we have the equation
$$f_1+3f_2|E = \cO_E(c_1+\cdots+c_8)$$
or equivalently
$$2f_2 = \cO_E(c_1+\cdots+c_8)(-1).$$
This has four solutions with the structure of a principal
homogeneous space over $E[2]$.

The image of the proper transform of $Q'$ is obtained in two
steps
\begin{itemize}
\item{blow up $c_1,\ldots,c_{10}$;}
\item{pinch along the $E$ using the degree two morphism
$$E\ra \bP^1$$
induced by the ruling of $Q$ contracted by $\rho$.}
\end{itemize}
This is obtained by taking a quadric hypersurface section
$$\Sigma \subset X$$ 
containing $R$ and double along $\ell$ -- these form
a linear system of dimension
$$21-2-9-3-6=1.$$
We are using the fact that $N_{\ell/X}=\cO_{\bP^1}^2$ or 
$\cO_{\bP^1}(1)\oplus\cO_{\bP^1}(-1)$
to compute the conditions imposed by insisting that the quadric
is double along $\ell$. Thus $\Sigma$ is uniquely determined by $\ell$
and $R$.

\section{Irrationality via specialization of birational types}
\label{sect:toric}

\subsection{The toric example}
First, we work over an algebraically closed field.

Let $X$ be toric, isomorphic to
$$x_0x_1-x_2x_3=x_2x_3-x_4x_5=0,$$
with ordinary singularities
at the coordinate axes.
We may realize $X$ as the image of the rational map
\begin{align*}
    \bP^3 &\stackrel{j}{\dashrightarrow} X \subset \bP^5 \\
[y_0,y_1,y_2,y_3] &\mapsto [y_0y_1,y_2y_3,y_0y_2,y_1y_3,y_0y_3,y_1y_2]
\end{align*}
that blows up the coordinate points $p_1,p_2,p_3,p_4$ and 
collapses the lines $\ell_{12},\ldots,\ell_{34}$
joining them to singularities.
Let $\beta: \wX \ra X$ denote the resolution obtained by blowing up 
the coordinate axes, or equivalently, blowing up 
$p_1,\ldots,p_4$ and then the proper transforms of $\ell_{12},\ldots,\ell_{34}$.

Consider the the graph $X'$ of the standard Cremona transformation
\begin{align*}
  \iota:\bP^3 & \dashrightarrow  \bP^3 \\
  [y_0,y_1,y_2,y_3] & \mapsto  [1/y_0,1/y_1,1/y_2,1/y_3],
\end{align*}
whose indeterminacy is given by the scheme
$$\{y_1y_2y_3=y_0y_2y_3=y_0y_1y_3=y_0y_1y_2=0\}.$$
Letting $z_0,z_1,z_2,z_3$ be projective coordinates on the target projective space,
we have
$$X' = \{y_0z_0=y_1z_1=y_2z_2=y_3z_3 \} \subset \bP^3 \times \bP^3.$$
It has ordinary singularities at 12 points 
$$([1,0,0,0],[0,1,0,0]),([0,1,0,0],[1,0,0,0]),\ldots, $$
with three over each of the coordinate axes in the $\bP^3$ factors.

We have a diagram
\[
\xymatrix{
& & X'' \ar[ld]  \ar[rdd]  &  \\
& \hX \ar[d] \ar[rrd] &    &   \\
& \wX \ar[rd] \ar[ld]   &     & X' \\
\bP^3 & & X                 &   
}
\]
with arrows as follows:
\begin{itemize}
\item{$\hX \ra \wX$ blows up the proper transforms of the lines 
$\ell_{12},\ldots,\ell_{34}$;}
\item{$X'' \ra X'$ blows up the 12 ordinary double points;}
\item{$X''\ra \hX$ blows up the proper transforms of the 12 lines 
$L_{i;jk}$
in the exceptional divisors of $\wX \ra \bP^3$ connecting the 
proper transforms of $\ell_{ij}$ and $\ell_{ik}$;}
\item{$\hX \ra X'$ contracts the proper transform of the $L_{i;jk}$.}
\end{itemize}

In addition to the toric action,
there is an action of a group 
$$H \simeq \fS_4 \times \fS_2 \simeq (\bZ/2\bZ)^3 \rtimes \fS_3,$$
where the semidirect product is by permutation of the factors.
The $\fS_4$ symmetry is the permutation of the coordinates on $\bP^3$;
the $\fS_2$ symmetry is induced by the
standard Cremona transformation on $\bP^3$. Thus the action of $H$ on
$X'$ (and thus its resolution $X''$) is clear from the notation.
The $\fS_4$ action is clearly regular on $X$ as well; the regularity
of the Cremona involution on $X$ (and thus $\wX$)
reflects the commutativity of the diagram
\[
\xymatrix{
\bP^3 \ar@{-->}[r]^j \ar@{-->}[d]_{\iota} & X \ar[d]^{\phi} \\
\bP^3 \ar@{-->}[r]^j & X
}
\]
Here $\iota$ is the Cremona involution, $j$ the birational map between 
$\bP^3$ and $X$, and $\phi$ is the linear transformation
$$\phi(x_0,x_1,x_2,x_3,x_4,x_5)=(x_1,x_0,x_3,x_2,x_5,x_4)$$
that preserves $X$.

\subsection{The variety of lines}
\begin{prop}
\label{prop:ll}
Consider the variety of lines $F_1(X)$.
\begin{itemize}
\item{$\deg(F_1(X))=32$ ;}
\item{it has eight components isomorphic to $\bP^2$: 
$P_1,P_2,P_3,P_4$ and $P_{123},P_{124},P_{134},P_{234}$ ;}
\item{it has four components isomorphic to the degree six del Pezzo
surface: 
$S_1,\ldots,S_4$ ;}
\item{$S_1$ meets $P_2,P_{123},P_3,P_{134},P_4,P_{124}$
in a hexagonal anticanonical cycle;}
\item{$P_1$ meets $S_2,S_3,S_4$ in a
triangular anticanonical cycle;}
\item{$P_{123}$ meets $S_1,S_2,S_3$ in a  
triangular anticanonical cycle.}
\end{itemize}
\end{prop}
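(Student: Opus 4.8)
The plan is to classify the lines directly through the parametrization $j\colon\bP^3\dashrightarrow X$, and then to pin down the component count by a degree computation on $\Gr(2,6)$. I would first locate the two-planes contained in $X$. Since $X=\{x_0x_1=x_2x_3=x_4x_5\}$, a coordinate plane $\{x_i=x_j=x_k=0\}$ lies on $X$ exactly when $\{i,j,k\}$ meets each of the pairs $\{0,1\},\{2,3\},\{4,5\}$, yielding eight planes. Four are the images $\Pi_i=j(E_i)$ of the exceptional divisors $E_i$ over $p_1,\dots,p_4$ (e.g. $E_1\xrightarrow{\sim}\{x_1=x_3=x_5=0\}$, the span of the monomials divisible by $y_0$); the remaining four are the images $j(\langle p_i,p_j,p_k\rangle)$ of the coordinate planes of $\bP^3$, each carried by the standard quadratic Cremona map onto a plane of $X$. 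The lines in a fixed plane $\Pi\subset\bP^5$ form a $\bP^2$ whose Plücker coordinates are supported on three of the fifteen; this $\bP^2$ is therefore linearly embedded in $\Gr(2,6)$, of degree one. These are the components $P_1,\dots,P_4$ and $P_{123},\dots,P_{234}$.

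Next I would produce the del Pezzo components. As $j$ is given by the quadrics through $p_1,\dots,p_4$, a line through a single $p_i$ maps to a curve of degree $2-1=1$, i.e. a line of $X$; for $i=1$ the direction $[b:c:d]$ gives the line spanned by $(b,0,c,0,d,0)$ and $(0,cd,0,bd,0,bc)$. This family is parametrized by the $\bP^2$ of directions at $p_1$, except that the three directions toward $p_2,p_3,p_4$ produce the edges $\ell_{1j}$, which $j$ contracts; passing to the closure blows up these three points, so $S_1\simeq\Bl_3\bP^2$ is a sextic del Pezzo surface, and similarly for $S_2,S_3,S_4$. Pulling back the Plücker class, the condition that $\ell(b,c,d)$ meet a general $\bP^3$ is a plane cubic through the three blown-up directions, so $\cO(1)|_{S_i}=3H-E_1-E_2-E_3=-K_{S_i}$: each $S_i$ is anticanonically embedded of degree six.

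To see that these twelve surfaces exhaust $F_1(X)$, I would use that $F_1(X)$ is the zero scheme of a section of $\cE:=\Sym^2 S^*\oplus\Sym^2 S^*$, of rank six, on $\Gr(2,6)$, where $S$ is the tautological subbundle. Because a general point of $X$ lies on finitely many lines, $F_1(X)$ has the expected dimension two, and hence Plücker degree $\int_{\Gr(2,6)}c_6(\cE)\,\sigma_1^2$. A short Schubert-calculus computation gives $c_3(\Sym^2 S^*)=4\sigma_{21}$, so $c_6(\cE)=16\,\sigma_{21}^2$; combined with $\int_{\Gr(2,6)}\sigma_{21}^2\sigma_1^2=2$ this yields $\deg F_1(X)=32$. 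Since the eight planes (degree one) and four del Pezzo surfaces (degree six) already contribute $8+24=32$, there is no room for other components, and $F_1(X)$ is generically reduced along each.

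Finally, the incidence pattern follows by analyzing boundary curves. Distinct components meet only along pencils of lines through the six nodes of $X$ (the points $j(\ell_{ij})$) that lie in coordinate planes. For $S_1$, letting a direction tend to $p_j$ realizes the corresponding exceptional curve as the pencil of lines through $j(\ell_{1j})$ inside $\Pi_j$, hence a line of $P_j$; letting a direction lie in $\langle p_j,p_k\rangle$ realizes the proper transform of that line as a pencil inside $j(\langle p_1,p_j,p_k\rangle)$, hence a line of $P_{1jk}$. Reading off the hexagon of $(-1)$-curves on $S_1$ in cyclic order gives $P_2,P_{123},P_3,P_{134},P_4,P_{124}$. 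Dually, the three pencils of $P_1$ cut out by $S_2,S_3,S_4$ pass through the three nodes of $\Pi_1$ and form a triangle, as do the three pencils of $P_{123}$ cut out by $S_1,S_2,S_3$; the action of $H\simeq\fS_4\times\fS_2$ reduces all these checks to one representative per orbit. The main obstacle is the completeness step: one must verify that $F_1(X)$ has the expected dimension everywhere—so that $c_6(\cE)$ really computes the degree—and that no component carries a multiplicity. It is precisely the numerical coincidence $8\cdot1+4\cdot6=32$ that secures both.
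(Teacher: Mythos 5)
Your proposal is correct and follows essentially the same route as the paper: the same identification of the eight planes and of the four sextic del Pezzo components via the parametrization $j$, the same Schubert-calculus computation of $\deg F_1(X)=32$ on $\Gr(2,6)$, the same completeness argument from the degree count $8\cdot 1+4\cdot 6=32$, and the same boundary analysis for the hexagonal and triangular anticanonical cycles (the paper phrases purity of dimension as a direct computation that $F_1(X)$ is smooth of dimension two away from the $24$ lines through the nodes, where you instead combine the lower bound from the rank-six bundle $\Sym^2 S^*\oplus\Sym^2 S^*$ with finiteness of lines through a general point). One small caution: your closing sentence overstates what the coincidence $8+24=32$ buys -- it rules out extra components and multiplicities only \emph{after} purity of dimension is known (otherwise $c_6$ does not compute the cycle class), but since you establish purity earlier via the general-point argument, the logic as a whole is sound.
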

\begin{proof}
First, note that $X$ contains eight planes
$$\{x_0=x_2=x_4=0\}, \{x_0=x_2=x_5=0\} ,\ldots ,\{x_1=x_3=x_5=0\},
$$
each containing three of the ordinary double points.
The lines in these planes are the eight $\bP^2$'s in $F_1(X)$. These may be interpreted
via $j$ as:
\begin{itemize}
\item{$P_i$ -- corresponding to lines in the exceptional divisors over the
$p_i$;}
\item{$P_{ijk}$ -- corresponding to conics in $\bP^3$ containing $p_i,p_j,p_k$.}
\end{itemize}
A direct computation shows that $F_1$ is singular
along those lines meeting the six singularities. These form $24$
lines in $F_1(X)$, four associated with each singularity and three lying
on each of the eight planes.
We write
\begin{itemize}
\item{$S_i$ -- corresponding to the lines in $\bP^3$ through $p_i$.}
\end{itemize}
These are parametrized by the blow-up of $\bP^2$ at through noncollinear points, 
a sextic del Pezzo surface, cf.~\cite[Th.~1.10]{AK}. Note that the distinguished hexagon of lines in $S_1$ 
coincides with its intersections with the six planes indicated.  

On the other hand, a computation shows that $F_1(X)$ is smooth of dimension two
at all lines {\em not} meeting to the singularities. In particular, $F_1(X)$ has pure dimension two.   

The statement on the degree of $F_1(X)$ can be obtained via Schubert calculus on the Grassmannian
$\Gr(2,6)$.  It reflects the fact that $\cO_{\Gr(2,6)}(1)|F_1(X)$ is four times the
principal polarization. 

It remains to show that the enumerated lines cover all the lines on $X$. The sum of the degrees of the 
$S_i$ equals $24$; the sum of the degrees of the $\bP^2$ components is $8$. Thus we conclude $F_1(X)$
is the union of these $12$ surfaces.   
\end{proof}

Our notation is chosen compatibly with the action of $\fS_4\times \fS_2$. The first factor permutes the indices. The second 
interchanges $P_i$ and $P_{jkl}$ where $\{i,j,k,l\}=\{1,2,3,4\}$.

\subsection{Galois cohomology}
Rationality of a 3-dimensional torus $T$ is governed by the Galois action on its lattice of characters 
$\mathfrak X^*(T)$; the action factors through a subgroup $G\subset H$, where 
$$
H\simeq \mathfrak S_4\times \mathfrak S_2 \subset \mathrm{GL}_3(\bZ) = \Aut(\mathfrak X^*(T)). 
$$
The main result of \cite{kun} is:
\begin{itemize}
\item A  3-dimensional torus is nonrational if and only if $G$ contains a subgroup 
$U_1\simeq \mathfrak  S_2\times \mathfrak S_2$, see \cite[Section 3]{kun}. 
\end{itemize}
In particular, all  $T$ with $G\subset \mathfrak S_4$ are rational \cite[Lemma 1]{kun}. 
Explicitly, the generators of $H$ are given as matrices 
$$
a:=\left( \begin{array}{rrr} 0 & 1 & 0 \\ 0 & 0 & 1\\ -1 & -1 & -1\end{array}   \right), \,
b:=\left( \begin{array}{rrr} 0 & 1 & 0 \\ 1 & 0 & 0\\ 0 & 0 & 1\end{array}   \right), \,
c:= \left( \begin{array}{rrr} -1 & 0 & 0 \\ 0 & -1 & 0\\ 0 & 0 & -1\end{array}   \right),
$$
These satisfy the relations
$$
a^4 =b^2=(ab)^3=1, c^2=1,
$$ 
where $c$ is central.  The generators of $U_1$ are given in \cite[Thm. 1]{kun}, they are (modulo a change of basis)
$$
a^2 = \left( \begin{array}{rrr} 0 & 0 & 1 \\ -1 & -1 & -1\\ 1 & 0 & 0\end{array}   \right), \quad
bc= \left( \begin{array}{rrr} 0 & -1 & 0 \\ -1 & 0 & 0\\ 0 & 0 & -1\end{array}   \right),
$$
A full list of nonrational three-dimensional tori is in \cite[Thm. 1]{kun}. 

\begin{rema}
\label{rema:lines}
Assume that the Galois group acts transitively on the set of 4 coordinate points $p_1,\ldots, p_4\in \bP^3$, giving rise to $X$ as above.
Then $X$ is rational but does not contain lines defined over $k$, by the description in Proposition~\ref{prop:ll}.  
\end{rema}

\begin{rema}
The group $U_1$ may be realized as a subgroup of $H$, e.g., by taking $c$ as the generator of the 
second factor, $a=(1 2 3 4)$ and $b=(1 2)$. This group does not fix any of the irreducible components of $F_1(X)$.
On the other hand, the Cremona involution acts on the $S_i$ by taking inverses and thus stabilizes points not on hexagon, i.e., lines 
on $X$ not meeting singularities.  
\end{rema}

\subsection{Construction of nonrational examples} \label{subsect:nonrat}
Since the action of $H$ is regular on the toric variety $X$, we can use it to obtain a twisted model of ${}^{\rho}\!X$ and the corresponding torus over a nonclosed field, provided there is a representation $\rho:\Gal(k) \ra H$.
For example, let $K$ be a cubic extension over $k$ and $L$ and quadratic extension over $K$; assume that the Galois closures
of $K$ and $L$ over $k$ have Galois groups $\fS_3$ and $H$ respectively. Let $z$ be an indeterminate in $K$ and 
$\theta\in K$ a primitive element; the equations
$$\Tr_{K/k}(z)=\Tr_{K/k}(\theta z) =0$$
are independent. Let $x$ be an indeterminate in $L$ satisfying $\Nm_{L/K}(x)=z$. The equations
$$\Tr_{K/k}( \Nm_{L/K}(x))=\Tr_{K/k}(\theta \Nm_{L/K}(x))=0$$
are homogeneous and define a locus on $\bP (\mathbf{R}_{L/k}\bA^1) \simeq \bP^5$ geometrically isomorphic to
$X$. 

The resulting variety is one of the minimal nonrational toric threefolds considered in \cite[Ex.~6.2.1]{vosk}.

\subsection{Specialization of birational types}
We will use a special case of \cite[Th.~16]{KT}:
\begin{theo} \label{theo:KT}
Let $k$ be a field, $K=k((\tau))$, $B=\operatorname{Spec}(k[[\tau]])$, and 
$\cX\ra B$ a flat projective morphism from a scheme smooth over $k$ with closed fiber
$\cX_0$. We assume that
\begin{itemize}
\item the generic fiber $\cX_{\tau}$ is smooth and rational (resp.~stably rational) over $K$;
\item $\cX_0$ is geometrically irreducible and reduced;
\item $\cX_0$ is singular along a subscheme $Y$ that is smooth over $k$;
\item the blowup $\beta: \Bl_Y(\cX_0) \ra \cX_0$ resolves the singularities of $\cX_0$;
\item the exceptional divisor $D$ of $\beta$ is smooth over $Y$ and rational over $k(Y)$.
\end{itemize}
Then $\cX_0$ is rational (resp.~stably rational) over $k$.
\end{theo}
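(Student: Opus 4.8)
Since the assertion is extracted from \cite[Th.~16]{KT}, the plan is to feed our data into the Kontsevich--Tschinkel specialization homomorphism for birational types and to unwind its output; essentially all of the difficulty is absorbed into that cited construction, and I will indicate where it lies rather than reconstruct it.

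Recall the shape of the homomorphism. To a smooth projective $K$-variety $Z$ one attaches, via any regular projective model $\cZ \ra B$ with reduced special fiber, a class $\rho(Z)$ in the Burnside-type ring freely generated by (stable) birational types of $k$-varieties. The class $\rho(Z)$ is assembled from the stratification of $\cZ_0$ by its singularities: the open dense stratum contributes the birational type of $\cZ_0$, while each deeper stratum $S$ contributes a term built from the birational type of $S$ decorated by the birational type over $k(S)$ of the exceptional fibre produced when one resolves $\cZ_0$ along $S$. The content of \cite{KT} is that $\rho(Z)$ depends only on the (stable) birational type of $Z$ over $K$. \textbf{This invariance is the step I expect to carry all the weight:} it is proved by connecting any two regular models through blow-ups and blow-downs along smooth centres (weak factorisation) and checking, modification by modification, that the combinatorial formula for $\rho$ is unchanged; the bookkeeping for non-reduced special fibres, via the $\mu_m$-covers attached to the multiplicities, is the delicate part.

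Granting the homomorphism, the rest is a matter of identifying the two relevant classes. Apply $\rho$ to $\cX_\tau$ using the given model $\cX \ra B$. By hypothesis $\cX$ is smooth over $k$, and $\cX_0$ is geometrically irreducible, reduced, and singular exactly along the smooth subscheme $Y$, with the single blow-up $\beta\colon \Bl_Y(\cX_0)\ra \cX_0$ resolving it; thus $\cX_0$ has only two strata, its smooth locus and $Y$. Concretely one can realise the computation by blowing up $Y$ inside the smooth total space $\cX$, reaching a normal-crossings model (the hypotheses on $\beta$ and $D$ guarantee the transversality) whose special-fibre strata are the resolution $\Bl_Y(\cX_0)$, birational to $\cX_0$, and the exceptional divisor over $Y$, whose fibres are governed by $D$. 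The smooth locus contributes $[\cX_0]$, and the $Y$-stratum contributes a term decorated by the birational type of $D$ over $k(Y)$. Since $D$ is smooth over $Y$ and \emph{rational} over $k(Y)$ by assumption, this decoration is trivial and the $Y$-term does not obstruct rationality, so $\rho(\cX_\tau)=[\cX_0]$ in the Burnside ring.

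On the other hand $\cX_\tau$ is rational (resp.\ stably rational) over $K$, hence (stably) birational to $\bP^n_K$. Evaluating $\rho$ on the trivial model $\bP^n_B \ra B$, whose special fibre $\bP^n_k$ is smooth with a single stratum, gives $\rho(\bP^n_K)=[\bP^n_k]=[\mathrm{pt}]$. By the (stable) birational invariance of $\rho$ we conclude $[\cX_0]=[\mathrm{pt}]$, i.e.\ $\cX_0$ is rational (resp.\ stably rational) over $k$. Beyond the black-box invariance, the only point demanding genuine care is verifying that the $Y$-stratum contribution is exactly the link data trivialised by the rationality of $D$, and that the multiplicity of $\cX_0$ along $Y$ enters only through $\mu_m$-terms that the reducedness of $\cX_0$ and the smoothness of $\cX$ keep harmless.
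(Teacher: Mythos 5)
Your proposal is correct and takes essentially the same route as the paper: both pass to the blow-up $\cX'=\Bl_Y(\cX)$ of the smooth total space, compute the special fiber's class in the Kontsevich--Tschinkel Burnside formalism, and cancel the exceptional contributions using that $E\simeq \bP(N_{Y/\cX})$ and $D$ are both rational over $k(Y)$, leaving only the class of $\Bl_Y(\cX_0)\sim \cX_0$. The only (inessential) difference is packaging: the paper records this computation as the verification $\partial_{\cX'_0}(\cX')=[E]+[\Bl_Y(\cX_0)]-[D\times\bA^1]=[\Bl_Y(\cX_0)]$ of the $B$-rational-singularities hypothesis and then cites \cite[Th.~16]{KT}, whereas you inline that theorem's proof by comparing $\rho(\cX_\tau)$ with the value of $\rho$ on the trivial model $\bP^n_B$.
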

Our assumptions mean that the proper transform of $\cX_0$ meets the exceptional divisor of $\Bl_Y(\cX)$
transversally.
\begin{proof}
It suffices to verify that the pair $(\cX,\cX_0)$ has $B$-rational singularities. Our argument is an extension
of \cite[Ex.~13]{KT}. 

Let $\cX'=\Bl_Y(\cX)$ with exceptional divisor $E\simeq \bP(N_{Y/\cX})$; $\cX'_0$ is a normal crossings
divisor and write $D$ for the intersection of $E$ with the proper transform of $\cX'_0$, i.e., the exceptional
locus of $\beta$. 
Using \cite[\S 4]{KT}, we write
\begin{align}
\partial_{\cX'_0}(\cX') = &[E \ra \cX'_0] +[\Bl_Y(\cX_0)\ra \cX'_0] - [D \times \bA^1\ra \cX'_0] \\
= &[\Bl_Y(\cX_0)\ra \cX'_0],
\end{align}
where the cancellation comes from definition of the Burnside group and 
the fact that $E$ and $D$ are both rational over $k(Y)$. We conclude that
$$\partial_{\cX_0}(\cX) = [\cX_0^{\operatorname{smooth}} \hookrightarrow \cX_0],$$
the desired condition on the singularities.
\end{proof}

We apply this to the examples of Section~\ref{subsect:nonrat}:
\begin{theo}
Let $k$ be a function field of a complex curve, $\cX_0$ a nonrational toric complete intersection 
of two quadrics over $k$, and $\cX \ra B$ a deformation of $\cX_0$ with smooth total space. Then 
the generic fiber of $\cX_{\tau}$ is not (stably) rational over $k((\tau))$. 
\end{theo}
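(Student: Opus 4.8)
The plan is to run Theorem~\ref{theo:KT} in reverse. That result asserts that if the generic fiber $\cX_{\tau}$ is rational (resp.\ stably rational) over $K=k((\tau))$ and the geometric hypotheses (2)--(5) hold for $\cX_0$, then $\cX_0$ is rational (resp.\ stably rational) over $k$. Taking the contrapositive, the nonrationality of $\cX_0$ forces the nonrationality of $\cX_{\tau}$, provided hypotheses (2)--(5)---none of which mention the generic fiber---are verified. So the whole task reduces to checking those four geometric conditions for the twisted toric model $\cX_0={}^{\rho}X$ of Section~\ref{subsect:nonrat}.

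First I would pin down the singular locus. A direct Jacobian computation on the model $x_0x_1-x_2x_3=x_2x_3-x_4x_5=0$ shows that the singularities are exactly the six coordinate points of $\bP^5$ lying on $X$, each an ordinary double point; these are the images under $j$ of the collapsed lines $\ell_{12},\ldots,\ell_{34}$. In the twisted model the Galois action merely permutes these six nodes, so the reduced singular locus $Y$ is a finite \'etale $k$-scheme, in particular smooth over $k$. Since $\cX_0$ is geometrically isomorphic to the irreducible, reduced complete intersection $X$, hypothesis (2) holds. Blowing up the node scheme $Y$ replaces each ordinary double point by its projectivized tangent cone, a smooth quadric surface, so $\beta:\Bl_Y(\cX_0)\to\cX_0$ resolves the singularities with exceptional divisor $D$ smooth over $Y$: this gives hypotheses (3)--(4). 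The transversality requirement recorded after Theorem~\ref{theo:KT} follows from the ordinariness of the nodes together with the smoothness of the total space $\cX$, which exhibits the family as a nondegenerate one-parameter smoothing in local coordinates.

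The crux is hypothesis (5): $D$ must be \emph{rational}---not merely geometrically rational---over $k(Y)$. Each fiber of $D\to Y$ is the smooth quadric surface cut out by a nondegenerate rank-four quadratic form over a residue field of $Y$. Here the standing assumption that $k$ is the function field of a complex curve is decisive: $k$ is a $C_1$ field by Tsen's theorem, and the residue fields of the finite scheme $Y$ are again function fields of complex curves, hence also $C_1$. A quadratic form in four variables over a $C_1$ field has a nontrivial zero, so each exceptional quadric acquires a $k(Y)$-point and is therefore $k(Y)$-rational. This establishes (5) and completes the verification, so the contrapositive of Theorem~\ref{theo:KT} yields that $\cX_{\tau}$ is nonrational over $K$. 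I expect this to be the main obstacle, as it is the only place where the arithmetic of the base field enters and where ``geometrically rational'' must be upgraded to ``rational.''

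For the parenthetical stably rational assertion the identical contrapositive applies: if $\cX_0$ is non-stably-rational then so is $\cX_{\tau}$. One should note, however, that this is a genuine extra requirement on $\cX_0$, since a nonrational torus need not be non-stably-rational---the norm-one torus of a biquadratic extension is the classical nonrational but stably rational example, and its group is precisely $U_1\simeq\fS_2\times\fS_2$. Thus the stable conclusion requires selecting $\cX_0$ whose nonrationality is detected by a stable birational invariant, for instance a nonvanishing first Galois-cohomology group of the geometric Picard lattice of the smooth model $\Bl_Y(\cX_0)$; with such a choice, Theorem~\ref{theo:KT} delivers the non-stable-rationality of $\cX_{\tau}$.
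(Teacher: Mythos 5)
Your main argument coincides with the paper's own proof: the paper also proceeds by the contrapositive of Theorem~\ref{theo:KT}, taking $Y$ to be the six ordinary double points of the twisted toric model, so that $D \ra Y$ is a smooth quadric surface fibration over finite extensions of the function field of a complex curve, and invoking the Tsen--Lang theorem to produce points on the exceptional quadrics and hence their rationality over $k(Y)$ --- exactly your $C_1$ step, which you correctly single out as the only place where the arithmetic of $k$ enters and where geometric rationality must be upgraded to rationality.

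However, your final paragraph contains a genuine error, and as written it leaves the ``stably rational'' half of the statement unproved. The norm-one torus of a biquadratic extension is \emph{not} stably rational: for $T=R^1_{L/k}\mathbb{G}_m$ with $\Gal(L/k)\simeq(\Z/2)^2$, a smooth compactification $X_c$ has $H^1\bigl(k,\Pic(\bar{X}_c)\bigr)\simeq H^3\bigl((\Z/2)^2,\Z\bigr)\simeq\Z/2$ (Colliot-Th\'el\`ene--Sansuc), a nonvanishing stable birational invariant; this class is precisely what underlies the classical failures of the Hasse principle for biquadratic norm equations. (The classical nonrational-but-stably-rational examples of Beauville--Colliot-Th\'el\`ene--Sansuc--Swinnerton-Dyer are of Ch\^atelet type, not three-dimensional tori.) In fact, by Kunyavskii's classification \cite{kun}, a three-dimensional torus is stably rational if and only if it is rational: in every nonrational case the flasque class of the character lattice is nontrivial. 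Hence the hypothesis ``nonrational'' in the theorem already guarantees that $\cX_0$ is not stably rational, the contrapositive of Theorem~\ref{theo:KT} applies verbatim in the stable setting, and no additional selection of $\cX_0$ is required. Deleting the spurious caveat and substituting this observation makes your proof complete and in full agreement with the paper's.
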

Note that we can easily choose the deformation so that $\cX\ra B$ admits sections, e.g., by choosing the
deformed quadratic equations to vanish at a smooth point of $\cX_0$. This construction yields examples over function
fields of complex surfaces that are irrational yet admit rational points. 
\begin{coro} \label{coro:main2}
There exist examples of smooth complete intersections of two quadrics in $\bP^5$ over $\bC(t,\tau)$ that
are not rational (or stably rational) but admit rational points.
\end{coro}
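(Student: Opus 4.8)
The plan is to specialize the theorem preceding this corollary to the base field $k=\bC(t)$, and then transfer the resulting irrationality statement from the Laurent field $\bC(t)((\tau))$ down to the subfield $\bC(t,\tau)$, all while keeping track of a rational point. Since $\bC(t)$ is the function field of the complex affine line, it qualifies as a function field of a complex curve, so the hypotheses of the preceding theorem become available as soon as we produce a suitable degenerating family with smooth total space (and $\bC(t,\tau)=\bC(t)(\tau)$ is then the function field of a complex surface, matching the final remark).

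First I would fix a nonrational toric complete intersection $\cX_0$ of two quadrics over $k=\bC(t)$, exactly as in Section~\ref{subsect:nonrat}: choose a cubic extension $K/k$ and a quadratic extension $L/K$ so that the Galois action factors through $\rho:\Gal(k)\ra H$ with image containing the subgroup $U_1\simeq \fS_2\times\fS_2$ that forces nonrationality. Writing the two defining quadrics over $k$ as $Q_0,Q_1$, I would then form the one-parameter family
$$
\cX=\{Q_0+\tau Q_0'=Q_1+\tau Q_1'=0\}\subset \bP^5_B,\qquad B=\operatorname{Spec}(k[[\tau]]),
$$
with $Q_0',Q_1'$ chosen generically among quadrics over $k$. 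For a generic such choice the total space $\cX$ is smooth over $k$ and the generic fiber $\cX_\tau$ is a smooth complete intersection of two quadrics over $k((\tau))$, while the closed fiber is the prescribed nonrational $\cX_0$. The preceding theorem then yields that $\cX_\tau$ is not rational---indeed not stably rational---over $k((\tau))$.

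To descend to $\bC(t,\tau)$ I would take the perturbation polynomial in $\tau$, so that $\cX_\tau$ is the base change of a smooth complete intersection $X$ of two quadrics defined over $\bC(t,\tau)$ by the same equations. Because (stable) rationality is preserved under the field extension $\bC(t,\tau)\hookrightarrow \bC(t)((\tau))$ furnished by Laurent expansion, the non-(stable-)rationality of $\cX_\tau$ over the larger field forces non-(stable-)rationality of $X$ over $\bC(t,\tau)$; smoothness of $X$ descends from that of $\cX_\tau$ along the same extension. To guarantee $X(\bC(t,\tau))\neq\emptyset$ I would constrain $Q_0',Q_1'$ to vanish at the barycentric point $p_0=[1:\cdots:1]$. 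This point lies in the open torus, hence is a smooth point of $\cX_0$, and it is invariant under the full geometric symmetry group $H=\fS_4\times\fS_2$ (the $\fS_4$ permutes the six coordinates and the Cremona $\fS_2$ interchanges complementary pairs, both fixing the equal-weight point). Being defined over the prime field, $p_0$ is fixed by the twisted Galois action and so defines a $k$-rational point; imposing its vanishing is a single linear condition on each of $Q_0',Q_1'$, and it persists as a constant section $p_0\in\cX(k)\subset X(\bC(t,\tau))$.

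The main obstacle is to exhibit a deformation $\cX\ra B$ with \emph{smooth} total space, as demanded by the preceding theorem, simultaneously with the constant section $p_0$. Concretely, one must check that a generic perturbation $(Q_0',Q_1')$ renders $\cX$ smooth over $k$---equivalently, that the smoothing is transverse to the singularities of $\cX_0$ along the (twisted) coordinate axes---which is the versal-smoothing transversality computation at the toric singular points and is where the genericity of $(Q_0',Q_1')$ is essential. Imposing the one linear condition that $Q_0',Q_1'$ vanish at the smooth point $p_0$, which lies far from the singular locus, leaves ample freedom for this genericity, so the two requirements are compatible. The verification that $\cX_0$ itself meets the hypotheses of Theorem~\ref{theo:KT}---its singular locus $Y$ smooth over $k$ and the blowup $\Bl_Y(\cX_0)$ resolving the singularities with exceptional divisor rational over $k(Y)$, as encoded in the resolution $\beta:\wX\ra\cX_0$ of Section~\ref{sect:toric}---has already been absorbed into the preceding theorem, so it need not be redone here.
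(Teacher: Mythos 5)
Your proposal is correct and follows essentially the same route as the paper: degenerate over $k=\bC(t)$ to the twisted nonrational toric $\cX_0$, invoke the specialization theorem (whose hypotheses the paper itself checks via the six ordinary double points, the versal smoothing, and Tsen--Lang rationality of the exceptional quadric surface bundle over $k(Y)$), arrange a section through a smooth point, and descend non-(stable-)rationality from $\bC(t)((\tau))$ to $\bC(t,\tau)$. Your explicit $H$-fixed barycentric point $[1:\cdots:1]$ is a valid concretization of the paper's remark that one may ``choose the deformed quadratic equations to vanish at a smooth point of $\cX_0$,'' and your descent step, which the paper leaves implicit, is spelled out correctly.
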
 
\begin{proof}
Here $\cX_0$ is the twisted form of our toric complete intersection of two quadrics. We take $Y$ to be its six ordinary double points. A deformation that is versal for these singularities has smooth total space. Thus
$D\ra Y$ is a smooth quadric surface fibration over a complex curve. The Tsen-Lang theorem shows that any such fibration is rational, so the hypotheses of Theorem~\ref{theo:KT} are satisfied.   
\end{proof}

\begin{rema}
Toric degenerations of Fano threefolds have attracted attention in connection 
with mirror symmetry and the theory of Landau-Ginzburg models.
We expect that the technique presented here will permit the construction 
of numerous nonrational but geometrically rational smooth Fano threefolds.
\end{rema}

\section{Irrationality via decomposition of the diagonal}
\label{sect:irr}

Here we establish the following result, which answers a question
of Colliot-Th\'el\`ene from 2005:

\begin{theo} 
\label{theo:main}
There exist smooth complete intersections of two quadrics $X\subset \bP^5$
over the field $k=\bC(t)$ that fail to be stably rational over $k$.
\end{theo}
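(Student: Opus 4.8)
The plan is to recast the failure of stable rationality over $k=\bC(t)$ as the failure of stable rationality of an associated quadric surface bundle fourfold over $\bC$, to which the techniques of \cite{HPT16} apply, and then to descend the conclusion back to the generic fiber.

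\emph{Step 1: the quadric surface bundle.} Fix a smooth complete intersection $X\subset\bP^5$ of two quadrics over $k=\bC(t)$. Since $\bC(t)$ is a $C_1$-field we have $X(k)\neq\emptyset$, so Construction~\ref{cons:point} applies: double projection from a rational point exhibits $X$ as birational over $k$ to a quadric surface bundle $q:X'\ra\bP^1_s$ with six degenerate fibers. The coefficients of $q$ lie in $\bC(t)$; clearing denominators and spreading out over $\bC$ realizes $X'$ as the fiber, over the generic point of $\bP^1_t$, of a quadric surface bundle
$$\cQ\ra \bP^1_s\times\bP^1_t=\bP^1\times\bP^1$$
defined over $\bC$. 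Its degeneration curve $\Delta\subset\bP^1\times\bP^1$ meets a generic ruling $\bP^1_s\times\{t\}$ in the six points coming from the degenerate fibers of $q$, so $\Delta$ has controlled bidegree.

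\emph{Step 2: reduction to a statement over $\bC$.} I claim that $X$ fails to be stably rational over $k$ as soon as $\cQ$ fails to be stably rational over $\bC$. Indeed, a stable $k$-birational equivalence $X\times_k\bP^N_k\sim\bP^{3+N}_k$ spreads out over the rational base $\bP^1_t$ to a $\bC$-birational equivalence exhibiting $\cQ\times\bP^N$ as rational over $\bC$; hence stable rationality of $X$ over $k$ would force stable rationality of $\cQ$ over $\bC$. It therefore suffices to prove that $\cQ$ is not stably rational over $\bC$.

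\emph{Step 3: the obstruction, after \cite{HPT16}.} Here I invoke the specialization method. Within the family of quadric surface bundles over $\bP^1\times\bP^1$ arising from complete intersections as in Step~1, I would exhibit a special member $\cQ_0$ such that: (i) the quadratic form of its generic fiber, via its even Clifford algebra and discriminant double cover, yields a nonzero unramified class in $H^2_{\mathrm{nr}}(\bC(\cQ_0),\bZ/2)$, and hence a nonzero unramified Brauer class, computed as a residue along the components of $\Delta$; and (ii) $\cQ_0$ admits a universally $\CH_0$-trivial resolution of singularities. Granting (i) and (ii), the theorem of Voisin and Colliot-Th\'el\`ene--Pirutka shows that a very general member of the family admits no integral decomposition of the diagonal and is therefore not stably rational over $\bC$. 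Each member of the family is the total space attached to a complete intersection $X_b$ over $\bC(t)$, so choosing $b$ very general produces a smooth $X_b$ over $\bC(t)$ whose associated $\cQ_b$ is not stably rational, and Step~2 then finishes the proof.

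The main obstacle is Step~3: one must produce an explicit degenerate bundle $\cQ_0$ whose even Clifford algebra carries a nontrivial unramified residue \emph{and} whose natural resolution is universally $\CH_0$-trivial, all while respecting the numerical constraints forced by the construction (the six degenerate fibers pin down the bidegree of $\Delta$, so $\cQ_0$ is not free to be an arbitrary quadric surface bundle over $\bP^1\times\bP^1$). The residue computation is in the spirit of Colliot-Th\'el\`ene--Ojanguren and Pirutka, and the $\CH_0$-triviality must be checked fiber-by-fiber over the singular locus of $\cQ_0$; this is precisely where the techniques of \cite{HPT16} are used. By contrast, Steps~1 and~2 are essentially formal.
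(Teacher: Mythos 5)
Your Steps 1 and 2 coincide with the paper's reduction: double projection from a rational point (which exists by Tsen--Lang), spreading out over $\bP^1_t$ to a quadric surface bundle over $\bP^1\times\bP^1$ with degeneracy curve of bidegree $(4d+2,6)$ (the paper takes $d=1$, giving $(6,6)$), and the formal implication that stable rationality of $X$ over $\bC(t)$ forces stable rationality of the fourfold over $\bC$. But your Step 3 --- which you yourself flag as ``the main obstacle'' --- is exactly the content of the paper's proof, and you leave it conditional (``I would exhibit a special member $\cQ_0$ such that\dots''). Without producing $\cQ_0$ the argument proves nothing, and the difficulty you correctly identify is real: the paper's parameter count ($25d+14$ parameters for the determinantal bundles versus $28d+14$ for general $(4d+2,6)$ curves) shows that the bundles arising from complete intersections of two quadrics form a codimension-\textbf{three} locus among $(6,6)$-degenerate quadric surface bundles, so one cannot invoke the very general nonrationality statement of \cite{HPT16} for the full $(6,6)$ family; the special member must be exhibited \emph{inside} the determinantal family, so that very general members of that constrained subfamily specialize to it.

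The paper fills this gap explicitly. It specializes the degeneracy curve to a configuration $2E \cup F_1 \cup F_1' \cup F_2 \cup F_2'$, with $E$ of bidegree $(2,2)$ and the $F$'s rulings tangent to $E$, realized by the diagonal symmetric matrix
$$A=\left( \begin{matrix} y_1z_1 & 0 & 0 & 0 \\ 0 & y_1z_2 & 0 & 0 \\ 0 & 0 & y_2z_1 & 0 \\ 0 & 0 & 0 & y_2z_2\,g \end{matrix}\right), \qquad g\in\bC[y_1,z_1;y_2,z_2]_{(2,2)},$$
whose entries have precisely the prescribed determinantal bidegrees $(1,1)$, $(1,1)$, $(1,1)$, $(3,3)$ --- so the specialization stays within the constrained family, resolving your numerical worry. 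Pirutka's criterion \cite[Th.~3.17]{PirSurvey} then yields the nontrivial unramified class, pulled back from a Brauer class of $\bC(\bP^1\times\bP^1)$ ramified along the four rulings, with higher-order ramification at their four intersection points; and universal $\CH_0$-triviality of a resolution is not verified from scratch but deduced from the observation that the singularity configuration is \'etale-locally identical to strata of the configuration in \cite[\S 5]{HPT16} (there a cycle of three doubled rational curves simply tangent to the reduced component, here a cycle of four rulings with the same tangency to $E$). Your Clifford-algebra language for (i) is in the right spirit, but the choice of the class, the residue computations, and the local matching of singularities constitute the theorem; as written, your proposal is a correct strategy with its decisive step missing.
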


The remainder of this section is devoted to the proof. The idea is to view the quadric surface bundle over $\bP^1$, over $k=\mathbb C(t)$,
as a quadric surface bundle over $\bP^1\times \bP^1$, over $\bC$. If $X$ were rational over $k$ then the associated fourfold would be rational over $\bC$.  
The quadric surface bundle degenerates along a curve of bidegree $(6,6)$. To this we will apply the specialization method of 
\cite{voisin} or \cite{ct-pirutka}, as in \cite{HPT16}. This entails
two steps:
\begin{itemize}
\item exhibit a specialization with nontrivial unramified Brauer group and 
\item show that the singularities of the specialization are mild.
\end{itemize}
Throughout, we make reference to the geometric analysis
in Section~\ref{subsect:GA}.

\subsection{The case of function fields}
We now assume that $k=\bC(t)$.
We have $X(k)\neq \emptyset$ by the Tsen-Lang Theorem; indeed, rational points
are Zariski dense as $X$ is rationally connected over the function field of a curve. (See also the unirationality results in \cite[Prop.~2.3]{CTSSDI}.)

We write down representative models for projective equations of models over $\bP^1$.
The complete intersection of two quadrics sits
$$\cX \subset \bP(\cO_{\bP^1} \oplus \cO_{\bP^1}(-1)^{\oplus 3}
 \oplus {\cO_{\bP^1}(-2)}^{\oplus 2}),
 $$
where the first summand arises from the section associated with
$x\in X(k)$ and the first four summands arise from the tangent space $T_xX$.
Fix $\{x_0\}$, $\{x_1,x_2,x_3\}$, and $\{x_4,x_5\}$
to be weighted variables corresponding to the summands.
We write equations
$  F = G =0 $
where
$$ F = 2x_0x_4 + c_{11}x_1^2 + \cdots + 2l_{14}x_1x_4+\cdots + 
q_{44}x_4^2 + 2q_{45}x_4x_5+q_{55}x_5^2 $$
and
$$ G = 2x_0x_5 + d_{11}x_1^2 +\cdots + 2m_{14}x_1x_4+\cdots + 
r_{44}x_4^2 + 2r_{45}x_4x_5+r_{55}x_5^2 $$
with the $c_{ij}$ and $d_{ij}$ of degree $d$, the $l_{ij}$
and $m_{ij}$ of degree $d+1$ and the $q_{ij}$ and $r_{ij}$
of degree $d+2$.
The projection from $T_xX$ is given by $[x_4,x_5]$; write
$x_5=tx_4$ which we'll take as the second grading.
The elimination is obtained by taking $tF-G$, substituting
$x_5=tx_4$, and then re-homogenizing the $t$ variable. Thus we
obtain a quadric surface bundle
$$\cX' \subset  \bP(\cO^3_{\bP^1\times \bP^1} \oplus \cO_{\bP^1 \times \bP^1}(-1,-1))
 \ra \bP^1 \times \bP^1$$
with equation
$$L_{11}x_1^2 + 2L_{12}x_1x_2+\cdots + 2Q_{14}x_1x_4+\cdots+
C_{44}x_4^2=0$$
associated with the symmetric matrix
$$A = \left( \begin{matrix}
   L_{11} & L_{12} & L_{13} & Q_{14} \\
   L_{12} & L_{22} & L_{23} & Q_{24} \\
   L_{13} & L_{23} & L_{33} & Q_{34} \\
   Q_{14} & Q_{24} & Q_{34} & C_{44} \end{matrix}
\right)$$
where the $L_{ij}$ are bidegree $(d,1)$, the $Q_{ij}$ bidegree $(d+1,2)$
and $C_{44}$ bidegree $(d+2,3)$. The degeneracy locus $D\subset \bP^1 \times \bP^1$
has bidegree $(4d+2,6)$.

Conversely, given a symmetric matrix $A$ of forms with the prescribed bidegrees,
we may reverse the process to recover $\cX$ from $\cX'$.
The construction depends on 
$$6\cdot 2(d+1) + 3\cdot 3(d+2) + 4(d+3) - (9+1+3\cdot 4) - 6 = 25d + 14$$
parameters. However, a generic form of bidegree $(4d+2,6)$ depends on
$$7(4d+3) - (1+6) = 28d + 14$$
parameters.  

The model of $C$
$$\cC \ra \bP^1 \times \bP^1$$
is a double cover branched over a degree-six multisection. The
unramified element $\alpha \in \Br(\cC)[2]$ governs the rationality of
the generic fiber.

If $d=0$ then $\cC \ra \bP^1 \times \bP^1$ is a double cover branched
over a curve of bidegree $(2,6)$ which has trivial Brauer group; here
the fibration $q$ must have a section. The case $d=1$ yields a
bidegree-$(6,6)$ degeneracy locus -- we focus on this.
However, note that the determinantal condition gives a codimension {\bf three}
locus in the parameter space of $(6,6)$ forms. 

\subsection{Application to Theorem~\ref{theo:main}}

Specialize to
$$ 2E \cup F_1 \cup F'_1 \cup F_2 \cup F'_2$$
where $E$ is bidegree $(2,2)$, $F_1$ and $F'_1$ are of bidegree
$(1,0)$, $F_2$ and $F'_2$ are of bidegree,
and all the fibers are tangent to $E$.
Such configurations depend on {\bf two} parameters. 

We put these in the prescribed determinantal form. First write
$$F_1=\{y_1=0 \}, F'_1=\{z_1=0\}, F_2 = \{y_2=0\}, F'_2=\{z_2=0\}$$
and set 
$$E=\{g(y_1,z_1;y_2,z_2)=0\}, \quad
g \in \bC[y_1,z_1;y_2,z_2]_{(2,2)},$$
with $g$ chosen such that $E$ has the tangencies specified above.
We set
$$ A = \left( \begin{matrix} y_1z_1 & 0  & 0 & 0 \\
			      0     & y_1z_2 & 0 & 0 \\
			      0     &   0    & y_2z_1 & 0 \\
		 	      0     &   0    &   0    & y_2z_2g 
		\end{matrix} \right).$$
Let $\cC_0 \ra \bP^1 \times \bP^1$ denote the quadric surface bundle
associated with this quadratic form.

Pirutka's technique \cite[Th.~3.17]{PirSurvey} shows that $\cC_0$
has unramified cohomology, arising from the pull-back of the class
of $\Br(\bC(\bP^1 \times \bP^1))$ ramified along 
$$F_1 \cup F'_1 \cup F_2 \cup F'_2$$
with higher-order ramification at the four points of intersection.
While $\cC_0$ is singular, it admits a universally
$\CH_0$-trivial resolution
of singularities following the procedure in \cite[\S 5]{HPT16}.
Indeed, the configuration of singularities here is \'etale-locally
equivalent at each point to a stratum of the configuration 
considered in \cite{HPT16}. There we had a cycle of three rational curves,
each with multiplicity two and simply tangent 
to the smooth (multiplicity-one) component
of the degeneracy locus at a point;
here we have a cycle of four rational curves with the same
tangency to $E$.

Now suppose
$$\cC \ra \bP^1 \times \bP^1$$
is branched over a {\em very general}
$(6,6)$ curve. An application of \cite{voisin} or \cite{ct-pirutka}, as in \cite{HPT16}, shows that 
for such  $\cC$ the corresponding fourfold lacks an integral decomposition
of the diagonal and thus fails to be stably rational.

\section{The real case}
\label{sect:real}

\subsection{Normal forms for pencils of quadrics}

Let $X\subset \bP^n$ be a smooth complete intersection of two quadrics
over $\bR$.

Write $X=\{Q_0=Q_1=0\}$ with associated pencil 
$$P=\{ s_0Q_0+s_1Q_1 \} \subset \bP^n \times \bP^1$$
and
binary form of degree $n+1$
$$F(s_0,s_1)=\det(s_0Q_0+s_1Q_1)\in \bR[s_0,s_1].$$
Note that
$F(s_0,s_1)$ has no multiple roots because $X$ is smooth.  We 
write a normal form under linear changes of coordinates,
following \cite[Th.~2]{Thompson}, expressed as an orthogonal
sum of matrix blocks:
\begin{itemize}
\item{associated with nonreal roots $a+bi$ of $F(1,\rho)$ the block
\begin{equation} \label{2block}
\left( \begin{matrix} b & a-\rho  \\  
			a-\rho & -b \end{matrix}\right);
\end{equation}
}
\item{associated with real roots $a$ of $F(1,\rho)$ the block
$$\left( \begin{matrix}\pm(a-\rho)  
			\end{matrix}\right).$$}
\end{itemize}
Provided that $s_0\nmid F(s_0,s_1)$, i.e., $Q_1$ is nondegenerate,
these are the only blocks that may arise.

\subsection{Isotopy classification}
We follow \cite[\S 1]{Krasnov}.

Consider the degree-two covering
$S^1 \ra \bP^1$ obtained by realizing
$$S^1=\{(s_0,s_1): s_0^2+s_1^2=1 \}.$$ 
Let 
$$\widetilde{P}:=P\times_{\bP^1}S^1 \ra P$$
denote the associated covering. The advantage of passing to $S^1$ is
that the equation $s_0Q_0+s_1Q_1$ lifts to a well-defined family of quadratic
forms over $S^1$. (Over $\bP^1$ the forms are defined up to sign.) 

The {\em positive index function} $I^+: S^1 \ra \bZ$ is defined
as the number of positive eigenvalues of the associated form
$s_0Q_0+s_1Q_1$.
It satisfies the following:
\begin{itemize}
\item{$I^+$ is piecewise constant with $2k\le 2n+2$ jumps of
height $\pm 1$;}
\item{$I^+(-s_0,-s_1)=n+1-I^+(s_0,s_1)$ provided $(s_0,s_1)$
is not one of the points of discontinuty.}
\end{itemize}
We are using the fact that $X$ is smooth which means
$F(s_0,s_1)$ has no multiple roots so a quadric
drops rank at $(s_0,s_1) \in S^1$ by at most one.

We extract a combinatorial invariant: A point of discontinuity 
for $I^+$ is {\em positive} if $I^+$ increases as we cross,
moving counter-clockwise. Each positive point of discontinuity
matches with its antipodal negative point of discontinuity. 
Observe that
\begin{itemize}
\item{$k=0$ only when $n$ is odd and $F(s_0,s_1)$ has no real nontrivial
roots, i.e., our pencil is a sum of $\frac{n+1}{2}$ two-dimensional real
blocks (\ref{2block});}
\item{when $k\neq 0$ we have $k\in \{1,\ldots,n+1\}$ and $k\equiv n+1 \pmod{2}$.}
\end{itemize}
This reflects the fact that the number of real roots of a polynomial $p(s) \in \bR[s]$  has
the same parity as $\deg(p)$.

Decompose
$$k=k_1+\cdots + k_{2s+1}, k_i \in \bN,$$
where each $k_i$ represents that number of consecutive positive
points of discontinuity, with the indices increasing
as we move counter-clockwise around $S^1$. The length of the decomposition
is odd because the sign of the quadratic form is reversed under the
antipodal involution of $S^1$. We impose an equivalence relation,
identifying decompositions related by
cyclic permutations or reversing all the terms.

\begin{theo} \cite{Krasnov}
Isotopy classes of smooth complete intersections of two quadrics in $\bP^n$ 
correspond to equivalence classes of odd decompositions
$$k_1+\cdots+k_{2s+1} = k \leq n+1$$
where $k$ is a non-negative integer with parity equal to $n+1$
(allowing $k=0$ when $n$ is odd).
\end{theo}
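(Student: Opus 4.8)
The plan is to establish a bijection between isotopy classes of smooth complete intersections $X = \{Q_0 = Q_1 = 0\}$ and equivalence classes of odd decompositions by interpreting the invariant extracted in the preceding discussion and showing it is both a complete and realizable invariant. First I would recall that the normal form result \cite[Th.~2]{Thompson} (already cited above) reduces the linear-algebraic data of the pencil, up to $\mathrm{GL}_{n+1}(\bR) \times \mathrm{GL}_2(\bR)$ equivalence, to the unordered collection of real and nonreal roots of $F(s_0,s_1)$ together with the signs $\pm$ attached to the real roots. The key reinterpretation is that this sign data is exactly what the positive index function $I^+ : S^1 \to \bZ$ records: crossing a real root counterclockwise, $I^+$ jumps by $\pm 1$ according to the sign of the corresponding one-dimensional block, and the antipodal symmetry $I^+(-s_0,-s_1) = n+1 - I^+(s_0,s_1)$ pairs each positive discontinuity with its negative antipode. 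Thus the cyclic arrangement of positive discontinuities around $S^1$, encoded as the decomposition $k = k_1 + \cdots + k_{2s+1}$, captures precisely the combinatorial equivalence class of the sign-labeled real root configuration.

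The argument then splits into two directions. For injectivity (the invariant determines the isotopy class), I would show that two pencils with the same odd decomposition can be connected by a path: one first normalizes both to Thompson form, then moves the nonreal roots (and conjugate pairs) and the real roots along $S^1$ within a fixed combinatorial pattern without ever creating a multiple root — since smoothness is equivalent to $F$ having distinct roots, and the space of such configurations with a fixed cyclic sign-pattern is connected. This continuous deformation of the coefficient matrices yields an isotopy of the corresponding smooth intersections. For surjectivity and well-definedness, I would verify that every odd decomposition with $k \le n+1$, $k \equiv n+1 \pmod 2$, is realized by an explicit pencil (place $k$ real roots with the prescribed sign jumps and fill the remaining $(n+1-k)/2$ degrees of freedom with two-dimensional nonreal blocks \eqref{2block}), and that the equivalence relation — cyclic permutation and global reversal — is forced respectively by the rotational reparametrization freedom of $S^1$ and by the orientation-reversing symmetry $(s_0,s_1) \mapsto (s_0,-s_1)$, which is the residual freedom in choosing coordinates on $\bP^1$.

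I expect the main obstacle to be the injectivity direction: passing from equality of the discrete invariant to an actual isotopy of smooth varieties, rather than merely an equivalence of pencils. One must check that the path constructed at the level of pencil coefficients can be taken so that every intermediate member $\{s_0 Q_0 + s_1 Q_1\}$ stays a smooth complete intersection, i.e. that $F$ retains distinct roots throughout; collisions of roots must be avoided, which is exactly why the decomposition data (the cyclic order of discontinuities) and not just the unordered multiset of signs is the correct invariant. The subtlety is that two sign-patterns giving the same multiset of $\pm$ labels but inequivalent cyclic arrangements cannot be connected without merging roots, so one has to argue that the connected components of the configuration space are in bijection with the equivalence classes of decompositions. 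This is essentially a statement about the topology of the space of binary forms of degree $n+1$ with distinct roots, stratified by real/nonreal root type, and invoking the cited isotopy classification of \cite{Krasnov} is what makes this tractable; the remaining work is to match the combinatorics of that stratification to the odd-decomposition bookkeeping described above.
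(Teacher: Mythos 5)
A preliminary point of comparison: the paper does not prove this theorem at all --- it is quoted from \cite{Krasnov}, and the surrounding text only sets up the invariant (the Thompson normal form \cite{Thompson}, the index function $I^+$ on $S^1$, the run-decomposition of the positive discontinuities, and the cyclic/reversal equivalence). So there is no proof in the paper to match your steps against; your proposal has to be judged as a standalone argument, and as such it has one genuine gap, precisely at the point you yourself flag.

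The gap is the injectivity/connectedness step. You reduce it to the claim that the locus of smooth pencils with a fixed equivalence class of sign-labeled root configurations is path-connected, but then write that ``invoking the cited isotopy classification of \cite{Krasnov} is what makes this tractable'' --- that is circular, since that classification is exactly the theorem under proof. To close the gap you would need two concrete arguments your sketch omits. First, a configuration-space computation: the space of antipodally antisymmetric patterns of $2k$ signed discontinuity points on $S^1$ together with $(n+1-k)/2$ conjugate pairs of nonreal roots, with fixed cyclic run-pattern, is connected after quotienting by the reparametrization action of $\GL_2(\bR)$ on the pencil parameter; this is elementary but it is the actual content, not a citation. Second, a lifting statement: a path of root-and-sign data must lift to a path of pencils, i.e.\ the fiber of the normal-form map over a fixed configuration --- a $\GL_{n+1}(\bR)$-orbit --- must be connected even though $\GL_{n+1}(\bR)$ has two components; this holds because the stabilizer of the Thompson normal form contains orientation-reversing elements (sign changes on individual blocks), but it has to be said, since otherwise ``same normal form'' does not yet give ``isotopic.'' The rest of your outline is sound and consistent with the paper's framing: the identification of the jumps of $I^+$ with the signs of the one-dimensional blocks, the well-definedness of the invariant (roots cannot collide along an isotopy because smoothness forbids multiple roots of $F$), the realizability of every decomposition with $k\equiv n+1 \pmod 2$ by orthogonal sums of signed one-dimensional blocks and nonreal two-by-two blocks, and the derivation of the cyclic-permutation and reversal equivalences from the residual orientation-preserving and orientation-reversing coordinate freedom on $\bP^1$.
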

The isotopy class corresponding to the trivial decomposition of $k=n+1$
is the case where $X(\bR)=\emptyset$.
\begin{exam}
\begin{enumerate}
\item[$(n=2)$]{There are three isotopy classes corresponding to 
$$(1),(3),(1,1,1).$$}
\item[$(n=3)$]{There are four isotopy classes corresponding to
$$(0),(2),(2,1,1),(4).$$}
\item[$(n=4)$]{There are seven isotopy classes corresponding to
$$(1),(3),(1,1,1),(5),(3,1,1),(2,2,1),(1,1,1,1,1).$$}
\item[$(n=5)$]{There are nine isotopy classes corresponding to
$$(0),(2),(4),(2,1,1),(6),(4,1,1),(3,2,1),(2,2,2),(2,1,1,1,1).$$}
\end{enumerate}
\end{exam}

\subsection{Existence of maximal linear subspaces}
Let $X\subset \bP^n$ be a smooth complete intersection of two quadrics and
write 
$$\dim(X)=n-1= 2m+1 \text{ or }2m,$$
depending on the parity of $n$. There exist linear subspaces in $X_{\bC}$ of
dimension $\le m$. The Lefschetz hyperplane theorem shows that larger
dimensional subspaces are not possible.
\begin{theo} \cite[\S 2]{Krasnov}
The variety $X$ contains a real linear subspace of maximal dimension $m$ provided
that 
$$m+1 \le I^+(s_0,s_1) \le m+3$$
for each $(s_0,s_1) \in S^1.$
\end{theo}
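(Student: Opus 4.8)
The plan is to reduce the existence of a real $\bP^m\subset X$ to a problem about common isotropic subspaces for the real pencil, to read off pointwise existence from the index bounds, and then to globalize. A real $\bP^m$ contained in $X$ is the same datum as a real $(m+1)$-dimensional subspace $W\subset\bR^{n+1}$ on which both $Q_0$ and $Q_1$ vanish; such a $W$ is automatically isotropic for every form $q_\theta:=s_0Q_0+s_1Q_1$ in the pencil, and conversely. First I would record what the hypothesis buys us fiberwise: lifting to $S^1$, the displayed inequalities force the numbers of positive and negative eigenvalues of $q_\theta$, namely $I^+$ and $n+1-I^+$, to stay balanced, $\min(I^+,\,n+1-I^+)\ge m+1$, at every $\theta\in S^1$ (using the antipodal relation $I^+(-s_0,-s_1)=n+1-I^+(s_0,s_1)$ recalled above). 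This is precisely the condition for a single $q_\theta$ to admit a real $(m+1)$-dimensional isotropic subspace. So the entire difficulty is upgrading pointwise existence to the existence of one $W$ that works simultaneously for all $\theta$.

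To make the construction concrete I would pass to the orthogonal normal form of the pencil recalled above (the normal form of Thompson), writing it as a sum of $2\times2$ blocks attached to the nonreal roots of $F$ and $1\times1$ blocks $\pm(a-\rho)$ attached to the real roots. A first observation, checked by hand, is that an individual $2\times2$ block contains no line isotropic for both $Q_0$ and $Q_1$: the only candidate lines are the two coordinate axes, neither of which is isotropic for $Q_0$ since the off-diagonal entry $b$ is nonzero for a nonreal root. Hence the isotropic subspace must be assembled from several blocks at once. The natural assembly is to pair the $2\times2$ blocks and produce, inside each $4$-dimensional pair, a $2$-dimensional plane isotropic for the whole sub-pencil, and to contribute coordinate axes from the real $1\times1$ blocks according to their signs. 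The bookkeeping of how many real blocks carry each sign on each arc of $S^1$ between consecutive degenerations is exactly governed by the jumps of $I^+$, and the window $[m+1,m+3]$ is what guarantees that these contributions total the required dimension $m+1$.

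The step I expect to be the main obstacle is the globalization, i.e.\ showing that the block-by-block isotropic pieces can be chosen compatibly so that one and the same $W$ is isotropic for the entire pencil rather than merely for $Q_0$ and $Q_1$ inside each block separately. Reformulated topologically, over $S^1$ minus the degeneration points one has the bundle whose fiber at $\theta$ is the (nonempty) real variety of maximal $q_\theta$-isotropic subspaces, and a common $W$ is a point lying in every fiber, that is, a constant section. The antipodal symmetry of $I^+$ together with the narrowness of the window $[m+1,m+3]$ is what must be used to show that the monodromy of this bundle around $S^1$ is trivial enough for the nonempty fibers to share a point; this connectedness and monodromy analysis is the real content, the pointwise nonemptiness and the linear algebra of single blocks being routine.

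As a cross-check in the maximal odd-dimensional case $\dim X=2m+1$ (so $n=2g+1$ with $g=m+1$), the variety $F_m(X)=F_{g-1}(X)$ is a torsor under $\Pic^0(C)$ by Proposition~\ref{prop:DR}, and a real maximal linear subspace is exactly a real point of this torsor; its existence is controlled by the class of the torsor in $H^1(\Gal(\bC/\bR),\Pic^0(C))$. I would then verify independently that the balanced-signature hypothesis forces this class to vanish, since the real structure of the branch locus of $C\to\bP^1$ is encoded in the same index data $I^+$; agreement of the two computations would confirm the theorem.
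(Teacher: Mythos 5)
Your reduction (a real $\bP^m\subset X$ equals a common totally isotropic $(m+1)$-dimensional real subspace for the whole pencil) and your pointwise analysis (the stated window plus the antipodal relation $I^+(-s_0,-s_1)=n+1-I^+(s_0,s_1)$ force $\min(I^+,\,n+1-I^+)\ge m+1$ at every nondegenerate point, so each individual form admits a real isotropic $(m+1)$-plane) are both correct. But the paper offers no proof to compare against -- it cites Krasnov -- and your proposal stops exactly at the step you yourself identify as the real content, and both concrete strategies you sketch for that step fail. (i) The block-pairing assembly is impossible: two distinct $2\times2$ blocks span a $\bP^3$ in which the base locus of the pencil is a \emph{smooth} quartic elliptic curve (the roots of $F$ are simple), and such a curve contains no line; so there is never a $2$-plane isotropic for the whole sub-pencil inside a pair of $2$-blocks. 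Combined with your own (correct) observation that a single $2$-block contributes no common isotropic line, your bookkeeping can produce at most about $(m+1)/2$ dimensions -- e.g.\ in the case $k=0$, where all blocks are of type (\ref{2block}) and the theorem still demands dimension $m+1$. (ii) The monodromy framing is inadequate in principle: a common $W$ is a point of the \emph{intersection} of all the fibers, viewed inside the fixed Grassmannian, not a flat section of the bundle of maximal $q_\theta$-isotropic subspaces. Trivial monodromy would give a continuous family $W_\theta$ with no reason whatsoever for it to be constant, so even a successful monodromy computation would not close the gap.

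The missing idea is available in this very paper's appendix: by the Amer--Brumer theorem (Th\'eor\`eme \ref{theoamer}), $Q_0=Q_1=0$ vanishes on an $(m+1)$-dimensional real subspace if and only if the single form $Q_0+tQ_1$ has a totally isotropic subspace of dimension $m+1$ over $\bR(t)$. The orderings of $\bR(t)$ are the one-sided limits at points of $\bP^1(\bR)$, and the signature of $Q_0+tQ_1$ at each ordering is read off from $I^+$ on the corresponding arc of $S^1$; your pointwise inequality is then \emph{exactly} the statement that $\min(\text{pos},\text{neg})\ge m+1$ at every ordering of $\bR(t)$. Witt's local--global theorem for function fields of real curves (a totally indefinite form of rank at least $3$ over $\bR(t)$ is isotropic) now lets one split off hyperbolic planes inductively; since the number of variables is $n+1\ge 2m+3$ (note $\dim X=n-2$, so the paper's ``$n-1$'' is a typo), the rank stays $\ge 3$ through all $m+1$ splittings, giving Witt index $\ge m+1$ over $\bR(t)$, and Amer descends this to the desired real $\bP^m\subset X$. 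This handles your ``globalization'' in one stroke, replacing the monodromy analysis entirely. Your final torsor cross-check via Proposition~\ref{prop:DR} is a reasonable consistency test in the odd-dimensional case but, as you acknowledge, is not a proof and does not repair the gap.
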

\begin{exam}
For threefolds $X\subset \bP^5$ we have the variety of lines $F_1(X)$ admits
real points for the decompositions
$$(0),(2),(2,1,1),(2,2,2),(2,1,1,1,1).$$
These correspond to sequences of nondegenerate signatures on $S^1$:
\begin{align*}
(3,&3)  \\
(2,4) (3,3) & (4,2) (3,3) \\
(2,4) (3,3) (4,2) (3,3) & (4,2) (3,3) (2,4) (3,3) \\
(2,4) (3,3) (4,2) (3,3) (2,4) (3,3) & (4,2) (3,3) (2,4) (3,3) (4,2) (3,3)  \\
(2,4) (3,3) (4,2) (3,3) (4,2) (3,3) & (4,2) (3,3) (2,4) (3,3) (2,4) (3,3) 
\end{align*}
Here we omit the points of discontinuity.
\end{exam}

In particular, decompositions $(4)$, $(4,1,1)$ and $(3,2,1)$ admit real points but no real lines.

\subsection{Topological types in the threefold case}
See \cite[Th.~5.4]{Krasnov} for topological types of complete intersections
of two quadrics $X\subset \bP^5$ over $\bR$. We list the types that admit real
points but not real lines:
\begin{itemize}
\item{$(4)$: $X({\bR})$ is diffeomorphic to the sphere $S^3$;}
\item{$(4,1,1)$: $X({\bR})$ is diffeomorphic to the disjoint
union $S^3 \sqcup S^3$;}
\item{$(3,2,1)$: $X({\bR})$ is diffeomorphic to the product of
spheres $S^1 \times S^2$.}
\end{itemize}
In particular, $X({\bR})$ is disconnected in case $(4,1,1)$ thus irrational over $\bR$.
It is not {\em a priori} clear whether cases $(4)$ or $(3,2,1)$ are rational;
we explore this in Theorem~\ref{theo:IJcrit}. Certainly
there is no topological obstruction to realizing $S^3$ with a rational
threefold
$$\{x_1^2+x_2^2+x_3^2+x_4^2=x_0^2 \} \subset \bP^4.$$
And there is no topological obstruction for $(3,2,1)$ -- case
$(2)$ yields examples
that are rational but diffeomorphic to $S^1 \times S^2$. 
See \cite{KollarNash} for more discussion.

\subsection{A refinement of the intermediate Jacobian criterion}

Let $Y$ be a smooth projective geometrically rational threefold over $\bR$.
If $Y$ is rational over $\bR$ then its intermediate Jacobian is isomorphic
to the Jacobian of a smooth projective (not necessarily irreducible) 
curve $D$ over $\bR$ \cite[Cor.~2.8]{BenWit}
$$\IntJ(Y) \simeq \Jac(D).$$

Fix a family of codimension-two algebraic cycles
$$
\cZ \subset Y \times B
$$
flat over the base $B$. Assume that:
\begin{itemize}
\item{
given $b_0\in B(\bC)$, the morphism 
$$
\begin{array}{rcl}
B_{\bC} & \ra & \IntJ(Y_{\bC}) \\
b & \mapsto & [\cZ_b - \cZ_{b_0}]
\end{array}
$$
is an isomorphism;}
\item{
the Albanese $\Alb(B) \simeq \Jac(D)$ over $\bR$.}
\end{itemize}
Under the first assumption, $B$ carries the structure of a principal 
homogeneous space over an abelian variety, which is isomorphic to
$\Jac(D)$ by the second assumption.

\begin{prop} \label{prop:PHS}
Retain the assumptions above. Assume that
\begin{itemize}
\item
$Y$ is rational over $\bR$;
\item
$\IntJ(Y)$ admits no factors as a principally polarized abelian variety
that are elliptic or the product of two complex conjugate elliptic curves.
\end{itemize}
Then there exist principally polarized
factors $J_i$ of $\IntJ(Y)$,
isoomorphisms $\eta_i:\Jac(D_i) \ra J_i $, and degrees $d_{i}$ 
such that
$$[B]=\sum_{i} {\eta_{i}}_*[\CH^1(D_i)_{d_{i}}].$$
\end{prop}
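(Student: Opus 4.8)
The plan is to reduce the statement to the behaviour of the intermediate Jacobian torsor under the blow-ups and blow-downs realizing a birational equivalence $\bP^3 \dashrightarrow Y$ over $\bR$, and then to transport the resulting description of $[B]$ through the rigidity of principally polarized decompositions. Throughout I regard $[B]$ as a class in $H^1(\bR,\IntJ(Y))$, using the identification $\Alb(B)\simeq \Jac(D)\simeq \IntJ(Y)$ supplied by the hypotheses.

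First I would fix a birational map $\bP^3 \dashrightarrow Y$, which exists by rationality, and resolve it by weak factorization into a chain of blow-ups and blow-downs along smooth centers (points and curves) defined over $\bR$; this is available in characteristic zero and is the form used in \cite{BenWit}. The intermediate Jacobian of $\bP^3$ is trivial, blowing up a point leaves $\IntJ$ unchanged, and blowing up a smooth geometrically connected curve $C_j$ over $\bR$ enlarges $\IntJ$ by a principally polarized factor $\Jac(C_j)$ while modifying the codimension-two cycle class by the Abel--Jacobi class of the exceptional divisor, namely the Picard torsor $\CH^1(C_j)_{d_j}$ for the appropriate degree $d_j$. Tracking this along the whole chain produces a principally polarized product decomposition $\IntJ(Y)\simeq \prod_j \Jac(C_j)$ together with an identification of the \emph{intrinsic} intermediate Jacobian torsor with $\sum_j \pm[\CH^1(C_j)_{d_j}]$; since $-[\CH^1(C_j)_{d}]=[\CH^1(C_j)_{-d}]$, every sign is absorbed into the degree. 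Setting $J_j:=\Jac(C_j)$, $D_j:=C_j$, and $\eta_j$ the tautological isomorphism then exhibits the intrinsic torsor in the asserted form.

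It remains to identify the class $[B]$ produced by the given family $\cZ$ with this intrinsic torsor. Both $B$ and the intermediate Jacobian torsor become canonically trivial over $\bC$, since by hypothesis the complex Abel--Jacobi map $b\mapsto[\cZ_b-\cZ_{b_0}]$ is an isomorphism onto $\IntJ(Y_{\bC})$, so the two torsors differ by an automorphism $\psi$ of the principally polarized abelian variety $\IntJ(Y)$ over $\bR$. The crux is to show that $\psi_*$ carries a sum of Picard-torsor classes to a sum of Picard-torsor classes. Here I would invoke the uniqueness of the decomposition of a principally polarized abelian variety into indecomposable factors (as in the Clemens--Griffiths argument): this rigidity forces $\psi$ to permute isomorphic indecomposable factors and to act on each by its own automorphisms. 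Both operations preserve the set of Picard-torsor classes, because a permutation matches $\CH^1(D_j)_{d_j}$ with the corresponding factor, while $-1$ acts on $\Jac(C_j)$ by $[\CH^1(C_j)_{d}]\mapsto[\CH^1(C_j)_{-d}]$. Consequently $[B]=\psi_*\bigl(\sum_j[\CH^1(D_j)_{d_j}]\bigr)$ is again of the required form.

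I expect the automorphism analysis of the previous step to be the main obstacle, and it is precisely here that the two excluded cases enter. For an elliptic factor $E$, every Picard torsor of a curve with Jacobian $E$ is trivial, whereas $H^1(\bR,E)$ may be nonzero, so a nontrivial component of $[B]$ along such a factor could never be a Picard class; the product of two complex conjugate elliptic curves behaves similarly and, moreover, carries extra automorphisms preserving the product polarization that can move torsor classes off the Picard locus. Excluding these factors guarantees that the automorphism group of each indecomposable factor is generated by $\pm 1$ together with permutations of isomorphic factors, which is exactly what makes the set of Picard-torsor classes stable under $\psi_*$ and closes the argument.
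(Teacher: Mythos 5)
Your overall architecture --- weak factorization over $\bR$, the blow-up formula for $\CH^2$, rigidity of principally polarized decompositions, Torelli, and the exclusion of elliptic factors --- is the same as the paper's. But there is a genuine gap at the pivotal comparison step. You assert that since $B$ and the intrinsic torsor ``both become trivial over $\bC$,'' the two torsors ``differ by an automorphism $\psi$ of $\IntJ(Y)$ over $\bR$.'' This is a non-sequitur: \emph{every} class in $H^1(\bR,A)$, for $A$ an abelian variety over $\bR$, becomes trivial over $\bC$, so trivializing over $\bC$ imposes no relation whatsoever between the two classes. Concretely, for an elliptic curve $E$ over $\bR$ with $E(\bR)$ disconnected, the trivial torsor and a nontrivial torsor both split over $\bC$, yet no pushforward by an automorphism of $E$ relates their classes (the pushforward of the trivial class is trivial). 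The existence of your $\psi$ --- equivalently, of an $\bR$-isomorphism of torsors between $B$ and the Chow-theoretic torsor --- is exactly what must be proved, and it requires the input you never invoke: the family $\cZ$ is defined over $\bR$, so the cycle-class map $b \mapsto [\cZ_b]$ gives a morphism \emph{over $\bR$} of principal homogeneous spaces from $B$ to the relevant summand of $\CH^2(Y)$ furnished by the blow-up formula, which is compatible with algebraic families. That morphism is an isomorphism over $\bC$ by your first hypothesis, hence an isomorphism over $\bR$; this is precisely how the paper concludes, and without it your rigidity argument in the third paragraph has nothing to act on.

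Two secondary points. First, your ``tracking along the whole chain'' silently performs the cancellation of inessential centers (curves blown up and later blown down in the zigzag); this is where the paper does real work, interpreting the projector $\Pi_j$ from the factor $\Jac(D_j)^{N_j}$ of $\IntJ(Z)$ onto $\Jac(D_j)^{n_j}$ in $\IntJ(Y)$ as a polarization-respecting matrix of endomorphisms and concluding it is a coordinate projection up to isomorphisms of factors --- essentially your rigidity step, but needed already at this earlier stage; as written, your step asserting $\IntJ(Y)\simeq \prod_j \Jac(C_j)$ with torsor class $\sum_j \pm[\CH^1(C_j)_{d_j}]$ assumes the conclusion of that analysis. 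Second, your explanation of the elliptic exclusion is incorrect: Picard torsors of a genus-one curve $D$ with $\Jac(D)\simeq E$ need not be trivial ($[\Pic^1(D)]=[D]$ is nonzero when $D(\bR)=\emptyset$); the actual problem, as the paper notes, is that nonisomorphic genus-one curves can have isomorphic Jacobians, so the curve $D_i$ is no longer determined by the factor $J_i$ and the statement's bookkeeping breaks down. This last point is only motivational, but the gap in the comparison step is essential and would need to be repaired along the lines above.
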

Elliptic factors make the bookkeeping more complex:
There exist nonisomorphic genus-one curves with isomorphic
Jacobians. By `complex conjugate elliptic curves' we mean that 
complex conjugation interchanges the two factors/components.

\begin{lemm}
Each irreducible factor of $\IntJ(Y)$ is elliptic, a product of
two complex conjugate elliptic curves, or of the form $J_i \simeq \Jac(D_i)$,
where $D_i$ is an irreducible smooth projective 
curve over $\bR$ of arithmetic genus at least two. In the last case,
$D_i$ is uniquely determined by its Jacobian.
\end{lemm}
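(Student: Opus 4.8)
The plan is to reduce the statement to the unique factorization of principally polarized abelian varieties, combined with a Galois-orbit analysis of the factors after base change to $\bC$. The input is geometric rationality of $Y$: over $\bC$ the intermediate Jacobian of a rational threefold is, as a principally polarized abelian variety, a product of Jacobians of smooth connected curves (Clemens--Griffiths, via the behavior of $\IntJ$ under blow-ups with smooth centers together with weak factorization). Thus $\IntJ(Y)_{\bC} \simeq \prod_j \Jac(C_j)$ with each $C_j$ a smooth connected curve over $\bC$ of genus $g_j \ge 1$, and by uniqueness of the decomposition of a principally polarized abelian variety into indecomposable factors, every $\bC$-indecomposable principally polarized factor of $\IntJ(Y)_{\bC}$ is of the form $\Jac(C)$ for such a curve $C$.

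Next I would decompose $\IntJ(Y)$ over $\bR$ itself into $\bR$-indecomposable principally polarized factors $J_i$; this decomposition exists and is unique up to isomorphism and permutation by the same decomposition theorem applied over $\bR$. Fix one factor $J_i$ and base change to $\bC$. The group $\Gal(\bC/\bR) = \bZ/2\bZ$ acts on the (finite) set of $\bC$-indecomposable factors of $(J_i)_{\bC}$, each of which is a Jacobian by the previous paragraph. If this action were not transitive, a proper nonempty conjugation-stable subproduct would descend to $\bR$ and split off a factor of $J_i$, contradicting $\bR$-indecomposability. Hence the action is transitive, so the set of $\bC$-factors has exactly one or two elements.

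I would then run the case analysis. If $(J_i)_{\bC}$ is $\bC$-indecomposable, then $J_i$ is absolutely indecomposable and $(J_i)_{\bC} \simeq \Jac(C)$ for a smooth connected $C/\bC$. When $g(C)=1$ the factor $J_i$ is a one-dimensional principally polarized abelian variety over $\bR$, i.e.\ an elliptic curve. When $g(C)\ge 2$, Torelli's theorem recovers $C$ functorially from its polarized Jacobian, and since this reconstruction is Galois-equivariant the real structure on $J_i$ descends to a real structure on $C$, yielding a smooth geometrically connected curve $D_i/\bR$ of genus $\ge 2$ with $J_i \simeq \Jac(D_i)$. If instead $(J_i)_{\bC} \simeq A \times \bar{A}$ with $A$ and its conjugate $\bar{A}$ interchanged by complex conjugation, then $A \simeq \Jac(C)$: when $C$ is elliptic this is exactly a product of two complex conjugate elliptic curves, while when $g(C)\ge 2$ we have $(J_i)_{\bC}\simeq \Jac(C)\times \Jac(\bar{C}) = \Jac(C \sqcup \bar{C})$, and the conjugation-invariant curve $C\sqcup\bar{C}$ descends to a smooth projective $D_i/\bR$ that is irreducible over $\bR$ but geometrically disconnected, of arithmetic genus $g(C)+g(\bar C)-1\ge 3$, again with $J_i \simeq \Jac(D_i)$.

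Finally, uniqueness of $D_i$ in the last case follows from the uniqueness in Torelli's theorem for genus $\ge 2$: each geometric component is recovered uniquely from its polarized Jacobian, hence so is the (possibly disconnected) curve $D_i$ together with its real structure. I expect the main obstacle to be this descent step, namely making Torelli's reconstruction genuinely Galois-equivariant at the level of curves. This requires care with the automorphism and sign ambiguities of the Torelli map --- which is precisely where the exclusion of the elliptic and conjugate-elliptic cases enters, since there the Jacobian does not determine the curve --- and with transporting the real structure to the geometrically disconnected curve $C\sqcup\bar C$; this is where one invokes the analysis of \cite{BenWit}.
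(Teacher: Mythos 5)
Your reduction to ``equivariant Torelli plus descent'' has a genuine gap at exactly the step you flag, and it cannot be repaired by more care with signs: as structured, your argument uses only rationality of $Y_{\bC}$ (Clemens--Griffiths over $\bC$), and under that hypothesis alone the lemma is false. For a geometrically connected non-hyperelliptic curve $C$ of genus $\ge 3$ one has $\Aut(\Jac(C),\Theta)=\Aut(C)\times\{\pm 1\}$, so the antiholomorphic involution giving the real structure on $J_i$ may act as $-\Jac(\tau)$ for a real structure $\tau$ on $C$; then $J_i$ is the quadratic twist by $-1$ of $\Jac(D')$ for the descended curve $D'$, and such a twist is in general \emph{not} the Jacobian of any real curve (when $\Aut(C)$ is trivial, $H^1(\Gal(\bC/\bR),\Aut(C))$ is trivial while the sign component of $H^1(\Gal(\bC/\bR),\Aut(C)\times\{\pm 1\})$ is not, and Torelli over $\bC$ shows no other curve could serve). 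Such non-Jacobian real forms genuinely occur as intermediate Jacobians of geometrically rational real threefolds --- this is precisely the Benoist--Wittenberg irrationality obstruction --- so Galois-equivariance of the Torelli reconstruction cannot by itself deliver $J_i\simeq\Jac(D_i)$ over $\bR$. (Two side remarks: your worry in the conjugate-pair case is actually unfounded, since conjugating $(\sigma_1,\sigma_2)$ by $(\mathrm{id},-\mathrm{id})$ flips both signs, so the $\pm 1$ ambiguity is always absorbable there; and the exclusion of elliptic and conjugate-elliptic factors in the paper concerns the torsor bookkeeping of Proposition~\ref{prop:PHS} --- nonisomorphic genus-one curves sharing a Jacobian --- not the sign ambiguity of Torelli.)

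What closes the gap is the hypothesis you never used: $Y$ is rational over $\bR$, not merely over $\bC$. The paper invokes this through \cite[Cor.~2.8]{BenWit}, quoted immediately before the lemma: rationality over $\bR$ yields an isomorphism $\IntJ(Y)\simeq\Jac(D)$ over $\bR$ with $D$ smooth projective, possibly reducible. Granting this, the lemma is nearly immediate: decompose $D$ into its $\bR$-irreducible components $D_i$, so that $\IntJ(Y)\simeq\prod_i\Jac(D_i)$ with each factor indecomposable over $\bR$ (for a component that is geometrically a conjugate pair, because conjugation swaps the two geometric factors); the trichotomy is then read off from the genus and geometric connectedness of each $D_i$, and the Torelli theorem over nonclosed fields \cite{SerreLauter} is needed only for the uniqueness assertion in the genus $\ge 2$ case --- which is the paper's one-line proof. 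Your Clemens--Griffiths-over-$\bC$, weak factorization, and descent machinery then becomes unnecessary; your final uniqueness paragraph, on the other hand, is essentially correct and matches the paper's use of \cite{SerreLauter}.
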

This follows from 
the Torelli Theorem over nonclosed fields \cite{SerreLauter}.

\begin{proof}
The birational map $\bP^3 \dashrightarrow Y$ admits a factorization as 
iterated blow-ups
and blow-downs along smooth centers \cite{AKMW}:
$$\xymatrix{  Z_0 \ar[dr]  \ar[dd] &  &  \ar[dl] Z_1 & \cdots & Z_{n-1}  \ar[dr]& & Z_n \ar[dl]\ar[dd]  \\
                & Z_{0,1} &   \cdots  & \cdots & \cdots  & Z_{n-1,n} &  \\
            \bP^3 \ar@{-->}[rrrrrr]   &   & & & & &  Y 
}.$$
For notational simplicity we will assume that $n=0$ and write
$Z=Z_0=Z_n$. There is no harm
in doing this as the general case follows by iterating the argument below.

We apply the blow-up formula for a smooth curve $A$ 
in a smooth threefold $W$ \cite[{\S}6.7]{Fulton}
$$\CH^2(\Bl_A(W)) \simeq \CH^1(A) \oplus \CH^2(W).$$
This is compatible with algebraic families: A family of 
codimension-two cycles on $\Bl_A(W)$ induces a morphism from the
base of the family to $\CH^1(A)$.

Thus the intermediate Jacobian of $Z$ in our factorization
is a direct product of the 
Jacobians of the irreducible curves $A_1,\ldots,A_N$ that were blown up 
at various stages and
$$\CH^2(Z) = \bZ \oplus (\oplus_{i=1}^N \CH^1(A_i)).$$
We organize the centers based on which survive in $Y$.
An essential center is a 
connected curve of positive genus whose Jacobian contributes as a factor of
$\IntJ(Y)$.
There may be inessential centers, e.g., positive genus curves
which are blown-up and then blown-down at a subsequent step in the 
factorization. However, these fail to contribute to the Chow group of $Y$.

Decompose our principally polarized abelian variety
$$
\Jac(D) \simeq \IntJ(Y) = \prod_{j=1}^r \Jac(D_j)^{n_j} \times \Jac(E)$$
where the $D_j$ are distinct and of genus at least two,
and $E$ includes the
genus one components.
Under our assumptions, the last factor vanishes.
A principal homogeneous space over such a product is a product of
principal homogenous spaces over the factors.
Consider the corresponding factors in $\IntJ(Z)$ 
$$\IntJ(Z) \simeq \prod_{j=1}^r \Jac(D_j)^{N_j} \times J$$
where $J$ corresponds to the genus-one factors and the higher-genus
factors not among $\{D_1,\ldots,D_r\}.$

The blowup $Z\ra Y$ implies that each $\Jac(D_j)^{n_j}$ sits in 
$\Jac(D_j)^{N_j}$ with projector
$$\Pi_j: \Jac(D_j)^{N_j} \ra \Jac(D_j)^{n_j}.$$
The associated contribution to $\CH^2(Y)$ is given by applying
$\Pi_j$ to $\CH^1(D_j)^{N_j}$, regarded as a sum of
compatible principal homogeneous spaces over $\Jac(D_j)^{N_j}$. 
Interpret $\Pi_j$ as a matrix with entries endomorphisms of
$\Jac(D_j)$. Since this respects principal polarizations, it
takes the shape of a projection onto $n_j$ of the factors,
up to isomorphisms of those factors. Reindex with $i=1,\ldots,n_1+\cdots+n_r$
so that each irreducible factor $\Jac(D_i) \subset \IntJ(Y)$ gets
its own index; the $D_i$ need not be distinct.
Then the summand of $\CH^2(Y)$ 
associated with $D_i$ is obtained by applying these isomorphisms
to cocycles of the form $\CH^1(D_i)_{d_{i}}$ for suitable
degrees $d_{i}$.

Our family of cycles over $B$ gives a morphism of principal homogeneous
spaces
$$B \ra \prod {\eta_{i}}_*(\CH^1(D_i)_{d_{i}})$$
that becomes an isomorphism over $\bC$.
Hence it is an isomorphism over $\bR$.
\end{proof}

\subsection{Application to complete intersections of quadrics}
We return to assuming that $X\subset \bP^5$ is a smooth complete intersection
of two quadrics over $\bR$. We focus on the case where rationality
remains open, i.e., $X(\bR) \neq \emptyset$ but $X$ does not admit a 
real line. Note that $X$ automatically admits a conic over $\bR$,
as the pencil of quadric hypersurfaces containing it admits members
of signature $(3,3)$ that contain isotropic planes.  
Recall from Section~\ref{subsect:QP} that the space of such conics is an \'etale $\bP^3$-bundle over the genus
two curve $C$ associated with the pencil, which thus admits real points.

We apply Proposition~\ref{prop:PHS} to $B=F_1(X)$ and
$\cZ$ the universal family of lines on $X$.
The work of Wang \cite{wang} (see also \cite[Th.~4.8]{ReidThesis})
shows that $F_1(X)$ is a principal
homogeneous space over $\Jac(C)$, satisfying
$$2[F_1(X)]=[\CH^1(C)=\Pic^1(C)].$$
Since $C$ is smooth of genus two, $\Jac(C)$ is simple as a principally
polarized abelian surface -- it cannot be a product of real
or complex conjugate elliptic factors, which are associated with
nodal stable curves of genus two.
Since $C(\bR)\neq \emptyset$ we have $2[F_1(X)]=0$. 
However, then we would have $F_1(X) \simeq \eta_*\CH^1(D_i)_d$ for some $d$
and some endomorphism $\eta:\Jac(D_i) \ra \Jac(D_i)$; here $D_i$ is an
essential center of $\bP^3 \dashrightarrow X$.
Again, the Torelli Theorem \cite{SerreLauter} guarantees that
$D_i\simeq C$. We derive a contradiction as 
$$F_1(X)(\bR) =  \emptyset \quad C(\bR) \neq \emptyset.$$

We summarize this as follows:
\begin{theo} \label{theo:IJcrit}
Let $X \subset \bP^5$ be a smooth complete intersection of two quadrics
over $\bR$ not containing a real line. Then $X$ is not rational over $\bR$.
\end{theo}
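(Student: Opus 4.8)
The plan is to use the refined intermediate Jacobian criterion, Proposition~\ref{prop:PHS}, applied to the variety of lines $F_1(X)$. The logic is a proof by contradiction: assume $X$ is rational over $\bR$ and derive an incompatibility between the real points of $F_1(X)$ and those of the associated genus-two curve $C$. First I would set up the geometric data. Since $X(\bR)\neq\emptyset$ but $X$ contains no real line, I want to verify that $C(\bR)\neq\emptyset$. For this I would argue that $X$ automatically carries a conic over $\bR$: among the pencil of quadric hypersurfaces through $X$ there are members of signature $(3,3)$, which contain isotropic planes, and intersecting such a plane with $X$ yields a real conic. By the structure recalled in Section~\ref{subsect:QP}, the space of conics $R_2(X)$ is an \'etale $\bP^3$-bundle over $C$, so the existence of a real conic forces $C(\bR)\neq\emptyset$.

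Next I would invoke the work of Wang~\cite{wang} (or \cite[Th.~4.8]{ReidThesis}) to identify $F_1(X)$ as a principal homogeneous space over $\Jac(C)=A$, satisfying the relation $2[F_1(X)]=[\Pic^1(C)]$. Since $C(\bR)\neq\emptyset$, the class $[\Pic^1(C)]$ is trivial, whence $2[F_1(X)]=0$; thus $[F_1(X)]$ is $2$-torsion in $H^1(\bR,A)$. The key structural input is that $\Jac(C)$ is a \emph{simple} principally polarized abelian surface, because $C$ is a smooth genus-two curve — in particular $\IntJ(X)\simeq\Jac(C)$ has no elliptic or complex-conjugate-elliptic factors, so the hypotheses of Proposition~\ref{prop:PHS} are met. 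I would then take $B=F_1(X)$ with $\cZ$ the universal family of lines, check that the Abel--Jacobi map $b\mapsto[\cZ_b-\cZ_{b_0}]$ is an isomorphism onto $\IntJ(X_{\bC})$ over $\bC$, and that the Albanese is $\Jac(C)$ over $\bR$.

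Applying Proposition~\ref{prop:PHS} under the rationality assumption yields $[F_1(X)]=\eta_*[\CH^1(D_i)_{d}]$ for some essential center $D_i$ of the factorization $\bP^3\dashrightarrow X$ and some isomorphism $\eta:\Jac(D_i)\to\Jac(C)$. By the Torelli Theorem over nonclosed fields~\cite{SerreLauter}, the isomorphism of Jacobians forces $D_i\simeq C$. But then $[F_1(X)]$ is pushed forward from a genuine divisor class $\CH^1(C)_{d}$ on the curve $C$; since $C(\bR)\neq\emptyset$, such a class is represented by a real divisor, so $F_1(X)(\bR)\neq\emptyset$ — i.e.\ $X$ contains a real line. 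This contradicts the hypothesis, completing the proof.

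The main obstacle I expect is the careful bookkeeping needed to verify the hypotheses of Proposition~\ref{prop:PHS} and to track the twist of the principal homogeneous space through the factorization. Specifically, the delicate point is ensuring that the isomorphism $\eta$ supplied by the proposition is compatible with the principal polarization (so that Torelli applies to give $D_i\simeq C$ rather than merely an isogeny or a Jacobian of some other curve), and that the pushforward ${\eta}_*[\CH^1(C)_d]$ really does inherit a real point from $C(\bR)\neq\emptyset$. Everything else — the signature count producing a real conic, the $2$-torsion vanishing of $[\Pic^1(C)]$, and the simplicity of $\Jac(C)$ — is comparatively routine once the framework is in place.
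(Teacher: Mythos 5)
Your proposal is correct and follows essentially the same route as the paper's own proof: the signature-$(3,3)$ member of the pencil producing a real conic and hence $C(\bR)\neq\emptyset$, Wang's identification of $F_1(X)$ as a principal homogeneous space over $\Jac(C)$ with $2[F_1(X)]=[\Pic^1(C)]=0$, the simplicity of $\Jac(C)$ as a principally polarized abelian surface to satisfy the hypotheses of Proposition~\ref{prop:PHS}, and the Torelli theorem forcing $D_i\simeq C$, contradicting $F_1(X)(\bR)=\emptyset$ while $C(\bR)\neq\emptyset$. The points you flag as delicate (polarization-compatibility of $\eta$ and real points of $\CH^1(C)_d$) are handled in the paper exactly as you anticipate.
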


\begin{rema}
Colliot-Th\'el\`ene points out that this fails for 
complete intersections of two quadrics $X\subset \bP^4$ over $\bR$. 
For instance, let $X$ be the blowup of a quadric surface that
contains no real lines along two pairs of complex conjugate points. 
\end{rema}


\appendix
\selectlanguage{french}
\section*{Appendice, par J.-L. Colliot-Th\'el\`ene}
\def\thesection{A}

Dans toute cette note, $k$ d\'esigne un corps de caract\'eristique diff\'erente de 2.

\subsection{Quadriques avec une sous-vari\'et\'e de degr\'e impair}

 On a le lemme bien connu  de T. A. Springer (voir \cite[Chap. VII, Thm. 2.3]{lam}) :

\begin{CTlem}\label{springer}  
Si une forme quadratique sur un corps $k$  poss\`ede un z\'ero non trivial sur une
extension impaire du corps de base, alors elle  poss\`ede  un z\'ero non trivial sur $k$. 
\end{CTlem}

La proposition suivante est aussi bien connue \cite[Prop. 68.1]{EKM}.

\begin{CTprop}\label{split} Soit $k$ un corps.
Soit $Q \subset \P^n$, $n \geq 2$,  une quadrique lisse
d\'eploy\'ee, c'est-\`a-dire d\'efinie par une forme quadratique
d\'eploy\'ee.

(i)  La dimension maximale d'un sous-espace 
lin\'eaire de $Q$ est   $[(n-1)/2]$.

(ii) Consid\'erons l'application image directe $CH_{r}(Q) \to CH_{r}(\P^n)=\Z$ 
entre les groupes de Chow de cycles de dimension $r$, avec $0 \leq r \leq n-1$.
Pour $r\leq [(n-1)/2]$, cette application est surjective. Pour $r>[(n-1)/2]$, son image est $2.\Z$:
toute sous-vari\'et\'e int\`egre de $Q$ de dimension $r>[(n-1)/2]$ est de degr\'e pair. 
\end{CTprop}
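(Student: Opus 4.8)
The plan is to reduce everything to the theory of quadratic forms and to the known cellular structure of a split quadric. Write $q$ for the split form defining $Q$, a nondegenerate quadratic form of rank $n+1$, whose Witt index therefore equals $[(n+1)/2]$. A linear subspace $\P^r \subset Q$ is the projectivization of a totally isotropic subspace of dimension $r+1$, so by Witt's theorem the largest such $r$ is $[(n+1)/2]-1 = [(n-1)/2]$; this is assertion (i). Set $d=\dim Q = n-1$ and $\mu = [(n-1)/2] = [d/2]$, the maximal dimension of a linear subspace contained in $Q$.

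For (ii), surjectivity in the range $r\le \mu$ is immediate from (i): a linear subspace $\P^r\subset Q$ satisfies $i_*[\P^r]=[\P^r]$, the generator of $CH_r(\P^n)=\Z$. For the range $r>\mu$ I must establish two things, that $2\Z$ lies in the image and that the image is contained in $2\Z$. The first is easy: intersecting $Q$ with a general linear subspace $\Lambda\cong\P^{r+1}$ yields a smooth, hence integral, sub-quadric of dimension $r$ whose class in $CH_r(\P^n)$ is $2[\P^r]$.

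The heart of the matter is the inclusion $\subseteq 2\Z$, and here I would invoke the cellular structure of a split quadric. A complete isotropic flag for $q$ produces a Schubert/Bialynicki--Birula decomposition of $Q$ into affine cells, exactly one in each dimension $0,\dots,d$ apart from the middle dimension $d/2$ (present only when $d$ is even), which carries two cells coming from the two rulings. Consequently $CH_*(Q)$ is free on the classes of the cell closures, and these closures are explicit: in dimensions $r\le\mu$ they are the linear subspaces contained in $Q$ (degree $1$ in $\P^n$), while in dimensions $r>\mu$ they are the linear-space sections $Q\cap\P^{r+1}$ (degree $2$). Thus for $r>\mu$ the group $CH_r(Q)$ is generated by a class whose image in $CH_r(\P^n)$ is $2[\P^r]$, so the image is exactly $2\Z$; equivalently, any integral subvariety of dimension $r>\mu$, being a $\Z$-multiple of this generator, has even degree. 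This is precisely the Chow-group computation of \cite[Prop.~68.1]{EKM}.

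The step I expect to be the real obstacle is the identification of the cell closures above the middle dimension as degree-two sub-quadrics rather than degree-one linear spaces, equivalently the assertion that the restriction $CH^{c}(\P^n)\to CH^{c}(Q)$ is surjective (indeed an isomorphism) for $c=d-r<d/2$. This is exactly where splitness is used, and it can alternatively be proved inductively, peeling off a hyperplane section $Q\cap H$ (again a split quadric, of dimension $d-1$) and running the localization sequence together with the vanishing of the lower Chow groups of the complementary affine quadric. The even-dimensional middle requires a small separate check: although $CH_{d/2}(Q)$ has rank two, both generators are linear subspaces of degree one, so surjectivity persists at $r=\mu$ and the factor of two appears only for $r>\mu$, consistent with the stated ranges.
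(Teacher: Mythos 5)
Your proposal is correct, and it is essentially the same argument as the paper's: the paper proves this proposition simply by citing \cite[Prop.~68.1]{EKM}, whose proof is precisely the computation you outline --- Witt theory for (i), and the cellular/filtration description of $CH_*(Q)$ for a split quadric (linear-space generators of degree one up to the middle dimension, degree-two linear-section generators above it, with the rank-two middle group in even dimension) for (ii). Your handling of the two rulings at $r=[(n-1)/2]$ and of the identification of cell closures above the middle dimension matches the standard treatment, so there is nothing to add.
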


Il serait surprenant que l'\'enonc\'e suivant n'ait pas \'et\'e d\'ej\`a \'etabli.

\begin{CTtheo}\label{quadriques}
Soit $k$ un corps. Soit $Q \subset \P^n_{k}$, $n \geq 1$, une quadrique lisse.
S'il existe une sous-$k$-vari\'et\'e g\'eom\'etriquement
 int\`egre $W \subset Q \subset \P^n_{k}$ 
 de dimension $r$ et de degr\'e impair dans $\P^n_{k}$, alors $Q$ contient un espace lin\'eaire  $\P^r_{k}$.
\end{CTtheo}
\begin{proof}
Supposons la quadrique donn\'ee par l'annulation d'une forme quadratique $q(x_{0}, \dots, x_{n})$.
Si la forme $q$ est de rang pair et   hyperbolique, alors $n+1=2d$ et $q=0$ contient un espace lin\'eaire
$\P^d_{k}$.
Si $q$ est de rang $n+1$, avec  $n=2d$ et s'\'ecrit comme somme orthogonale d'une forme quadratique hyperbolique de rang $2d$ et d'une forme de rang 1, alors $q=0$ contient un espace lin\'eaire $\P^{d-1}_{k}$.
Dans ces deux cas, d'apr\`es la proposition \ref{split}, la d\'emonstration est achev\'ee.  

Toute $k$-vari\'et\'e  $W \subset  \P^n_{k}$ de degr\'e impair contient des points ferm\'es $P$ de degr\'e
$[k(P):k]$ impair. 
Si de plus   $W$ est g\'eom\'etriquement int\`egre, alors ses points ferm\'es de degr\'e impair sont denses pour la topologie de Zariski de $W$. 
D'apr\`es le lemme \ref{springer},
l'hypoth\`ese exclut donc que la forme quadratique $q$ soit anisotrope.

On peut donc supposer que $q(x_{0}, \dots, x_{n})$ 
s'\'ecrit sous la forme
$$q(x_{0}, \dots, x_{n}) = x_{0}x_{1}+ \dots + x_{2s}x_{2s+1} + g(x_{2s+2}, \dots, x_{n})$$
 avec $s \geq 0$ et
avec $g$ une forme quadratique anisotrope en au moins 2 variables.
 Consid\'erons l'application rationnelle de $\P^n_{k}$ vers $\P^{n -2s-2}$
 envoyant $(x_{0}, \dots,x_{n})$ sur $(x_{2s+2}, \dots, x_{n})$.
 Cette application est d\'efinie hors du ferm\'e d\'efini par
 $$(x_{sr+2}, \dots, x_{n})=(0,\dots, 0).$$
 Sa restriction \`a $W \subset Q $ est donc d\'efinie
 hors du ferm\'e $F \subset W$  d\'efini par ces m\^emes \'equations, donc en dehors du ferm\'e de $W$
 d\'efini par $$(x_{2s+2}, \dots, x_{n})=(0,\dots, 0)$$ et $$ x_{0}x_{1}+ \dots + x_{2s}x_{2s+1}=0.$$
 
 Si $F \neq W$, on a alors une
  application rationnelle de $W$ dans la quadrique anisotrope
 de $\P^{n -2s-2}$ d\'efinie par $ g(x_{2s+2}, \dots, x_{n})=0$.
 Comme les points ferm\'es de degr\'e impair sont Zariski denses dans $W$, et
 que la quadrique anisotrope ci-dessus ne poss\`ede pas de point ferm\'e de degr\'e impair
 par le lemme \ref{springer}, ceci est impossible.
 
On a donc $F =W$. La vari\'et\'e $W$ de dimension $r$ est contenue
 dans le ferm\'e de $\P^n_{k}$  d\'efini   par $(x_{2s+2}, \dots, x_{n})=(0,\dots, 0)$,
 qui est un sous-espace projectif $\P^{2s+1}_{k}$,  et dans la quadrique de cet espace projectif d\'efinie par
$x_{0}x_{1}+ \dots + x_{2s}x_{2s+1}=0$.  Comme $W$ est de degr\'e impair, 
d'apr\`es la proposition \ref{split},
ceci force $r\leq s$.
Ainsi $q=0$ contient un espace lin\'eaire $\P^r_{k}$.
\end{proof}

\subsection{Intersection de deux quadriques avec une sous-vari\'et\'e de degr\'e impair}

Le th\'eor\`eme suivant est d\^{u} \`a M. Amer \cite{amer}.
Le cas $d=1$ fut \'etabli  ind\'ependamment par A. Brumer.
 Le th\'eor\`eme g\'en\'eral est  \'etabli de nouveau, en toute caract\'eristique,
dans un tapuscrit de D. Leep \cite{leep}.

\begin{CTtheo}\label{theoamer}
Soient $f$ et $g$ deux formes quadratiques en $n+1$ variables
sur le corps $k$.  La forme quadratique $f+tg$ s'annule sur un sous-espace
lin\'eaire de  dimension $d$ de $k(t)^{n+1}$ si et seulement si 
$f=g=0$ s'annule sur un espace lin\'eaire de dimension $d$ de $k^{n+1}$. 
\end{CTtheo}

Le cas $n=5,  r=1$ du th\'eor\`eme suivant est \'etabli, par une autre m\'ethode, 
dans \cite[Thm. 14]{HT}.

\begin{CTtheo}\label{deuxquadriques}
Soit $X \subset \P^n_{k}$, $n \geq 3$, une intersection compl\`ete  lisse de deux quadriques. 

(i) S'il existe une sous-$k$-vari\'et\'e g\'eom\'etriquement int\`egre $W \subset X \subset \P^n_{k}$ 
 de dimension $r$ et de degr\'e impair dans $\P^n_{k}$, alors $X$ contient un espace lin\'eaire  $\P^r_{k}$.

(ii) Si de plus $r \geq 1$, alors $X$ est  
  $k$-birationnelle \`a $ \P^{n-2}_{k}$.
   \end{CTtheo}
\begin{proof} 
   Soit $X \subset \P^n_{k}$ d\'efinie par l'annulation  de deux  formes quadratiques $f=g=0$.  
  La quadrique lisse $Q$ sur le corps $K=k(t)$ d\'efinie par $f+tg=0$
  contient la sous-$K$-vari\'et\'e  g\'eom\'etriquement int\`egre $W\times_{k}K$,
  qui est de degr\'e impair et de dimension $r$. D'apr\`es le th\'eor\`eme \ref{quadriques}, elle contient un
  espace lin\'eaire $\P^r_{K}$. D'apr\`es le th\'eor\`eme \ref{theoamer}, la $K$-vari\'et\'e $X$
  contient un espace lin\'eaire $\P^r_{k}$. Ceci \'etablit (i), et (ii) en r\'esulte d'apr\`es
 \cite[Prop. 2.2]{CTSSDI}.
\end{proof}

  \subsection{Intersection de deux quadriques qui contiennent une paire rationnelle de  droites gauches}

On r\'epond ici n\'egativement \`a la question du \S  7 de \cite{HT},
et on donne simultan\'ement des exemples un peu plus simples que ceux du Corollaire 27
du paragraphe 9 de \cite{HT}.

Commen\c cons par des exemples sur le corps des r\'eels, variantes
de \cite[\S 2, p. 128]{CTS} et \cite[\S 1, Prop. 1.3]{CTm}.

\begin{CTprop}\label{reel}
Soient $n \geq 5$ un entier et  $X \subset \P^n_{\R}$, 
 une intersection compl\`ete lisse de deux quadriques donn\'ee par
un syst\`eme d'\'equations homog\`enes :
$$ f(x_{2}, \dots, x_{n}) - x_{0}x_{1}= 0 = g(x_{2}, \dots, x_{n})  -(x_{0}-x_{1})(x_{0}-2x_{1}),$$
avec $f(x_{2}, \dots, x_{n})$ et $g(x_{2}, \dots, x_{n})$ deux formes quadratiques \`a coefficients r\'eels d\'efinies positives.

Alors :

(i) L'espace topologique $X(\R)$ a deux composantes connexes.

(ii) La $\R$-vari\'et\'e $X$ n'est pas stablement rationnelle, ni m\^eme r\'etrac\-tilement rationnelle.

(iii) La $\C$-vari\'et\'e $X_{\C}$ contient un espace lin\'eaire $\P^{m}_{\C}$ avec $m=[(n-3)/2]$
qui ne rencontre pas son congugu\'e complexe.
\end{CTprop}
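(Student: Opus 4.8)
The plan is to treat the three parts in order, the last being the substantial one. For (i) I would project $X(\R)$ to $\P^1(\R)$ by $[x_0:\dots:x_n]\mapsto[x_0:x_1]$. Positive-definiteness of $f,g$ forces $x_0x_1=f>0$ and $(x_0-x_1)(x_0-2x_1)=g>0$ at every real point (equality would give $f=0$, hence $y:=(x_2,\dots,x_n)=0$, impossible), so writing $t=x_1/x_0$ the image lands in the two disjoint open arcs $(0,\tfrac12)\cup(1,\infty)$; this already exhibits $X(\R)$ as a disjoint union of two closed-and-open pieces. To identify them I would normalize $x_0=1$ and substitute $x_1=f(y)$, so that $X(\R)\cong\{y\in\R^{n-1}:g(y)=(1-f(y))(1-2f(y))\}$. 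Passing to $y=r\omega$ with $f(\omega)=1$ and $u=r^2$, the equation becomes $2u^2-(3+g(\omega))u+1=0$, whose roots obey $0<u_-<\tfrac12<1<u_+$ for every $\omega$ (test the quadratic at $u=0,\tfrac12,1$). Thus each $f$-unit direction $\omega$, ranging over the ellipsoid $\{f=1\}\cong S^{n-2}$, meets each piece exactly once, giving $X(\R)\cong S^{n-2}\sqcup S^{n-2}$.

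For (ii) I would invoke that, for a smooth projective geometrically connected $\R$-variety with a real point, the number of connected components of the real locus is an invariant of retract rationality, being read off from unramified mod-$2$ cohomology, which is trivial for $\P^{n-2}$ (cf.\ \cite{CTS}, \cite{CTm}). As $X(\R)$ has two components by (i), $X$ cannot be retract rational, hence not stably rational.

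The substantial part is (iii). The key reduction is that a linear subspace $\Lambda=\P(V)\subset X_\C$ satisfies $\Lambda\cap\bar\Lambda=\varnothing$ if and only if $V\cap\bar V=0$, i.e.\ if and only if $\Lambda$ has no real points. I would then force the absence of real points by pinning the ratio $x_1/x_0$ to a non-real value: fix $\tau\in\C\setminus\R$, put $\beta=(1-\tau)(1-2\tau)$, and seek $\Lambda=\{[\ell(y):\tau\ell(y):y]:y\in W\}$ with $W\subset\C^{n-1}$ and $\ell$ linear, so that membership in $X$ reduces to $\tau\ell^2=f|_W$ and $(\beta f-\tau g)|_W=0$. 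Take $W=W''\oplus\langle v\rangle$, where $W''$ is a common isotropic subspace of $f$ and $g$ of dimension $m=\lfloor(n-3)/2\rfloor$; such a $W''$ exists because $X$ smooth makes $\{f=g=0\}\subset\P^{n-2}$ a smooth complete intersection containing a $\P^{m-1}$ (Section~\ref{sect:background}, as $n-2\ge 2m$). Choose $v$ to be $f$-orthogonal to $W''$, orthogonal to $W''$ for $\beta f-\tau g$, and isotropic for $\beta f-\tau g$. Then $f|_W$ is a nonzero square (so a linear $\ell$ with $\tau\ell^2=f|_W$ exists) and $(\beta f-\tau g)|_W=0$, whence $\Lambda\cong\P^m$ lies on $X$; a dimension count ($n-1-2m\ge2$) furnishes such a $v$ with $f(v)\ne0$, $v\notin W''$, while $\beta f-\tau g$ is nondegenerate since $\beta/\tau\notin\R$. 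Finally $\Lambda$ has no real points: a real point would give $x_1/x_0=\tau\in\R$ unless $\ell(y)=0$, and $\ell(y)=0$ forces $f(y)=0$, impossible for a nonzero real $y$ with $f$ positive definite.

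The main obstacle is precisely (iii): once one notices that $\Lambda\cap\bar\Lambda=\varnothing$ is equivalent to the absence of real points, and that fixing a non-real ratio $[x_0:x_1]$ transports the whole problem into the positive-definite $y$-coordinates, the construction is essentially forced. The delicate points are verifying that the auxiliary vector $v$ can be chosen with all the stated (non)degeneracy conditions and that $\beta f-\tau g$ stays nondegenerate for a generic non-real $\tau$.
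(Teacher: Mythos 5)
Parts (i) and (ii) of your proposal are sound, and in (i) you prove more than the paper does: the paper only exhibits the continuous map $X(\R)\to\P^1(\R)$ given by $(x_0,x_1)$, observes its image is a union of two disjoint intervals, and quotes the fact that two components is the maximum for a smooth intersection of two quadrics, whereas your explicit solution of $2u^2-(3+g(\omega))u+1=0$ over the ellipsoid $\{f=1\}$ identifies $X(\R)$ as $S^{n-2}\sqcup S^{n-2}$ outright. Part (ii) matches the paper, which likewise refers to the literature for the invariance of connectedness of the real locus under retract rationality. Your reduction in (iii), namely $\Lambda\cap\bar\Lambda=\emptyset$ if and only if $\Lambda(\R)=\emptyset$, is also exactly the step implicit in the paper.

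However, your construction in (iii) has a genuine gap, precisely at the point you flag as delicate: the claimed dimension count does not furnish $v$. Your conditions place $v$ in $U:=W''^{\perp_f}\cap W''^{\perp_{\beta f-\tau g}}$, which for $\tau\neq 0$ equals $W''^{\perp_f}\cap W''^{\perp_g}$ and always contains $W''$ itself (on which $f\equiv 0$). Diagonalize simultaneously: $f=\sum z_i^2$, $g=\sum \mu_i z_i^2$ with the $\mu_i>0$ pairwise distinct. For $n=7$, $\mu=(1,\dots,6)$, take $W''=\langle (1,i\sqrt{2},1,0,0,0),\,(0,0,0,1,i\sqrt{2},1)\rangle$: the four functionals $B_f(w_j,\cdot),B_g(w_j,\cdot)$ are independent, so $\dim U=2$ and $U=W''$ — every admissible $v$ satisfies $f(v)=0$ and the construction yields nothing. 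The inequality $n-1-2m\ge 2$ is no help, because the solution space it counts is (generically, for odd $n\ge 7$) exhausted by $W''$. Even for $n=5$, writing $U=\langle w\rangle\oplus\langle u\rangle$, the orthogonalities give $f(aw+bu)=b^2f(u)$ and $(\beta f-\tau g)(aw+bu)=b^2(\beta f-\tau g)(u)$, so $q(v)=0$ with $f(v)\neq 0$ forces $\beta/\tau=g(u)/f(u)$ — a compatibility between $\tau$ and an invariant of the chosen $W''$ which you never verify can be met with $\tau\notin\R$ (it fails, for instance, whenever $g(u)/f(u)$ is real and outside $(-3-2\sqrt{2},-3+2\sqrt{2})$, forcing one to vary $W''$, which you do not analyze). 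The paper's proof avoids all of this: the hyperplane section $Y=X\cap\{x_0+x_1=0\}$ satisfies $Y(\R)=\emptyset$ (on it $f(y)=-x_0^2$ with $f$ positive definite), and Amer's theorem (Th\'eor\`eme \ref{theoamer}) combined with Tsen's theorem shows that \emph{any} intersection of two quadrics in $\P^{n-1}_{\C}$ contains a $\P^m$ with $m=[(n-3)/2]$; since $\Lambda\cap\bar\Lambda$ is a real linear space inside $Y$, it must be empty. If you wish to rescue your explicit approach, note that for $n$ even a $\P^m$ already lies in the base locus $\{f=g=0\}\subset\{x_0=x_1=0\}$ (as $[(n-4)/2]=m$ there) and is automatically real-point-free; it is exactly the odd case, where the extra direction must be borrowed from the $(x_0,x_1)$-plane, that your argument as written breaks down.
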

\begin{proof}
Il n'y a pas de point de $X(\R)$ avec $(x_{0},x_{1})=(0,0)$. On dispose donc de l'application continue
$X(\R) \to \P^1({\R})$ d\'efinie par $(x_{0},x_{1})$. Son image est la r\'eunion des intervalles $[0,1]$ et $[2,\infty]$.
L'espace $X(\R)$ a donc au moins deux composantes connexes, et c'est le maximum possible pour
une intersection lisse de deux quadriques. Pour les cons\'equences de la non connexit\'e de $X(\R)$
sur la non rationalit\'e d'une $\R$-vari\'et\'e projective et lisse $X/\R$, je renvoie aux r\'ef\'erences
donn\'ees dans \cite[Th\'eor\`eme 1.1]{CTm}.
Pour l'\'enonc\'e (iii), il suffit d'observer que la section $Y$ de $X$ par $x_{0}+x_{1}=0$
est une intersection   de deux quadriques dans $ \P^{n-1}_{\R}$ qui satisfait $Y(\R)=\emptyset$.
On sait que toute intersection de deux quadriques dans $ \P^{n-1}_{\C}$ contient un espace lin\'eaire
de dimension $m=[(n-3)/2]$; ceci r\'esulte par exemple  de la combinaison du th\'eor\`eme \ref{theoamer} et du  th\'eor\`eme de Tsen.
Comme on a $Y(\R)=\emptyset$, un tel sous-espace lin\'eaire de $Y_{\C}$ ne saurait rencontrer son conjugu\'e dans $Y_{\C}$.
\end{proof}

La proposition suivante utilise la m\'ethode de sp\'ecialisation sur une vari\'et\'e
pas trop singuli\`ere poss\'edant des invariants non ramifi\'es non triviaux \cite{ct-pirutka}.

\begin{CTprop} Soit $p\neq 2$ un nombre premier.
 Soit $\F$ un corps fini de caract\'eristique $p$ assez gros.
Sur tout corps $K$ avec $\F(x) \subset K\subset \F((x))$,   sur tout
corps $K$ avec $\C(x)(y) \subset K \subset \C((x))((y))$, 
sur tout corps de nombres $K$, et sur tout corps
$p$-adique $K$,   il existe
une intersection lisse de deux quadriques $X \subset \P^5_{K}$
qui contient un $K$-point, qui poss\`ede une paire de droites 
gauches d\'efinies sur une extension quadratique de $K$,
et qui n'est pas r\'etractilement rationnelle, et en particulier n'est
pas stablement $K$-rationnelle.
\end{CTprop}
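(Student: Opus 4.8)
The plan is to split the two assertions: the pair of skew conjugate lines together with a $K$-point is produced by an explicit construction modelled on Proposition~\ref{reel}, while the failure of retract rationality comes from the specialization method of \cite{ct-pirutka}, exactly as in \cite{HPT16} and in the proof of Theorem~\ref{theo:main}. For the geometry I would take equations of the shape $f(x_2,\dots,x_5)-x_0x_1=0$ and $g(x_2,\dots,x_5)-h(x_0,x_1)=0$, with $h$ a split binary quadratic form and $f,g$ quaternary forms chosen so that $X(K)\neq\emptyset$ (impose passage through a prescribed $K$-point) while the hyperplane section $Y=X\cap\{x_0+x_1=0\}$ is a smooth degree-four del Pezzo surface with $Y(K)=\emptyset$. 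Each field in question is $C_2$ (the two function-field chains) or carries a place with anisotropic behaviour (the number-field and $p$-adic cases), so such pointless $Y$ exist. Since a degree-four del Pezzo surface has sixteen lines over $\bar K$ and $Y(K)=\emptyset$, these fall into Galois orbits of even size, and a line $\ell$ defined over a quadratic extension $L/K$ cannot meet its conjugate $\ell'$ (their intersection point would be $K$-rational). Thus $\{\ell,\ell'\}\subset Y\subset X$ is the required skew pair; note that by Theorems~\ref{quadriques}, \ref{theoamer} and \ref{deuxquadriques} such an $X$ must carry no odd-degree geometrically integral curve, consistent with the absence of a $K$-line.

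For non-rationality I would define $X$ over the smallest field of each chain and exploit that a smooth $X$ is geometrically rational, so the obstruction is arithmetic and must be rendered geometric by raising the dimension. Double projection from the $K$-point (Construction~\ref{cons:point}) realizes $X$ as a quadric surface bundle over $\bP^1$; spreading out over the transcendental parameters of $K$ presents this as a quadric surface bundle $\cX$ over a product of projective lines, of dimension four (the $\bF(x)$ case) or five (the $\bC(x)(y)$ case), over a geometric ground field $k_0\in\{\bar\bF_p,\bC\}$. I would then force the degeneracy divisor into the tangential configuration $2E\cup F_1\cup F_1'\cup F_2\cup F_2'$ of Section~\ref{sect:irr}, so that Pirutka's technique gives a nonzero class in $H^2_{\mathrm{nr}}(k_0(\cX),\bZ/2)$ while the singular total space, being \'etale-locally that of \cite{HPT16}, admits a universally $\CH_0$-trivial resolution. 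By \cite{voisin}, \cite{ct-pirutka} the generic member lacks an integral decomposition of the diagonal; since the base is rational, the threefold fibre $X$ is not retract rational over the corresponding function field, and as stable rationality implies retract rationality it is not stably rational either.

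It remains to cover every intermediate field $K$ and the arithmetic fields. For the two chains I would track the class itself: its residue is supported along the special locus of the $x$-adic (resp.\ $y$-adic) valuation, whose residue field is unchanged along $\bF(x)\subseteq K\subseteq\bF((x))$ (resp.\ along the $\bC$-chain), so it stays non-constant over each such $K$. A number field is reduced to a completion: a finite place yields a $p$-adic field, and a real place (when present) is already covered by Proposition~\ref{reel}. For the $p$-adic and finite-residue cases the input invariant is produced in characteristic $p$ by spreading the characteristic-zero \cite{HPT16} example over a ring of $S$-integers and reducing modulo a large prime, the nonvanishing of the unramified class and the $\CH_0$-triviality of the resolution persisting for $p\gg0$; for the genuinely arithmetic $p$-adic fields the relevant group is the higher unramified cohomology $H^3_{\mathrm{nr}}(K(X)/K,\bQ/\bZ(2))$ modulo constants, produced by the same tangential degeneration of the quadric bundle through the specialization method. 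This is where the hypotheses $p\neq2$ and $\bF$ sufficiently large enter.

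The main obstacle is precisely the realization of a nontrivial unramified invariant on a \emph{mildly} singular model over ground fields that are not algebraically closed of characteristic zero, reconciled with the geometric requirements. One must simultaneously (i) retain the prescribed $K$-point and the quadratic pair $\{\ell,\ell'\}$ while pushing the degeneracy divisor into the configuration of \cite{HPT16}; (ii) verify, over $\bar\bF_p$ and over function fields of complex curves (resp.\ in the $\bQ/\bZ(2)$-coefficient arithmetic setting over $p$-adic fields), that the configuration still supports a nonzero unramified class with a universally $\CH_0$-trivial resolution; and (iii) control the reduction modulo $p\gg0$ of the characteristic-zero computation. Balancing the determinantal and bidegree constraints recorded in Section~\ref{sect:irr} against (i)--(iii) is the technical heart of the argument, and I expect (ii) in positive characteristic to be the principal difficulty.
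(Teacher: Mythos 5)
Your outline diverges substantially from the paper's proof, and at two places the divergence opens genuine gaps rather than an alternative route. First, the geometric step is incomplete: from $Y(K)=\emptyset$ you correctly deduce that a line defined over a quadratic extension must be skew to its conjugate (their intersection point would be $K$-rational), but you never establish that the Galois action on the sixteen lines of the pointless quartic del Pezzo surface $Y$ actually has an orbit of size \emph{two}. ``Orbits of even size'' only excludes $K$-lines; orbits of size $4$, $8$ or $16$ are perfectly possible, so the required quadratic pair is not produced. (Your justification for the existence of pointless $Y$ is also backwards: being $C_2$ tends to force points rather than preclude them; the pointless examples exist for arithmetic reasons that have to be exhibited.) Second, and decisively, the irrationality argument is not carried out. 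The HPT16 mechanism you invoke applies to a \emph{very general} $(6,6)$ degeneracy curve, which is incompatible with simultaneously prescribing a $K$-point, a quadratic pair of skew lines, and the determinantal structure (a codimension-\emph{three} condition among $(6,6)$ forms, as Section~\ref{sect:irr} records); and you yourself flag the transfer of the unramified computation to $\bar{\F}_p$ and to the $p$-adic setting as ``the principal difficulty'' and leave it open, while the intermediate fields $\F(x)\subset K\subset \F((x))$ are handled only by a gesture at residues. A proof cannot defer its technical heart.

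The paper's actual proof avoids all of this by specializing, via the form of the specialization theorem in [ct-pirutka, Thm.~1.12], to one fixed \emph{explicit singular} complete intersection taken from the Colliot-Th\'el\`ene--Coray--Sansuc examples: $x^2-ay^2-uv=0$, $z^2-at^2-(u-cv)(u-dv)=0$ with $a$ a nonsquare. On this special fiber everything is already in place: the quaternion class $((u-cv)/v,a)$ is known to be unramified on smooth projective models and not constant ([CTCS, Prop.~6.1~(iii)]); the only singularities are two closed points whose resolution has fibers that are smooth quadric surfaces over $k(\sqrt{a})$ with a rational point, hence universally $\CH_0$-trivial; and the smooth $k$-point $M=(1,0,1,0,1,0)$ together with the conjugate skew lines $x\mp\alpha y=z\mp\alpha t=u=v=0$ lie on the special fiber itself. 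A pencil $q_i+\lambda f_i$ through these configurations then yields a smooth member $X_\lambda$ over $K=k(\lambda)$ retaining the point and the lines, and non-retract-rationality over every field between $k(\lambda)$ and $k((\lambda))$ follows from the specialization theorem; taking $k$ to be a large finite field, $\C(T)$ or $\C((T))$, and deforming in mixed characteristic for the $p$-adic and number-field cases, covers all the asserted ground fields uniformly. To salvage your outline you should replace the very-general HPT16 degeneration by a specialization to such a fixed singular model with a computed unramified Brauer class, and place the point and the lines on that special fiber rather than trying to impose them on a generic member afterwards.
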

\begin{proof}

On utilise  les exemples donn\'es avec Coray et Sansuc dans \cite{CTCS}; voir aussi 
la liste d'exemples  de  \cite[\S 15]{CTSSDII}.

Soit $k$ un corps de caract\'eristique diff\'erente de 2
et $a \in k$ non carr\'e. Soient $(x,y,z,t,u,v)$
des coordonn\'ees homog\`enes de $\P^5_{k}$.

Soit $\alpha=\sqrt{a}$.
Soit $X \subset \P^5_{k}$
 d\'efinie par le syst\`eme
$$q_{1} = x^2- a y^2 -  uv = 0$$
$$q_{2} = z^2- a t^2 -   (u-cv)(u-dv)=0,$$
avec $uv(u-cv)(u-dv)$  sans facteur multiple.

Elle contient le point rationnel lisse $M$ de coordonn\'ees
$(1,0,1,0,1,0)$.

La classe de l'alg\`ebre de quaternions $ ((u-cv)/v, a) \in \Br(k(X))$
est non ramifi\'ee sut tout mod\`ele projectif et lisse de $X$. 
Comme $a$ n'est pas un carr\'e dans $k$, cette classe
ne provient pas de $\Br(k)$.
Ces deux \'enonc\'es sont   \'etablis dans \cite[Prop. 6.1 (iii)]{CTCS}.

Le lieu non lisse de $X$ est form\'e des deux points ferm\'es $R$ et $S$
d\'efinis l'un par
$u=v=0=z=t=0$ et donc $x^2-ay^2=0$, l'autre par
 $u=v=x=y=0$ et $z^2-at^2=0$.
 On dispose d'une r\'esolution des singularit\'es $f: \tilde{X} \to X$
qui est un isomorphisme au-dessus du compl\'ementaires de $R$ et $S$
et telle que les fibres $f^{-1}(R)$ et $f^{-1}(S)$ sont des quadriques lisses
de dimension 2
sur le corps $k(\sqrt{a})$ poss\'edant un $k(\sqrt{a})$-point : 
 les $k(\sqrt{a})$-vari\'et\'es $f^{-1}(R)$ et $f^{-1}(S)$ sont donc universellement $CH_{0}$-triviales.

La vari\'et\'e $X$ 
contient deux droites gauches conju\-gu\'ees
$$x-\alpha y= z-\alpha t=u=v=0$$
et 
$$x+\alpha y= z+\alpha t=u=v=0.$$

On d\'eforme maintenant $X$ en une intersection lisse
de deux quadriques
 contenant deux droites conju\-gu\'ees
et contenant le point $M$.  
Il suffit pour cela de  prendre une intersection lisse $f_{1}=f_{2}= 0$
de deux quadriques dans $\P^5_{k}$
contenant les deux droites gauches ci-dessus et contenant le point $M$
(voir \cite[ \S 4 et \S 1]{CTSSDI}; c'est ici que l'on suppose le corps fini assez gros).
On consid\`ere alors  l'intersection compl\`ete  lisse  de deux quadriques  $X_{\lambda}$
sur le corps $K=k(\lambda)$ donn\'ee par
$$ q_{1} + \lambda f_{1}=0,$$
$$q_{2} + \lambda f_{2} =0.$$

La $K$-vari\'et\'e $X_{\lambda}$ 
poss\`ede un point $K$-rationnel et contient deux droites
conjugu\'ees. 
Le th\'eor\`eme  de sp\'ecialisation sous la forme \cite[Thm. 1.12]{ct-pirutka}
montre que  $X_{\lambda}$ 
 n'est pas $K$-r\'etractilement rationnelle.

On peut prendre pour $k$ tout corps assez gros de caract\'eristique diff\'erente de 2
pour lequel $k\neq k^2$. Par exemple un corps fini  $\F$ de caract\'eristique diff\'erente
de 2 assez gros, ou $k=\C((T))$ ou $k=\C(T)$.

L'argument donne ainsi des exemples de $X \subset \P^5_{K}$   non r\'etractilement rationnels sur 
$K=\C((T_{1}))((T_{2}))$, sur $K=\C(T_{1},T_{2})$,  sur $K=\F((T))$, sur $K=\F(T)$.
On peut aussi faire une d\'eformation en in\'egale caract\'eristique et faire des exemples
sur tout corps $p$-adique  (de corps r\'esiduel assez gros et non dyadique), et de l\`a sur tout corps de nombres.
\end{proof}

On trouvera un exemple analogue pour les hypersurfaces cubiques de $\P^4_{\Q_{p}}$
dans \cite[Thm. 1.21]{ct-pirutka}.

\subsection{Intersection de deux quadriques non rationnelles en dimension quelconque sur des corps non ordonnables}

 On a d\'ej\`a donn\'e de tels exemples sur les r\'eels (Prop. \ref{reel}).
 On peut  donner  des exemples sur des corps non ordonnables.
 L'argument donn\'e ici a d\'ej\`a \'et\'e d\'evelopp\'e
 dans  \cite[Thm. 4.1]{CTm} pour les hypersurfaces cubiques diagonales.
 On renvoie \`a \cite{CTm} pour plus de d\'etails sur les outils employ\'es (cohomologie galoisienne,
 cohomologie non ramifi\'ee).

Soit $k$ un corps de caract\'eristique diff\'erente de 2, contenant $a\in k^{*}\setminus  k^{*2}$.
Soit $K_{n}=k(s_{1}, \dots,s_{n})$ le corps des fonctions rationnelles en $n \geq 0$ variables.
Soient   $b_{1}, \dots, b_{n} \in k$ et
$X_{n} \subset \P^{n+4}_{K_{n}}$ l'intersection compl\`ete  de deux quadriques donn\'ee par
le syst\`eme d'\'equations :
$$\phi=x^2-ay^2 -uv +  \sum_{i=1}^{n} s_{i} y_{i}^2=0$$
$$ \psi= 2(x^2-az^2) - (u+v)(2u-v) +   \sum_{i=1}^{n} b_{i }s_{i} y_{i}^2=0$$
en les variables homog\`enes $(x,y,z,u,v,y_{1}, \dots, y_{n})$.
 La $K_{n}$-vari\'et\'e $X_{n}$ poss\`ede le $K_{n}$-point  $(x,y,z,u,v,y_{1}, \dots, y_{n})=(1,0,0,1,1,0,\dots,0)$. On supposera $X$ lisse. C'est le cas si le polyn\^{o}me homog\`ene $det(\lambda\varphi+\mu \psi)$ est s\'eparable.
 Si $k$ est infini ou fini avec assez d'\'el\'ements, il existe des \'el\'ements
 $b_{1}, \dots, b_{n} \in k$ qui satisfont ces conditions.

\begin{CTprop}
Soit $K_{n}(X_{n})$ le corps des fonctions de la $K_{n}$-vari\'et\'e $X_{n}$.
Le cup-produit $$\alpha_{n}:=((u+v)/v, a, s_{1}, \dots, s_{n}) \in H^{n+2}(K_{n}(X_{n}),\Z/2)$$ des classes 
de $((u+v/v), a, s_{1}, \dots, s_{n}) $ dans  
$$K_{n}(X)^*/K_{n}(X_{n})^{*2}=H^1(K_{n}(X_{n}),\Z/2)$$
est  non  ramifi\'e sur la $K_{n}$-vari\'et\'e $X_{n}$,  et n'appartient pas \`a l'image de $H^{n+2}(K_{n},\Z/2)$. Ainsi la $K_{n}$-vari\'et\'e $X_{n}$
n'est pas $CH_{0}$-triviale et en particulier n'est pas r\'etrac\-tilement rationnelle.
\end{CTprop}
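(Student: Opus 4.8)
The plan is to run the specialization / unramified-cohomology method of \cite{ct-pirutka}, in the form already used for diagonal cubic hypersurfaces in \cite[Thm.~4.1]{CTm}. Three points must be checked: that $\alpha_n$ is unramified on $X_n$, that it does not come from $H^{n+2}(K_n,\bZ/2)$, and that together these facts obstruct universal $\CH_0$-triviality and hence retract rationality.

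\textbf{Unramifiedness.} Working on a smooth projective model and invoking purity, it suffices to compute the tame residue $\partial_D(\alpha_n)$ at every codimension-one point $D$. Since $a\in k^*$ and $s_1,\dots,s_n\in K_n^*$ are units on $X_n$, the only factor of the symbol that can contribute is the function $f=(u+v)/v$; thus $\partial_D(\alpha_n)$ can be nonzero only along the pole locus $\{v=0\}$ and the zero locus $\{u+v=0\}$ of $f$, where it equals the restriction of $(a,s_1,\dots,s_n)$ to the residue field $k(D)$ (weighted by the parity of $\mathrm{ord}_D f$). The heart of the matter is that these restricted symbols vanish. On $\{v=0\}$ the first quadric gives $ay^2=x^2+\sum_i s_i y_i^2$, so $a$ is a value of $\langle 1,s_1,\dots,s_n\rangle$ over $k(D)$; on $\{u+v=0\}$ the second quadric gives $2az^2=2x^2+\sum_i b_i s_i y_i^2$, so $a$ is a value of $\langle 2,b_1 s_1,\dots,b_n s_n\rangle$. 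In each case the roundness of Pfister forms promotes membership of $a$ in a value set to the hyperbolicity over $k(D)$ of the Pfister form attached to $(a,s_1,\dots,s_n)$, i.e.\ to the vanishing of the residue. Note that the quaternion class $((u+v)/v,a)$ alone is ramified once $n\ge 1$, its residue along $\{v=0\}$ being the class of the non-square $a$; it is exactly the cup product with the $s_i$ that restores unramifiedness, which is the mechanism making the construction work.

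\textbf{Non-triviality.} I would argue by descending induction on $n$ via iterated residues in the transcendental parameters. Endowing $K_n=k(s_1,\dots,s_n)$ with the divisorial valuation $v_{s_n}$ and extending it to $K_n(X_n)$, the residue $\partial_{s_n}$ carries $\alpha_n$ to the analogous class $\alpha_{n-1}$ on the corresponding variety over $k(s_1,\dots,s_{n-1})$; by naturality any class pulled back from $H^{n+2}(K_n)$ has residue pulled back from $H^{n+1}(k(s_1,\dots,s_{n-1}))$. Iterating reduces the claim to the base case $n=0$, where $\alpha_0=((u+v)/v,a)$ is a quaternion class. There, exactly as in the skew-line examples above (and as in \cite[Prop.~6.1]{CTCS}), $\alpha_0$ is a nonconstant element of $\Br(k(X_0))$ not coming from $\Br(k)$, since $a\notin k^{*2}$. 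As the iterated residue of $\alpha_n$ does not vanish while that of any class descending from $H^{n+2}(K_n)$ would, we conclude $\alpha_n\notin\mathrm{Image}\big(H^{n+2}(K_n,\bZ/2)\big)$.

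\textbf{Conclusion.} A smooth projective variety that is universally $\CH_0$-trivial satisfies $H^{i}_{\mathrm{nr}}(K_n(X_n),\bZ/2)=H^{i}(K_n,\bZ/2)$ for all $i$, by the decomposition of the diagonal; a nonconstant unramified class therefore obstructs universal $\CH_0$-triviality, and retract (a fortiori stable) rationality forces universal $\CH_0$-triviality. Hence $X_n$ is not retract rational. The main obstacle is the residue computation of the second paragraph: verifying, with the correct signs and using the genericity of the $b_i$, that $(a,s_1,\dots,s_n)$ restricts to zero on $k(D)$ at both $\{v=0\}$ and $\{u+v=0\}$, while the iterated residue down to the quaternion class survives. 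Roundness of Pfister forms is the tool that drives the vanishing.
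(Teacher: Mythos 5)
Your proposal is correct and follows essentially the same route as the paper's proof: unramifiedness is checked by noting that only the factor $(u+v)/v$ can ramify, computing the residues along $\{v=0\}$ and $\{u+v=0\}$, and killing the restricted symbol $(a,s_1,\dots,s_n)$ via the quadric identities and Pfister-form theory, while non-constancy is proved by induction on $n$ through the residue at $s_n=0$, reducing to the nonconstant quaternion class of the case $n=0$ and concluding via the decomposition of the diagonal. The one step the paper makes explicit that you gloss is how the valuation $v_{s_n}$ is extended to $K_n(X_n)$: one uses the proper model over $K_{n-1}[s_n]$, whose multiplicity-one special fiber is the \emph{cone} over $X_{n-1}$, so the residue of $\alpha_n$ lands in a purely transcendental extension of $K_{n-1}(X_{n-1})$ where $\alpha_{n-1}$ remains nonconstant -- and the sign bookkeeping in the residue computation, which you rightly flag as the point requiring care, matches the paper's own treatment.
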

\begin{proof}
Pour $n=0$, la classe de quaternions $((u+v)/v, a)$ est un \'el\'ement non constant de la 2-torsion du groupe
de Brauer de la surface $X_{0} \subset \P^4_{k}$, voir \cite[\S 4]{BSD}. Cela d\'efinit donc une classe dans $H^2_{nr}(k(X_{0})/k,\Z/2)$
qui ne vient pas de $H^2(k,\Z/2)$. On notera que cela exclut la pr\'esence d'un couple de droites gauches
conjugu\'ees sur $X_{0}$.

Soit $n \geq 1$. Supposons l'\'enonc\'e d\'emontr\'e pour $n-1$.  Sur la $K_{n}$-vari\'et\'e lisse $X_{n}$,  
la classe $\alpha_{n}$ est non ramifi\'ee en dehors des diviseurs 
d\'efinis par $v=0$ et par $u+v=0$. Le diviseur $\Delta$ d\'efini par $v=0$ est int\`egre, donn\'e par le syst\`eme
$$x^2-ay^2   -  \sum_{i=1}^{n} s_{i} y_{i}^2=0,$$
$$ 2(x^2-az^2) - 2u^2 -   \sum_{i=1}^{n} b_{i }s_{i} y_{i}^2=0.$$
Le r\'esidu de $\alpha_{n}$ en $\Delta$ est le cup-produit
$$\beta_{n}:= (a, s_{1}, \dots, s_{n}) \in H^{n+1}(K_{n}(\Delta),\Z/2).$$
L'identit\'e $x^2-ay^2   -  \sum_{i=1}^{n} s_{i} y_{i}^2=0$  sur $\Delta$ implique que $\beta_{n}$ est nul.s
L'argument sur le diviseur int\`egre d\'efini par $u+v=0$ est identique. La classe $\alpha_{n}$
est donc non ramifi\'ee sur la $K_{n}$-vari\'et\'e $X_{n}$.

On consid\`ere par ailleurs le mod\`ele propre sur $K_{n-1}[s_{n}]$ d\'efini par le m\^{e}me syst\`eme d'\'equations
que $X_{n}$. La fibre au-dessus de $s_{n}=0$ n'est autre que le c\^one sur la $K_{n-1}$-vari\'et\'e $X_{n-1}$
d\'efinie par le syst\`eme d'\'equations :
$$x^2-ay^2   -  \sum_{i=1}^{n-1} s_{i} y_{i}^2=0,$$
$$ 2(x^2-az^2) - 2u^2 -   \sum_{i=1}^{n-1} b_{i }s_{i} y_{i}^2=0.$$
Le r\'esidu de $\alpha_{n}$ au point g\'en\'erique de cette vari\'et\'e est
la classe 
$$\alpha_{n-1}=((u+v)/v, a, s_{1}, \dots, s_{n-1}),$$ 
qui par hypoth\`ese de r\'ecurrence n'est pas
dans l'image de $H^{n+1}(K_{n-1}, \Z/2)$. Comme la fibre $s_{n}=0$ a multiplicit\'e 1,
la comparaison des r\'esidus en $s_{n}=0$ montre que
 $\alpha_{n}$ n'est pas dans l'image de $H^{n+2}(K_{n}, \Z/2)$.
\end{proof}

L'argument ci-dessus peut s'adapter en in\'egale caract\'eristique
et donne  sur tout corps $p$-adique $K$   avec $p\neq 2$
des exemples d'intersection lisse de deux quadriques $X \subset \P^5_{K}$ 
non r\'etractilement rationnelle.

\selectlanguage{english}

\bibliographystyle{alpha}
\bibliography{2quadwCT}
\end{document}